\documentclass[a4paper, 12pt]{article}
\usepackage{graphics}
\usepackage{ifthen}
\usepackage{amsmath,amsfonts,amstext,amscd,bezier,amssymb,a4,enumerate,epsfig,psfrag,subfigure}
\usepackage[latin1]{inputenc}
\usepackage{graphicx}
\usepackage{amsthm,amsopn}
\usepackage{bm}
\usepackage{amssymb}
\usepackage[T1]{fontenc}
\usepackage{mathrsfs}

\usepackage{color}

\everymath={\displaystyle}

\newcommand{\Fb}{{\bf F}}
\newcommand{\Lb}{{\bf L}}
\newcommand{\Pb}{{\bf P}}
\newcommand{\fb}{{\bf f}}
\newcommand{\Gb}{{\bf G}}
\newcommand{\ab}{{\bf a}}
\newcommand{\bb}{{\bf b}}

\newcommand{\NN}{{\mathbb{N}}}

\newcommand{\RR}{{\mathbb{R}}}
\newcommand{\CC}{{\mathbb{C}}}



\newcommand{\BY}{{{\cal B}(Y)}}
\newcommand{\BH}{{{\cal B}(H)}}

\newcommand{\Hab}{{\mathscr P}^{b}}
\newcommand{\HbC}{{\mathscr P}^{b, p}}
\newcommand{\Haz}{{\mathscr P}^{0}}
\newcommand{\Haone}{{\mathscr P}^{b}}
\newcommand{\Habk}{{\mathscr P}^{b_k}}
\newcommand{\Habb}{{\mathscr P}^{{\bf b}}}

\newcommand{\Qab}{{\mathscr Q}^{ab}}

\newcommand{\Cr}{{\mathcal C}}

\newcommand{\Ftl}{{F^t}}

\newcommand{\Ftkr}{{F^{t_k}}}

\newcommand{\sconv}{{ \stackrel{{\scriptstyle s}}{\longrightarrow}\ }}
\newcommand{\wconv}{{ \stackrel{{\scriptstyle w}}{\longrightarrow}\ }}

\newcommand\parity{\operatorname{par}}
\newcommand\ind{\operatorname{ind}}

\newcommand\sgn{\operatorname{sgn}}


\newtheorem{lemma} {Lemma}
\newtheorem{prop} {Proposition}
\newtheorem{theo} {Theorem}

\newtheorem{cor} {Corollary}

\newtheorem*{theo*} {Theorem}

\renewcommand{\qed}{\hfill \mbox{\raggedright \rule{.1in}{.1in}}}

\newcommand{\Ran}{\operatorname{Ran}}
\newcommand{\Dom}{\operatorname{Dom}}
\newcommand{\Ker}{\operatorname{Ker}}

\newcommand{\trace}{\operatorname{tr}}

\title{Positive eigenvectors and simple nonlinear maps}

\author{Marta Calanchi and Carlos Tomei}

\date{}

\begin{document}
\maketitle

\begin{abstract}
For linear operators $L, T$ and  nonlinear  maps $P$, we describe classes of simple maps $F = I - P T$, $F = L - P$ between Banach and Hilbert spaces, for which no point has more than two preimages. The classes encompass
known examples (homeomorphisms, global folds) and the weaker, geometric, hypotheses suggest new ones. The operator
$L$ may be the Laplacian with various  boundary conditions, as in the original Ambrosetti-Prodi theorem, or the operators associated with the quantum harmonic oscillator, the hydrogen atom, a spectral fractional Laplacian, elliptic operators in non-divergent form. The maps $P$ include the Nemitskii map  $P(u) = f(u)$ but may be non-local, even non-variational. For self-adjoint operators $L$, we employ familiar results on the nondegeneracy of the ground state. On Banach spaces, we use a variation of the Krein-Rutman theorem.
\end{abstract}

\medbreak

{\noindent\bf Keywords:}  Ambrosetti-Prodi theorem, folds, Krein-Rutman theorem, positivity preserving semigroups.

\smallbreak

{\noindent\bf MSC-class:} 35J65, 47H11, 47H30, 58K05.

\bigskip

\section{Introduction and main results}\label{sec:intro}

The celebrated Ambrosetti-Prodi theorem provides a geometric description of a class of differential maps. We state it in the context of Sobolev spaces (\cite{A}, \cite{AP}, \cite{BP}). For a smooth, bounded domain $\Omega \subset \RR^n$, denote by $ \lambda_m <\ \mu_m $ the two smallest eigenvalues of the (Dirichlet) Laplacian
$ - \Delta_D: D = H^2(\Omega)\cap H^1_0(\Omega) \to L^2(\Omega) $ and by $\phi_m >0$ an eigenvector associated with $\lambda_m$.

\begin{theo} [Ambrosetti-Prodi] \label{AP} Let $\Fb: D \to L^2(\Omega), \Fb(u) = - \Delta_D u - \fb(u)$, where
$\fb: \RR \to \RR$ is a $C^2$ strictly convex  function  for which
\[ \ab = \inf \fb ' (x) <  \lambda_m < \lambda_m + \bb = \sup \fb '(x) < \mu_m  \ . \]
Then $\Fb$ folds downwards with respect to $\phi_m$.
\end{theo}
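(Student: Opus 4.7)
My plan is a global Lyapunov--Schmidt reduction along the positive eigenvector $\phi_m$, reducing the claim to a one-dimensional statement about a strictly concave, coercive function, with strict convexity of $\fb$ and positivity of the tangent field along each fiber doing the essential work.

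Decompose $L^2(\Omega) = \langle \phi_m\rangle \oplus V$, $V = \phi_m^\perp$, with $Q:L^2\to V$ the orthogonal projection. Since $-\Delta$ preserves the splitting, writing $u = t\phi_m + v$ and $g = r\phi_m + h$ with $v,h \in V$, the equation $\Fb(u) = g$ separates into the auxiliary equation
\[
-\Delta v - Q\fb(t\phi_m + v) = h \quad \text{in } V
\]
and the scalar equation $\lambda_m t - \langle \fb(t\phi_m + v), \phi_m\rangle = r$. The spectral gap $\sup \fb' = \lambda_m + \bb < \mu_m$ makes $-\Delta - Q\fb'(u)Q$ uniformly coercive on $V$ (lower spectral bound $\mu_m - \lambda_m - \bb > 0$), so a standard monotone-operator / implicit-function argument produces a unique $C^1$ solution $v = v(t,h)$ of the auxiliary equation for every $(t,h)$.

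Substituting yields the reduced equation $\psi(t;h) = r$ with $\psi(t;h) := \lambda_m t - \langle \fb(t\phi_m + v(t,h)), \phi_m\rangle$. Implicit differentiation of the auxiliary equation, together with self-adjointness of $-\Delta$ on $V$, produces the clean identities
\[
(-\Delta - \fb'(u))\dot u = (\partial_t\psi)\,\phi_m, \qquad \partial_{tt}\psi(t;h) = -\int_\Omega \fb''(u)\,\dot u^3\, dx,
\]
where $\dot u := \phi_m + \partial_t v$. At a critical point of $\psi(\cdot;h)$, the first identity forces $\dot u$ into the kernel of the Schr\"odinger-type operator $-\Delta - \fb'(u)$; the paper's Krein--Rutman / positive-ground-state machinery gives this kernel as one-dimensional and spanned by a strictly positive function, and since $\langle \dot u, \phi_m\rangle = 1 > 0$ we conclude $\dot u > 0$ at every critical point. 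A continuity and maximum-principle argument then propagates $\dot u > 0$ globally along each $t$-fiber, so $\fb'' > 0$ yields $\partial_{tt}\psi < 0$.

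Coercivity $\psi(t;h)\to -\infty$ as $|t|\to\infty$ comes from the asymptotics of $\fb$: strict convexity with $\sup\fb' = \lambda_m + \bb$ and $\inf\fb' = \ab$ gives $\fb(x)/x \to \lambda_m + \bb$ at $+\infty$ and $\to \ab$ at $-\infty$, whence $\psi(t;h)/t \to -\bb < 0$ as $t\to +\infty$ and $\psi(t;h)/t \to \lambda_m - \ab > 0$ as $t\to -\infty$. Strict concavity plus coercivity produce a unique maximizer $t^*(h)$, and $\psi(t;h) = r$ has exactly two, one, or zero solutions according to whether $r < \psi(t^*(h);h)$, $r = \psi(t^*(h);h)$, or $r > \psi(t^*(h);h)$---this is the downward fold of $\Fb$ with respect to $\phi_m$. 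The main obstacle is the global positivity of $\dot u$ throughout each fiber: it is local near the fold, handed over by the positive ground state of $-\Delta - \fb'(u)$, and its global propagation is exactly where the paper's positive-eigenvector framework is indispensable.
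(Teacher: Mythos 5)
Your reduction is the same Lyapunov--Schmidt / adapted-coordinates scheme the paper uses (its Proposition \ref{theo:like2} and the ensuing fiber picture), and the identities you write down are both correct: $(-\Delta - \fb'(u))\,\dot u = (\partial_t\psi)\,\phi_m$ holds for all $t$, and testing its $t$-derivative against $\dot u$ (using $\ddot u \in V=\phi_m^\perp$ to kill the $\partial_t\psi$ term) does give $\partial_{tt}\psi = -\int_\Omega \fb''(u)\,\dot u^3\,dx$ globally. The genuine gap is the sentence ``a continuity and maximum-principle argument then propagates $\dot u > 0$ globally along each $t$-fiber.'' You do not say which maximum principle, and it is far from obvious: away from critical points $\dot u = (-\Delta-\fb'(u))^{-1}(\psi'(t)\phi_m)$, and once the ground-state eigenvalue of $-\Delta-\fb'(u(t))$ crosses zero the resolvent is no longer positivity preserving, so positivity of $\dot u$ does not simply transport. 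Fortunately you do not need this claim. You already proved $\dot u > 0$ at every critical point of $\psi(\cdot;h)$ (kernel is one-dimensional, spanned by the positive ground state, and $\langle\dot u,\phi_m\rangle = 1>0$ fixes the sign), so $\partial_{tt}\psi<0$ at every critical point. A $C^2$ function on $\RR$ that tends to $-\infty$ at both ends and whose critical points are all nondegenerate local maxima has exactly one critical point, hence is strictly unimodal; that gives the downward fold. Replace the global-positivity step by this one-line real-variable fact and the proof closes.

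The route is noticeably different from the paper's. The paper does not differentiate the reduced height function twice. Instead it works with linearizations $G(u,v)$ (Newton-quotient multipliers), shows via the fine-perturbation theory of m-special operators that any three ordered preimages violate the convexity condition (m-Conv), so no point has three preimages (Proposition \ref{TR}); the direction of the fold and properness then come from the line-comparison asymptotics and degree-type arguments, not from $\partial_{tt}\psi$. The payoff of the paper's approach is that it only uses first-order difference quotients, so it survives with Lipschitz nonlinearities (Theorem \ref{theo:Nemitskii2}) and for non-local $P$; your approach is cleaner when $\fb$ is $C^2$ but tied to the Nemitskii pointwise structure and the second-derivative formula.
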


We define folds. Given real Banach spaces $X$ and $Y$ and $\phi \in X \cap Y, \phi \ne 0$, let $V = \langle \phi \rangle$ be the subspace generated by $\phi$. Denote  the {\it vertical} line through $y \in Y$ by $L_y =\{ y + h \phi, h \in \RR\} \subset Y$. A continuous function $F:X \to Y$ is {\it simple} if there is a subspace decomposition $X = W_X \oplus V$ with the following property: for each $y \in Y$, the inverse $F^{-1}(L_y)$ is a curve ( a fiber)
\[ \Lambda_y = \{u(y,t) = w(y,t) + t \phi, w(y,t) \in W_X, t \in \RR \} \]
such that, for $y + h(y,t)\phi = F(u(y,t))$, the height $h(y,.): \RR \to \RR$ is either strictly monotonical or strictly unimodal for each $y \in Y$. Thus, no point has three preimages under $F$. The definition clearly does not depend on the choice of $W_X$.

The function $F$ {\it folds downwards} with respect to $\phi$ if additionally, for all $y \in Y$, $h(y,t) \to -\infty$ as $t \to \pm\infty$. Then, there is a height $h(y)$ such that $F(u) = y + h \phi$ has zero, one or two preimages, depending if $h > h(y), h = h(y)$ or $ h < h(y)$. Similarly, $F$ {\it folds upwards} if the inequalities are reversed and  $F$ folds {\it folds vertically} if either case happens.

\medskip
Manes and Micheletti  stated the theorem with the hypotheses  above in the context of Hölder spaces (\cite{MM}). With different techniques, Berger and Podolak  provided a different proof (\cite{BP}, also \cite{BC}) for Hilbert spaces. The theorem was a starting point of an active line of research, leading to characterizations of folds and further examples.  The interested reader may find a rich  collection of examples and techniques in the review papers by Church and Timourian (\cite{ChT}, \cite{CT1}),
and  Ruf (\cite{R2}),  which cover extremely well the material up to the mid nineties. Their approach is strongly influenced by the original Ambrosetti-Prodi view of the problem:  a combination of local  theory (the fold being the first, simplest singularity) with global information related to the determination of the number of solutions in terms  of some parameter or forcing terms.
Such results are frequently restricted by technical requirements, such as smoothness: in this paper, we present operational alternatives and provide generalizations to a wider class of problems.

\medskip
Recently, Sirakov et alii. (\cite{STZ}) considered folds given by nonlinear perturbations of second order operators $\Lb$ in non-divergence form. For a bounded $C^{1,1}$ domain $\Omega \subset \RR^n$, $n \ge 2$,  $X = W^{2,n}(\Omega) \cap W^{1,n}_0(\Omega) $, $Y=W^{0,n}(\Omega) = L^n(\Omega)$, set
\[ \Lb u = - A_{ij} \ \partial_i \partial_j u - bB_i \ \partial_i u - q \ u = - \trace (A D^2 \ u)- B \  \nabla u - q\  u, \]
where, for appropriate constants $\Lambda \ge \lambda >0$,
the matrix $A = (A_{ij})$ satisfies $A(x) \in C(\overline{\Omega})$, $\sigma(A(x)) \subset [\lambda, \Lambda]$ and $|B|_\infty, |q|_\infty \le C$. Then $\Lb \in {\cal B}(X,Y)$, the space of bounded linear operators from $X$ to $Y$. Let $\lambda_m = \lambda_m(\Lb)$ be the eigenvalue of $\Lb$ of smallest real part and $\phi_m >0$ an associated eigenvector. 

\begin{theo} [Sirakov-Tomei-Zaccur] \label{theo:BNV}
Let $\Lb$ as above and $\Pb(u)= \fb(u)$ for a strictly convex function $\fb: \RR \to \RR$ such that
\[ \ab \  = \ \inf_{r \ne s} \frac{f(r) - f(s)}{r-s} \ < \  \lambda_m \ < \ \lambda_m + \bb \ = \ \sup_{r \ne s} \frac{f(r) - f(s)}{r-s} \ . \]
Then, for some $ \bb > 0$, $\Fb = \Lb - \Pb:X\to Y$ folds downwards with respect to $\phi_m$.
\end{theo}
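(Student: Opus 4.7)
The plan is to implement a Lyapunov--Schmidt reduction tailored to the non-self-adjoint operator $\Lb$, using Krein--Rutman positivity in place of a self-adjoint spectral decomposition. First I would extract the spectral data: since $\Lb$ and its formal adjoint $\Lb^*$ are second-order elliptic operators satisfying the strong maximum principle, a Krein--Rutman type theorem gives that $\lambda_m$ is a real, simple, isolated eigenvalue with strictly positive principal eigenfunctions $\phi_m$ for $\Lb$ and $\psi_m$ for $\Lb^*$, and I would normalize them by $\int_\Omega \psi_m \phi_m = 1$. Then I would set $V = \langle \phi_m\rangle$, $W_X = \{u \in X: \int_\Omega \psi_m u = 0\}$ and $W_Y = \{y \in Y: \int_\Omega \psi_m y = 0\}$, with projections $P$ onto $V$ and $Q = I - P$. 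The adjoint relation $\Lb^*\psi_m = \lambda_m\psi_m$ yields $\Lb(W_X) \subset W_Y$, so the decomposition is compatible with $\Lb$, and $\Lb|_{W_X}: W_X \to W_Y$ is an isomorphism.

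Next I would parametrize the fibers. For each $(\eta, t) \in W_Y \times \RR$, consider
\[
G_t(w) := Q\Fb(w + t\phi_m) = \Lb w - Q\fb(w + t\phi_m)\ ,
\]
and show that, for $\bb > 0$ sufficiently small, $G_t: W_X \to W_Y$ is a homeomorphism. The derivative $DG_t(w) = Q(\Lb - \fb'(u))|_{W_X}$ is a compact perturbation of the isomorphism $\Lb|_{W_X}$ (using the compact embedding $X \hookrightarrow Y$ and the boundedness of $\fb'$); for $\bb$ small, $Q\fb'(u)|_{W_X}$ has uniformly small norm, so $DG_t(w)$ remains invertible in $(w,t)$, and standard a priori $W^{2,n}$ estimates for $\Lb$ give properness of $G_t$. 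A proper local diffeomorphism onto a simply connected Banach space is a covering. Injectivity follows by a similar perturbation argument: if $G_t(w_1) = G_t(w_2)$, then $v = w_1 - w_2 \in W_X$ solves $Q(\Lb - c)|_{W_X} v = 0$ with $c(x) = \int_0^1 \fb'(u_2 + sv)\, ds \in [\ab, \ab + \bb]$, forcing $v = 0$. Thus $G_t$ is a homeomorphism, and setting $u(\eta, t) = G_t^{-1}(\eta) + t\phi_m$ gives the fiber $\Lambda_\eta = \{u(\eta, t): t \in \RR\}$.

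Then I would analyze the height. Applying $\psi_m$ to $\Fb(u(\eta, t)) = \eta + h(\eta, t)\phi_m$ and using $\int_\Omega \psi_m u(\eta, t) = t$ gives
\[
h(\eta, t) = \lambda_m\, t - \int_\Omega \psi_m\, \fb(u(\eta, t)) \ .
\]
For the asymptotics, the bounds $\fb(s) \ge \ab\, s - C$ (from $\fb' \ge \ab$) and $\fb(s) \ge (\ab + \bb - \epsilon)\,s - C_\epsilon$ (from $\sup \fb' = \ab + \bb$, valid for any $\epsilon > 0$) yield $h(\eta, t) \le (\lambda_m - \ab)\,t + C$ and $h(\eta, t) \le (\lambda_m - \ab - \bb + \epsilon)\,t + C_\epsilon$, both tending to $-\infty$ thanks to $\ab < \lambda_m < \ab + \bb$. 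For strict unimodality, I would argue that at any critical point $t^*$ of $h(\eta, \cdot)$, the equation $(\Lb - \fb'(u^*))\, u'(t^*) = 0$ places $u'(t^*)$ in the kernel of $\Lb_{c^*} := \Lb - \fb'(u^*)$, a compact perturbation of $\Lb$. For $\bb$ small, Krein--Rutman applies to $\Lb_{c^*}$, whose principal eigenvalue (here equal to $0$) is simple, with positive right- and left-eigenfunctions $\phi^*, \psi^* > 0$. Since $\int_\Omega \psi_m\, u'(t^*) = 1 > 0$, $u'(t^*)$ is a positive multiple of $\phi^*$ and hence $u'(t^*) > 0$. Differentiating the fiber equation once more and applying Fredholm solvability against $\psi^*$ produces
\[
h''(\eta, t^*) \int_\Omega \psi^* \phi_m \; = \; - \int_\Omega \psi^*\, \fb''(u^*)\, u'(t^*)^2 \; < \; 0\ ,
\]
by the strict convexity of $\fb$ and the positivity of all factors. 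Each critical point is therefore a strict local maximum; together with the asymptotics, there is exactly one, and $h(\eta, \cdot)$ is strictly unimodal with $h \to -\infty$ at both ends.

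The hard part will be the tension around $\bb$: on the one hand, the fold asymptotic requires $\ab + \bb > \lambda_m$; on the other, invertibility of $Q(\Lb - c)|_{W_X}$ for $c \in [\ab, \ab + \bb]$ demands $\bb$ not too large, so that $c(x)$ does not push the perturbed operator out of the Krein--Rutman regime around $\lambda_m$. In the classical self-adjoint setting this is handled by the explicit gap condition $\ab + \bb < \mu_m$; here $\Lb$ may have complex spectrum and no clean ``second eigenvalue,'' so one must settle for some $\bb > 0$ in a neighborhood of $\lambda_m$ within its spectral gap, whose existence is guaranteed by the isolation of $\lambda_m$ in $\sigma(\Lb)$.
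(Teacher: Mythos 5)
Your plan is a direct Lyapunov--Schmidt reduction on $\Fb = \Lb - \fb$, using left/right principal eigenfunctions $\psi_m, \phi_m$ of $\Lb, \Lb^\ast$ to split the spaces. The paper instead passes to a resolvent operator $T = \iota \circ \gamma(\Lb + \gamma I)^{-1}$ via the conformal map $\Gamma_\gamma$, reformulates the problem as $F = I - PT: L^n \to L^n$ with $P = I + \Pb/\gamma$, and then applies the abstract machinery for r-special operators (the set $\Hab$ with weak operator compactness, Proposition \ref{r-specialMbgen}, adapted coordinates). Both routes produce fibers and a height function, so there is real overlap in spirit; but where your argument really diverges, it also breaks.

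The decisive gap is in how you establish strict unimodality of $h(\eta, \cdot)$. You differentiate the fiber equation twice and arrive at
\[
h''(\eta, t^\ast) \int_\Omega \psi^\ast \phi_m \; = \; - \int_\Omega \psi^\ast\, \fb''(u^\ast)\, u'(t^\ast)^2 \; < \; 0 ,
\]
which requires $\fb$ to be $C^2$ (and even to have $\fb''$ controlled enough for $h$ to be twice differentiable). But Theorem \ref{theo:BNV} only assumes $\fb$ strictly convex with bounded Newton quotient -- so $\fb$ is merely Lipschitz, and $\fb''$ need not exist. This is not a cosmetic hypothesis: the ability to treat non-smooth nonlinearities is precisely why the paper replaces Jacobian/second-derivative arguments by \emph{linearizations} $G(u,v) = M_{q(u,v)}$ built from the Newton quotient. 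The ``at most two preimages'' property then comes from the strict monotonicity of the spectral radius under the strict positivity $G(y_3,y_2) > G(y_2,y_1)$ (Lemma \ref{smoothlambda} plus Proposition \ref{NT}), a Krein--Rutman order argument that never touches $\fb''$. If you insist on the second-derivative route, you are proving a weaker theorem.

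Two smaller slips. First, you assert that ``for $\bb$ small, $Q\fb'(u)|_{W_X}$ has uniformly small norm''; this is false as stated, since $\fb'$ ranges in $[\ab, \ab + \bb]$ which sits near $\lambda_m$, not near $0$. You must first normalize $\lambda_m = 0$ (equivalently subtract $\lambda_m$ from both $\Lb$ and $\fb'$), after which the perturbation $Q(\fb' - \lambda_m)|_{W_X}$ is indeed $O(\bb)$-small relative to the gap of $\Lb|_{W_X}$. Second, the uniform invertibility of $Q(\Lb - c)|_{W_X}$ over all $c(x) \in [\ab, \ab + \bb]$ is asserted but not argued; the projector $Q$ is built from $\psi_m$, not from the left eigenfunction of $\Lb - c$, so this is not a diagonal restriction and requires a transversality/compactness argument -- exactly what the paper supplies through the weak-operator-topology compactness of $\Qab \subset \Hab$ and Proposition \ref{r-specialMbgen}(ii),(iv). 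Your covering-space phrasing for $G_t$ being a homeomorphism is also an unnecessary detour; Banach--Mazur (proper + local homeomorphism $\Rightarrow$ homeomorphism), which the paper uses, is cleaner and avoids any issue about ``simply connected Banach spaces.''
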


The proof in $\cite{STZ}$ combines elliptic estimates and geometric arguments. We phrase an extension, Theorem \ref{theo:BNV2}, as a geometric statement with a geometric proof.

\medskip

Both results are inspirational, in the specification of hypotheses and in the techniques of proof. Our results are more geometric and more general. We take a different point of view: simple maps are obtained from nonlinear perturbationsof special linear operators. As for the theorems above, the positivity of an eigenvector of the Jacobian, or some appropriate linearization, is an essential ingredient. We use the preservation of positive eigenfunctions associated with the spectral radius (Krein-Rutman type arguments, as in Section 2), or with the smallest eigenvalue of a self-adjoint map (the perturbation theory of nondegenerate groundstates, Section  3). On a global scale, the asymptotic linearity of the nonlinear terms yields a certain uniformity of the linearizations, which belong to a set with privileged compactness properties.

\medskip
Let $Y$ be a real, separable, reflexive Banach space endowed with a normal, generating cone $K \subset Y$ (Section \ref{Cones} presents the terminology associated with cones). Denote  by $k \ge 0$ an element $k \in K$ and by $k > 0$ a quasi-interior point $k \in K$. For a bounded operator $S: Y \to Y$, (i.e. $S \in \BY$), denote its spectrum and spectral radius by $\sigma(S)$ and $r(S)$. An  operator $S \in \BY$ is {\it r-special} if it compact and ergodic with respect to $K$ and $r(S)>0$.

\medskip
Consider a r-special operator $T \in \BY$ with $r(T)=1$. For $b, R >0 $ and an r-special operator $ S\in \BY$, define the set of perturbations
\[  \Hab \ = \  {\mathscr P}^{R,b,S}_T \ = \{   H =  A +  B , \ \hbox{for} \ \ A, B \in \BY, \  \| A\|\le R,  \ \|  B \| \le b , \]\[ \  S \le  A  T \le  T, \   B \ge 0 \ \} \ . \]

\medskip
The map $P: Y \to Y$ admits {\it linearizations} if for $y,z \in Y$, there exists $G(y,z) = G(z,y) \in \BY$ for which $P(y) - P(z) =  G(y,z)(y-z)$.

The proof of Theorem \ref{AP} in \cite{AP} uses linearizations. Linearizations $G(y,z) \in \BY$ are not necessarily continuous in $y,z \in Y$.

\medskip
We enumerate possible properties of $P: Y \to Y$. Let $F: Y \to Y, F = I - PT$.
\begin{enumerate}
\item[(r-H)] $P: Y \to Y$ is a Lipschitz map admitting linearizations $G(y,z) \in \Hab$.
\item [(r-Conv)]
     Let $y_1, y_2, z_1, z_2 \in Y$ such that $y_1 > z_1$, $y_2 > z_2 $, $y_1 \ge y_2 $, $z_1 \ge z_2$ and either $y_1 > y_2 $ or $z_1 > z_2 $. Then $ G(y_1,z_1)  -  G(y_2,z_2) \in \BY$ is  strictly positive.
     \item[(r-Hs)] $P: Y \to Y$ is a $C^1$ map whose Jacobians $J(y)=DP(y)$ belong to $\Hab$.
\item [(r-Convs)]
     Let $y, z \in Y$, $y > z$. Then $ DP(y) - DP(z) \in \BY$ is  strictly positive.
\item[(r-Crit)] $P: Y \to Y$ is $C^1$ and some $y \in Y$ is {\it critical}: $0 \in \sigma(DF(y))$.
\end{enumerate}

\begin{theo}\label{theo:F} Suppose  $Y$ and $K$ as above, $T  \in \BY$  an r-special operator for which $r(T)=1, T \phi = \phi > 0$ and $P:Y\to Y$ a Lipschitz map.  Define the function $F:Y\to Y,  y \mapsto y - P(Ty)$. For small $b>0$,

\smallskip
\noindent (1)  suppose  that (r-H) holds and, for $y,z \in Y$, $r(G(Ty,Tz)T) \ne 1$ .   Then $F$ is a homeomorphism onto its image.

\smallskip
\noindent (2) suppose  that (r-H), (r-Conv) hold. Then $F$ is a simple map.

\smallskip
\noindent (3) if (r-H), (r-Conv) hold and $F: Y \to Y$ is proper, then it is a homeomorphism or folds vertically.

\smallskip
\noindent (4) suppose that (r-Hs), (r-Convs) and (r-Crit) hold. Then $F$ folds downwards.
\end{theo}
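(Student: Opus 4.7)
The overall strategy is to reduce every claim to a spectral statement about operators $GT$ for linearizations $G \in \Hab$. Since $GT$ is compact and positive, with $G = A + B$, $AT \le T$, and $\|B\|$ small, spectral-radius monotonicity places $r(GT)$ close to $r(T) = 1$, while Krein-Rutman identifies the extremal eigenvector with a small perturbation of $\phi$. For part (1), $F(y_1) = F(y_2)$ rewrites via (r-H) as $(I - GT)(y_1 - y_2) = 0$ with $G = G(Ty_1, Ty_2)$; the hypothesis $r(GT)\ne 1$, combined with the spectral bound inherited from $\Hab$, forces $1 \notin \sigma(GT)$, so $y_1 = y_2$. Bicontinuity on the image then follows from a uniform bound on $(I-GT)^{-1}$, valid because for $b$ small all spectra $\sigma(GT)$ sit in a compact set avoiding $1$.

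For parts (2) and (3) I fix a splitting $Y = W \oplus V$ with $V = \langle \phi \rangle$ and, for each $y\in Y$ and $t\in\RR$, solve the $W$-component of $F(w + t\phi) = y + h\phi$ for $w$. The relevant Jacobian $\pi_W GT|_W$ is a compact positive perturbation of $\pi_W T|_W$; since $\phi$ has been quotiented out and $T$ is r-special, $T|_W$ has spectral radius strictly less than $1$, and for $b$ small this persists for every $G\in\Hab$, so Banach's fixed-point theorem delivers $w = w(y,t)$ continuously in $(y,t)$ and produces the fiber $\Lambda_y$. Differentiating along the fiber, $DF(u(y,t))\,u_t = h_t\,\phi$, so a zero of $h_t$ forces $u_t \in \ker DF(u)$ and in particular $1 \in \sigma(DP(Tu)T)$; hypothesis (r-Conv), producing a strictly positive difference between linearizations at any two fiber points, makes $t \mapsto r(DP(Tu(y,t))T)$ strictly increasing via Krein-Rutman monotonicity, so the value $1$ is attained at most once and $h_t$ changes sign at most once. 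This is simplicity. For part (3), properness forces $|h(y,t)|\to\infty$ as $|t|\to\infty$, since $\|u(y,t)\|\to\infty$ while $F(u(y,t))\in L_y$; a continuity-in-$y$ argument, separating the open loci on which $h$ is monotone, unimodal up, or unimodal down, rules out mixed behavior across fibers and yields the dichotomy homeomorphism / vertical fold.

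For part (4), (r-Crit) supplies a critical $y_*$ with $1\in\sigma(DP(Ty_*)T)$; the small-$b$ spectral-radius bound pins $r(DP(Ty_*)T) = 1$ and Krein-Rutman makes $\ker DF(y_*)$ one-dimensional with a positive generator close to $\phi$. Strict comparison (r-Convs) combined with $T\phi = \phi > 0$ then makes $r(DP(Tu(y,t))T)$ strictly increasing in $t$ on each fiber, so $h_t(y,\cdot)$ vanishes at a unique $t_*(y)$ and switches sign from positive to negative there (reflecting whether $DF(u)$ contracts or expands the $\phi$ direction), making $h(y,\cdot)$ strictly unimodal with a maximum at $t_*$. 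The main obstacle I expect is proving the downward asymptotic $h(y,t)\to -\infty$ as $t\to\pm\infty$: closing this step requires converting the pointwise monotonicity of $DP$ into a quantitative spectral gap $r(DP(Tu(y,t))T) \ge 1 + \epsilon$ for $|t|$ large, using the structural bounds available in $\Hab$, so that $h_t$ stays uniformly away from $0$ outside a compact set and integration drives $h$ to $-\infty$ on both sides of $t_*(y)$.
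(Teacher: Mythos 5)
Your high-level reduction to spectral statements about $GT$ is the right one, and part (3) matches the paper's route (Proposition 10 in \cite{STZ}, quoted as Proposition \ref{prop:STZ}). But there are substantive gaps in parts (2) and (4), and a less serious one running through the fiber construction.

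\textbf{Fiber construction and part (1).} You propose Banach's fixed-point theorem, justified by the claim that $\pi_W GT|_W$ has spectral radius strictly less than $1$ for all $G \in \Hab$. Two problems: spectral radius $<1$ does not give a contraction in the ambient norm (you would need $\|\pi_W GT|_W\|<1$, or an equivalent norm making the \emph{whole family} uniformly contractive), and the uniformity over $G \in \Hab$ is exactly the nontrivial point. The paper replaces the contraction argument by a uniform coercivity estimate $|(I-\Pi_W HT)v| \ge C|v|$ (Proposition \ref{r-specialMbgen}(iv)), proved by contradiction via weak-operator-topology compactness of $\Hab$ (Lemma \ref{Gab}) and a weak-limit transfer lemma (Lemma \ref{weaklimits}); adapted coordinates then follow from Banach--Mazur plus Schauder invariance of domain (Proposition \ref{adaptedcoordinatesFr}). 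Your ``uniform bound on $(I-GT)^{-1}$ because spectra avoid $1$'' needs this same compactness machinery to be made honest.

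\textbf{Part (2).} Your argument differentiates along the fiber, $DF(u(y,t))u_t = h_t\phi$, and invokes Jacobians $DP$. But (r-H) assumes only that $P$ is Lipschitz with linearizations; no differentiability is available, and the fibers $t\mapsto u(y,t)$ you construct are themselves only Lipschitz. The paper's Proposition \ref{NT} avoids this entirely: if $F(y_1)=F(y_2)=F(y_3)$ with $y_1<y_2<y_3$, the linearization identity $(I-G(Ty_i,Ty_j)T)(y_i-y_j)=0$ together with Krein--Rutman forces $r(G(Ty_3,Ty_2)T)=r(G(Ty_2,Ty_1)T)=1$, while (r-Conv) plus strict monotonicity of the spectral radius (Lemma \ref{smoothlambda}) forces these two radii to be different --- a contradiction. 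You should recast your unimodality argument in this nondifferentiable form.

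\textbf{Part (4).} This is where the gap is most serious. You assert that $h_t(y,\cdot)$ vanishes at a unique $t_*(y)$ on \emph{every} fiber, but (r-Crit) only supplies one critical point somewhere in $Y$, and nothing in your writeup propagates criticality to each fiber. The paper handles this in two stages: a Hahn--Banach separation argument (Lemma \ref{Hahn-Banach}, Proposition \ref{(M-V)}) shows criticality occurs on every line $\{y+s\psi\}$ with $\psi>0$, and a blow-up argument (Proposition \ref{properl}) yields properness, which in turn (Proposition \ref{fibercrit}) transfers the critical locus onto each fiber. You also miss the degree-theoretic step (Proposition \ref{index}): $\deg(F)=0$, $F$ not injective, and the two regular preimages of a regular value carry opposite indices $\sgn\lambda(y)$, which pins the direction of the fold. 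Your substitute --- a quantitative spectral gap $r(DP(Tu(y,t))T)\ge 1+\epsilon$ for large $|t|$, then integrating $h_t$ --- runs into trouble: inside $\Hab$ one only has $r(HT)\le(1+b)r(T)=1+b$ with $b$ small, so any such $\epsilon$ is squeezed against $b$ and, more importantly, nothing in (r-Hs)/(r-Convs) supplies a lower bound for $r$ bounded away from $1$ without invoking the weak-limit comparison of Proposition \ref{(M-V)}(ii). And even with a sign pattern for $h'$, showing $h\to-\infty$ at \emph{both} ends of the fiber leans on properness, not on a pointwise derivative bound. In short, parts of your plan parallel Lemma \ref{handr} (the identity $h'(t)=\lambda(y(t))\alpha(t)$) and the monotonicity of $r$ along fibers, but you would still need the Hahn--Banach, properness, and degree arguments to close the proof.
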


Item (1) is a non-symmetric version of the Dolph-Hammerstein theorem (\cite{D}, \cite{H}). In items (2) and (3), we circumvent the arguments in \cite{AP} and \cite{BP} which, under smoothness hypotheses, identify the critical points of $F$ as being folds, a fact which follows from the theorem. For Nemitskii maps $P(u) = f(u)$, we obtain finer results (Theorem \ref{theo:BNV2}), from which Theorem \ref{BNV} follows.
Theorem \ref{theo:F} admits variations, but we are interested in displaying a set of techniques, more than specific results. Theorem \ref{theo:F} (4), for example, is extended in Section \ref{newfolds}.

\smallskip

\smallskip
Here is a sketch of proof. From (r-H) or (r-Hs), linearizations (or Jacobians) belong to $\Hab$. We begin with a perturbation result: for $H \in \Hab$ and a special operator $T \in \BY$, $HT$ is also r-special. We use an extension of the Krein-Rutman theorem for cones with possibly empty interior (\cite{De}; for an alternative approach, \cite{Bi}). It is a diluted version of Theorem \ref{MSS} in the Appendix, a condensation of results by Marek, Bonsall, Schaefer and Sawashima (\cite{Mk}).

The convex set $\Hab$ has good compactness properties, implying a uniform coercive bound for operators $I - HT$. An argument with covering spaces, the Banach-Mazur theorem, then converts $F$ into {\it adapted coordinates} (Proposition \ref{adaptedcoordinatesFr}) $F^a : Y \to Y$ for $F$, a nonlinear rank one perturbation of the identity, already present in the original proof of Theorem \ref{AP} by Berger and Podolak (\cite{BP}) and in the proof of Theorem \ref{BNV} in \cite{STZ}. At this point, the proof of Theorem \ref{theo:F}(2) is at hand.

In combination with the convexity-like hypothesis (r-Conv), no point in the counterdomain has more than two preimages under $F$ (Proposition \ref{NT}), so that $F^a$ and $F:Y \to Y$  are simple maps (Theorem \ref{theo:F}(2)). In Theorem \ref{AP}, the convexity of $f$ is essentially {\it necessary}  for $F(u)= - \Delta u - f(u)$ to be a fold (\cite{CTZ2}): the search for further examples led us to  the  material in this text.

From the adapted coordinates, the inverse under $F$ of a vertical line $L$ is a {\it fiber} $\Lambda_y$: our geometric description of $F$ is complete once we compute the asymptotic behavior of $F$ at endpoints of the fiber, informally, how $F(\Lambda_y)$ goes up and down along $L$. Here the arguments bifurcate. For Nemitskii maps, a standard argument (Theorem \ref{theo:BNV2}) provides our proof to Theorem \ref{theo:BNV}.
For more general $C^1$, the existence of one critical point of $F$ (hypothesis (m-Crit)) yields a full fledged critical set (Corollary \ref{cor:abundantcritical}), from which the properness of $F$ follows (Proposition \ref{properl}), and the asymptotic behavior is restricted. Finally, the fact that the images of fibers fold downwards is a consequence of degree theory (Proposition \ref{index}).

Inspired by the fact that Theorem \ref{AP} holds for Sobolev and Hölder spaces, we present a set up in Section \ref{newfolds} where folds transfer between scales of spaces.

\medskip
In Section \ref{section:self-adjoint}, we consider additive, nonlinear perturbations $F(u) = L u - P(u)$ of a self-adjoint operator $L: D \subset H \to H$ on Hilbert spaces $H$ of functions, possibly defined on unbounded domains. We concentrate on the smallest eigenvalue $\lambda_m$ of $L$, which we require to be {\it basic}, an isolated point in $\sigma(L)$, whose invariant subspace is generated by a positive eigenfunction $\phi > 0$. As in Theorem \ref{AP}, the corresponding Theorem \ref{theo:F2} does not restrict the analogous parameter $b$.

Again, the appropriate perturbation theory, from  Faris and Simon (\cite{F}, \cite{RS}), delimits a set containing linearizations and Jacobians. The estimates are tighter, being of a spectral nature, and allow for a different technique to derive properness of $F$, which we apply to spaces of functions on unbounded sets. The compactness of resolvents $(L - \gamma I)^{-1}$ is not needed.

We complete the text with some examples in Section  \ref{sec:examples}.

\medskip
{\bf Acknowledgements:}
Calanchi is partially supported by INdAM-GNAMPA Project 2018.
Tomei is supported by CAPES, CNPq and FAPERJ. He thanks Boyan Sirakov for a number of illuminating conversations.

\section{Proof of Theorem \ref{theo:F}} \label{BNV}

\subsection{Cones and the spectral radius of r-special operators} \label{Cones}

We recall some basic facts about  cones (\cite{De}, \cite{Mk}, \cite{GCZ}).
Let $Y$ be a real Banach space. We denote its norm by $|\cdot|$.
A {\it cone} $K \subset Y$ is a  {\it closed} set for which
\[ 0 \in K \ , \quad K + K \subset K \ , \quad t \ge 0 \ \Rightarrow \ tK \subset K , \quad K \cap (-K) = \{0\} \ . \]
\noindent It induces a partial order in $Y$: $y \le z \Leftrightarrow z - y \in K$. A cone $K$ is {\it reproducing} if $K - K = Y$.
A cone is {\it normal} if, for some $\alpha \in \RR$,
$0\le y\le z \Longrightarrow |y|\le \alpha| z|$.
For a reproducing cone $K$, the {\it dual cone} is
 $K^\ast = \{ k^\ast \in Y^\ast \ | \ \langle k^\ast , k \rangle \ge 0 ,  \forall k \in K \} $
 (brackets denote  the coupling between $Y$ and $Y^\ast$).
According to a result of Krein  (\cite{GCZ}, \cite{Mk}), $K$ is  a normal,  reproducing cone if and only if $K^*$ also is.

\smallskip
An element $k \in K$ is  {\it quasi-interior} if $\langle k^\ast, k\rangle\neq 0$,
for any $k^\ast \in K^\ast \setminus \{0\}$. Similarly, a linear functional $k^\ast \in K^\ast$ is {\it strictly positive} if $\langle k^\ast, k \rangle > 0$  for all $k \in K \setminus \{0\}$. To simplify notation, we denote a quasi-interior point $k$ by $k>0$ and a strictly positive functional $k^\ast$ by $k^\ast >0$.

\smallskip
A cone $K$ is {\it unflattened} if, for some  $M>0$ and any $y \in Y$, there are $k_1, k_2 \in K$ such that $y = k_1 - k_2, |k_1|, |k_2| \le M |y|$. A reproducing cone is unflattened (a consequence of the Krein-Smulian theorem \cite{Vu}).

\smallskip
Denote by ${\cal B}(Y)$ is the space of bounded linear operators from $Y$ to $Y$ equipped with the  sup norm. As usual, the spectral properties of an operator $S \in {\cal B}(Y)$ refer to its complexification. Let $r(S)$ be the spectral radius of $S\in {\cal B}(Y)$.

\medskip
An eigenvalue $\lambda \in \sigma(S)$ of $S \in {\cal B}(Y)$ is  {\it basic} if the following properties hold.

\begin{enumerate}
\item[(b-1)] There are associated eigenvectors $\phi > 0 $ of $S$, $\phi^\ast > 0 $ of $S^\ast$.
\item[(b-2)]  Eigenvectors of $S$ (resp. $S^\ast$)  associated with $\lambda$ are multiples of $\phi$ (resp. $\phi^\ast$).
\item[(b-3)]  No $y \in Y$  satisfies $(S - \lambda) y = \phi$.
\end{enumerate}

An eigenvalue $\lambda \in \sigma(S)$ satisfying properties (b-2) and (b-3) above is {\it simple}. A simple eigenvalue  of $S$ is also of $S^\ast$.

An operator $S \in {\cal B}(Y)$ is {\it positive} ($S\ge 0$) with respect to a cone $K$ if $S K  \subset K$;  $S$ is {\it strictly positive} ($S>0$) if $S(K \setminus \{0\}) \subset K \setminus \{0\}$, and
$S $ is  {\it ergodic } if  for any $k \in K\setminus \{0\}$, $S k$ is a quasi-interior element of $K$, $Sk >0$. An operator $S \in {\cal B}(Y)$ is {\it   r-special} if it is compact and ergodic with $r(S)>0$. If $S$ is r-special, $S^\ast \in {\cal B}(Y^\ast)$ also is (recall that $K^\ast$ is also a normal, reproducing cone).

\medskip
The result below is a special case of Theorem \ref{MSS} in the Appendix.

\begin{theo} \label{theo:KRnointerior}
Let $K$  be a normal reproducing cone of a real Banach space $Y$. Then an r-special operator $S \in {\cal B}(Y)$ has a basic eigenvalue $r(S) = r(S^\ast)$.  Eigenvectors of $S$ in $K$  are multiples of $\phi$. For $\mu > r(S)$,   $(\mu I - S)^{-1} K \subset K$. If $\mu \in \sigma(S)\setminus\{ r(S)\}$, $|\mu| < r(S)$.
\end{theo}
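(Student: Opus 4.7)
The plan is to derive Theorem \ref{theo:KRnointerior} as the specialization to r-special operators of Theorem \ref{MSS} stated in the Appendix, itself a condensation of the Marek--Bonsall--Schaefer--Sawashima theory (\cite{Mk}). The task is therefore to verify that the hypotheses of Theorem \ref{MSS} are met in our setting and then to read off the listed consequences.

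First, the hypotheses match cleanly: $K$ is a normal reproducing cone, so by Krein's theorem $K^\ast$ also is; an r-special operator $S$ is, by definition, compact, ergodic (i.e., $Sk$ is quasi-interior whenever $k\in K\setminus\{0\}$), and has $r(S)>0$. Compactness together with $r(S)>0$ already places $r(S)\in\sigma(S)$ by Riesz--Schauder, and statement (b-1) is obtained by the resolvent construction: for $\mu>r(S)$ the Neumann series $(\mu I-S)^{-1}=\sum_{n\ge 0}\mu^{-n-1}S^n$ converges and each term is positive, yielding $(\mu I-S)^{-1}K\subset K$ at once. Fixing a quasi-interior $k_0\in K$, normalizing $y_\mu=(\mu I-S)^{-1}k_0\in K$, and extracting a limit as $\mu\downarrow r(S)$ via the compactness of $S$ produces an element $\phi\in K$, $\phi\ne 0$, with $S\phi=r(S)\phi$; ergodicity then promotes $\phi=r(S)^{-1}S\phi$ to a quasi-interior eigenvector. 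The same argument run on the r-special dual $S^\ast\in{\cal B}(Y^\ast)$ over the normal reproducing cone $K^\ast$ gives $r(S^\ast)=r(S)$ together with a strictly positive eigenfunctional $\phi^\ast$.

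The delicate ingredients---simplicity in the sense (b-2), absence of a Jordan block (b-3), and the strict spectral dominance $|\mu|<r(S)$ for $\mu\in\sigma(S)\setminus\{r(S)\}$---are exactly what Theorem \ref{MSS} packages. In the absence of interior points for $K$, the classical Krein--Rutman comparison arguments of the form ``$\phi-t\psi\in K$ for small $t$'' are no longer available, so one works instead with the strictly positive functional $\phi^\ast$: for any real $\psi\in\ker(S-r(S)I)$ the scalar $t=\langle\phi^\ast,\psi\rangle/\langle\phi^\ast,\phi\rangle$ is well defined and forces $\langle\phi^\ast,\psi-t\phi\rangle=0$, after which compactness of $S$ and ergodicity of its iterates $S^n$ drive the residual to zero; the nonsolvability of $(S-r(S))y=\phi$ falls out of pairing with $\phi^\ast$. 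The sharpest point, and the one I expect to be the principal obstacle, is strict dominance on the spectral circle: if $S\psi=\mu\psi$ in the complexification with $|\mu|=r(S)$ and $\mu\ne r(S)$, the Jentzsch--Sawashima comparison $S|\psi|\ge|S\psi|=r(S)|\psi|$, iterated and combined with the quasi-interiority of $S^n|\psi|$ provided by ergodicity, forces equality $|\psi|=c\phi$ and then $\mu=r(S)$, a contradiction. This last step is precisely where the empty-interior setting requires careful Banach-space bookkeeping, and is the technical core of Theorem \ref{MSS} in the Appendix, which I would invoke rather than re-derive.
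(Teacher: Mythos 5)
Your overall strategy---verify the hypotheses of Theorem \ref{MSS} and read off its conclusions---is precisely the paper's route, so the architecture is right. However, the hypothesis verification has real gaps. Theorem \ref{MSS} requires that $K$ be a normal \emph{B-cone}, that $S$ be positive and \emph{nonsupporting}, and that $r(S)$ be a \emph{pole} of the resolvent map. You do not address the B-cone hypothesis at all: the paper's chain is that a reproducing cone is unflattened (by Krein--Smulian) and unflattened cones are B-cones (\cite{Sc2}). You also do not record that ergodicity implies $S$ is nonsupporting, nor that compactness makes $r(S)$ a pole of the resolvent (Riesz--Schauder). Without these verifications, \ref{MSS} cannot be invoked.

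There is a second gap in deducing the conclusion. You assert that (b-2) and (b-3) are ``exactly what Theorem \ref{MSS} packages,'' but \ref{MSS}(iii) gives uniqueness only among eigenvectors lying in $K$, and \ref{MSS}(ii) states only that $r(S)$ is a \emph{simple pole} of the resolvent. To extract (b-2), (b-3) from this, the paper runs a Jordan-block argument: by compactness the invariant subspace $V$ of $S$ attached to $r(S)$ is finite-dimensional, the pole order of $R(\lambda,S)$ at $r(S)$ equals the size of the largest Jordan block of the restriction $S|_V$, and simplicity of the pole forces $\dim V=1$, which is (b-2) and (b-3) together. Your proposal skips this extraction; the Jentzsch-style comparison you sketch addresses circle dominance (MSS(iv)) but does not on its own yield simplicity of the algebraic eigenspace. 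Finally, the Neumann-series derivation of (vi) and the limiting construction of $\phi$ you include are correct but redundant once \ref{MSS} is cited; the paper simply reads (b-1) off \ref{MSS}(i) and (v).
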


We list simple properties of r-special operators frequently used in the text.

\begin{cor} \label{twoprojections}
Let $S \in \BY$ be  an  r-special operator with basic eigenvalue $r(S)$ and associated eigenvectors $\phi>0$ and $\phi^\ast>0$ of $S$ and $S^\ast$. Then the operator $S- r(S)\ I: Y \to Y$ is  a  Fredholm operator  of index $0$, the subspaces
$V = \langle \phi \rangle$ and $W = \Ker \phi^\ast= \Ran (S - r(S) I)$ are invariant under $S$ and $Y = W\oplus V$.

\end{cor}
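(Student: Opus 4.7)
The plan is to treat the corollary as a package of routine Fredholm-theoretic and duality consequences of two facts already available: the basicness of $r(S)$ (from Theorem \ref{theo:KRnointerior}) and the compactness of $S$ (from r-specialness). I will organize the proof around these two inputs and finish with the cone-theoretic nondegeneracy $\langle \phi^\ast,\phi\rangle \neq 0$.

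First, since $r(S)>0$ and $S$ is compact, write
\[
S - r(S)\,I \;=\; -r(S)\bigl(I - r(S)^{-1}S\bigr).
\]
The operator $r(S)^{-1}S$ is compact, so Riesz--Schauder theory says $I - r(S)^{-1}S$ is Fredholm of index $0$, and hence so is $S - r(S)I$; in particular its range is closed and equi-codimensional with its kernel. Property (b-2) of a basic eigenvalue gives $\Ker(S - r(S)I) = V = \langle \phi\rangle$, so $V$ is one dimensional and $\Ran(S - r(S)I)$ has codimension one. The two invariances are direct: $S\phi = r(S)\phi$ gives $SV\subset V$, and if $\phi^\ast(y)=0$ then $\phi^\ast(Sy) = (S^\ast\phi^\ast)(y) = r(S)\phi^\ast(y)=0$, so $SW \subset W$.

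Next I identify $W$ with the range. For any $y = (S - r(S)I)x$,
\[
\phi^\ast(y) \;=\; (S^\ast\phi^\ast)(x) - r(S)\phi^\ast(x) \;=\; 0,
\]
so $\Ran(S - r(S)I) \subset \Ker \phi^\ast = W$. Both are closed subspaces of $Y$ of codimension one, therefore they coincide. Finally, $\phi>0$ is quasi-interior in $K$ and $\phi^\ast>0$ lies in $K^\ast\setminus\{0\}$, so by the very definition of quasi-interior $\langle \phi^\ast,\phi\rangle \neq 0$; hence any $c\phi\in W$ forces $c=0$, i.e.\ $V\cap W=\{0\}$. Combined with $\dim V = 1 = \mathrm{codim}\,W$, this transversality yields $Y = W\oplus V$.

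There is no real obstacle here: once one recognizes that (b-2) simultaneously pins down the kernel (to $V$) and, through the index-zero Fredholm property, the codimension of the range, the corollary is essentially bookkeeping combining Riesz--Schauder, the duality $(S-r(S)I)^\ast = S^\ast - r(S)I$, and the pairing $\langle \phi^\ast,\phi\rangle \neq 0$. The only point where one must be a little careful is invoking $r(S)>0$ to rewrite $S - r(S)I$ as a compact perturbation of a scalar multiple of the identity; this is guaranteed by the definition of an r-special operator.
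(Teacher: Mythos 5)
Your proof is correct, and it fills in carefully what the paper dismisses with a one-line ``A simple consequence of $S$ being r-special.'' The chain of reasoning --- Riesz--Schauder gives $S - r(S)I$ Fredholm of index $0$ (using $r(S)>0$ to factor out the scalar), (b-1)/(b-2) pin $\Ker(S - r(S)I)$ to $V=\langle\phi\rangle$, the dual eigenvector relation $S^\ast\phi^\ast = r(S)\phi^\ast$ gives invariance of $W=\Ker\phi^\ast$ and shows $\Ran(S-r(S)I)\subset W$, then closedness plus equal codimension forces equality, and finally quasi-interiority of $\phi$ paired against $\phi^\ast\in K^\ast\setminus\{0\}$ gives $V\cap W=\{0\}$ and hence $Y=W\oplus V$ --- is exactly the standard unpacking the authors must have had in mind. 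One tiny bookkeeping point: the identification of the kernel with $V$ uses both (b-1) (existence of $\phi$) and (b-2) (uniqueness up to scalar), not (b-2) alone, though this does not affect the argument.
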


\begin{proof} A simple consequence of $S$ being r-special.
\qed
\end{proof}

\medskip

The basic eigenvalue $r (S)= r (S^\ast)$ of $S$ and $S^\ast$ and related positive, normalized eigenvectors $\phi$, $\phi^\ast$, vary smoothly with  $S$. We collect some known formulas  (Proposition 79.15  in \cite{Z}, Theorem 4.3 in \cite{Mk}).

\begin{lemma} \label{smoothlambda}
Let $S \in \BY$ be an r-special operator.
There is an open neighborhood ${\cal U}$ of $S$ such that, for $\hat S \in {\cal U}$ the spectral radius $r(\hat S)$ is a simple eigenvalue. The map $\hat S \in {\cal U} \mapsto r(\hat S)$  is real analytic. For appropriate local normalizations, the eigenfunctions $\phi(\hat S)$ and $\phi^\ast(\hat S^\ast)$  are real analytic maps. For a normalization such that $\langle \phi^\ast(\hat S) , \phi(\hat S) \rangle = 1$,
\[ D r(\hat S) \cdot { \overline S} = \langle \phi^\ast(\hat S) , { \overline S} \ \phi(\hat S) \rangle \ . \]

Let $S_\star \in \BY$ be r-special. If $S_\star\ge S$ , then $r(S_\star)\ge r(S)$. Similarly, if $S_\star>S$, then $r(S_\star)>r(S)$. \end{lemma}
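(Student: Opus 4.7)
The plan is to exploit that, by Theorem \ref{theo:KRnointerior}, $r(S)$ is an isolated simple eigenvalue strictly dominating the rest of $\sigma(S)$. Everything else should follow from Riesz spectral projection theory combined with a Krein-Rutman comparison, in line with \cite{Z} and \cite{Mk} cited in the lemma statement.

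\textbf{Isolation, analyticity, derivative.} First I would choose a small circle $\Gamma \subset \CC$ enclosing $r(S)$ and disjoint from the rest of $\sigma(S)$, possible because $|\mu| < r(S)$ for every other $\mu \in \sigma(S)$. Upper semicontinuity of the spectrum then furnishes an open neighborhood $\mathcal{U} \ni S$ in $\BY$ with $\Gamma \cap \sigma(\hat{S}) = \emptyset$ for $\hat{S} \in \mathcal{U}$, on which the Riesz projection
\[ P(\hat{S}) \ = \ \frac{1}{2\pi i}\oint_\Gamma (\lambda I - \hat{S})^{-1}\, d\lambda \]
is real analytic. Since $P(S)$ has rank one by simplicity of $r(S)$, norm continuity keeps $P(\hat{S})$ of rank one on a possibly smaller $\mathcal{U}$, and each such $\hat{S}$ carries a unique simple eigenvalue $\lambda(\hat{S})$ inside $\Gamma$. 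Fixing $\phi^\ast \in K^\ast$ with $\langle \phi^\ast, \phi \rangle = 1$, I would set
\[ \phi(\hat{S}) \ = \ \frac{P(\hat{S})\phi}{\langle \phi^\ast, P(\hat{S})\phi \rangle}\,, \]
which is real analytic and solves $\hat{S}\phi(\hat{S}) = \lambda(\hat{S})\phi(\hat{S})$. Because $\phi$ is quasi-interior and $K$ is closed, $\phi(\hat{S})$ stays in $K$ and remains quasi-interior on a small enough $\mathcal{U}$, so Theorem \ref{theo:KRnointerior} forces $\lambda(\hat{S}) = r(\hat{S})$. Consequently $r(\hat{S}) = \langle \phi^\ast, \hat{S}\phi(\hat{S}) \rangle$ is real analytic, and the same argument applied to $\hat{S}^\ast$ delivers $\phi^\ast(\hat{S}^\ast)$; a rescaling realizes $\langle \phi^\ast(\hat{S}), \phi(\hat{S})\rangle = 1$. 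For the derivative formula I would differentiate the eigenvalue equation along $\bar{S}$, pair with $\phi^\ast(\hat{S})$, and use $\hat{S}^\ast \phi^\ast(\hat{S}) = r(\hat{S})\phi^\ast(\hat{S})$ to cancel the $D\phi(\hat{S})\cdot\bar{S}$ contributions, leaving $Dr(\hat{S}) \cdot \bar{S} = \langle \phi^\ast(\hat{S}), \bar{S}\,\phi(\hat{S}) \rangle$.

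\textbf{Monotonicity.} Suppose $S_\star \geq S$ are both r-special, with $\phi$ the positive eigenvector of $S$ and $\phi_\star^\ast$ that of $S_\star^\ast$. Pairing $(S_\star - S)\phi \in K$ against $\phi_\star^\ast$ and using $S\phi = r(S)\phi$, $S_\star^\ast \phi_\star^\ast = r(S_\star)\phi_\star^\ast$,
\[ 0 \ \leq \ \langle \phi_\star^\ast, (S_\star - S)\phi \rangle \ = \ \bigl(r(S_\star) - r(S)\bigr)\,\langle \phi_\star^\ast, \phi \rangle . \]
Strict positivity of $\phi_\star^\ast$ combined with $\phi \in K \setminus \{0\}$ makes the rightmost pairing strictly positive, so $r(S_\star) \geq r(S)$. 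If $S_\star > S$, then $(S_\star - S)\phi \in K \setminus \{0\}$, the left pairing is strictly positive, and the inequality becomes strict.

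\textbf{Main obstacle.} The delicate step will be the identification $\lambda(\hat{S}) = r(\hat{S})$: a priori $\lambda(\hat{S})$ is only the isolated simple eigenvalue of $\hat{S}$ near $r(S)$, not automatically its spectral radius. The remedy is to exhibit $\phi(\hat{S})$ as a small perturbation of the quasi-interior $\phi$ via the Riesz projection, and then invoke the uniqueness clause of Theorem \ref{theo:KRnointerior}, which confines positive eigenvectors of an r-special operator to the spectral radius.
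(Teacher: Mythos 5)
Your overall structure (Riesz projection for analyticity, cancellation for the derivative formula, dual pairing against $\phi_\star^\ast$ for monotonicity) is sound, and the monotonicity and derivative computations are correct. However, the step you flag as delicate — identifying $\lambda(\hat{S})$ with $r(\hat{S})$ — is justified by an argument that fails in exactly the setting this paper cares about. You write that ``because $\phi$ is quasi-interior and $K$ is closed, $\phi(\hat{S})$ stays in $K$ and remains quasi-interior on a small enough $\mathcal{U}$.'' This is false when $K$ has empty interior, which the paper explicitly allows (the whole point of using the Marek/Bonsall/Sawashima/Schaefer framework rather than classical Krein--Rutman is to handle such cones). In $L^p$, for instance, the constant function $1$ is quasi-interior, yet norm-arbitrarily-close functions can fail to be nonnegative at all. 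Moreover, Theorem \ref{theo:KRnointerior} only applies to r-special operators, and $\hat{S}\in\mathcal{U}$ is an arbitrary bounded operator; you cannot invoke it for $\hat{S}$.

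The identification does hold, but for a different reason, and you should replace that paragraph. Since $r(S)$ is a basic (hence algebraically simple) eigenvalue with $|\mu|<r(S)$ for every other $\mu\in\sigma(S)$, pick $\epsilon>0$ with $\sigma(S)\setminus\{r(S)\}\subset\{|z|\le r(S)-2\epsilon\}$ and take $\Gamma=\partial D_\epsilon(r(S))$. Upper semicontinuity of the spectrum gives a neighborhood $\mathcal{U}$ on which $\sigma(\hat{S})\subset\{|z|<r(S)-\epsilon\}\cup D_\epsilon(r(S))$, and norm continuity of the Riesz projection keeps $P(\hat{S})$ of rank one, so $\hat{S}$ has a unique, algebraically simple eigenvalue $\lambda(\hat{S})\in D_\epsilon(r(S))$. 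Because $Y$ is a real Banach space, the spectrum of the complexification is conjugation-symmetric; $D_\epsilon(r(S))$ is symmetric about the real axis, so a non-real $\lambda(\hat{S})$ would be accompanied by $\overline{\lambda(\hat{S})}$ inside $\Gamma$, forcing $\operatorname{rank}P(\hat{S})\ge 2$, a contradiction. Hence $\lambda(\hat{S})$ is real, and it is positive since it lies within $\epsilon$ of $r(S)>0$; finally $\lambda(\hat{S})>r(S)-\epsilon>|\mu|$ for every other $\mu\in\sigma(\hat{S})$, so $r(\hat{S})=\lambda(\hat{S})$. No positivity of $\phi(\hat{S})$ is needed. (For the record, the paper itself does not supply a proof of this lemma; it cites Proposition 79.15 of Zeidler and Theorem 4.3 of Marek, so your Riesz-projection route is a reasonable reconstruction — once the identification step is repaired as above.)
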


\subsection{Spectral theory in $\Hab$} \label{r-H}

Let $T \in \BY$ be an r-special operator with $r(T)=r(T^\ast)=1$ and associated normal eigenvectors $\phi=\phi(T) > 0 $ and $\phi^\ast=\phi(T^\ast) > 0 $ as in Corollary \ref{twoprojections}.

\begin{prop} \label{speciall} For  $H  \in \Hab$, the operator $ H  T\in \BY$ is  r-special, with a basic eigenvalue $r(H T)= r((H T)^\ast)$ bounded away from zero uniformly in $\Hab$.
\end{prop}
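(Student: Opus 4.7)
The proposition asks for three things: compactness of $HT$, ergodicity of $HT$, and a positive spectral radius bounded below uniformly over $\Hab$. Once these are in place, Theorem \ref{theo:KRnointerior} delivers the basic eigenvalue $r(HT)=r((HT)^*)$ for free. My plan is therefore to verify the three defining conditions of r-specialness separately, leaning on the two-sided sandwich $S\le AT\le T$ and the positivity $B\ge 0$, $T\ge 0$ that the definition of $\Hab$ supplies.

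Compactness is immediate: $T$ is r-special and hence compact, while $H\in\BY$, so $HT$ is compact. For ergodicity, pick $k\in K\setminus\{0\}$ and decompose $HTk=ATk+BTk$. The hypotheses give $ATk\ge Sk$ and $BTk\in K$ (because $B\ge 0$ and $T\ge 0$, the latter following from the ergodicity of $T$), hence $HTk\ge Sk$. Ergodicity of $S$ makes $Sk$ quasi-interior, and quasi-interior points are upward closed: if $y\ge z>0$ and $k^*\in K^*\setminus\{0\}$, then $\langle k^*,y\rangle=\langle k^*,y-z\rangle+\langle k^*,z\rangle>0$. So $HTk>0$, which is exactly the ergodicity of $HT$.

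The delicate step, and the main obstacle, is the uniform lower bound $r(HT)\ge r(S)>0$. The temptation is to invoke the monotonicity clause of Lemma \ref{smoothlambda}, but that clause requires the larger operator to be r-special a priori, which is precisely what I am trying to establish, so the reasoning would be circular. Instead, I would prove monotonicity of the spectral radius directly for positive operators on a normal reproducing cone by a Gelfand-formula argument. Given $0\le A_1\le A_2$ and $k\in K$, an induction yields $0\le A_1^n k\le A_2^n k$, and normality gives $\|A_1^n k\|\le\alpha\|A_2^n k\|$. Since $K$ is reproducing, it is unflattened: every $y\in Y$ splits as $y=k_1-k_2$ with $\|k_i\|\le M\|y\|$. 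Applying the previous bound to each $k_i$ yields $\|A_1^n y\|\le 2\alpha M\|A_2^n\|\,\|y\|$, so $\|A_1^n\|\le 2\alpha M\|A_2^n\|$; passing to $n$-th roots gives $r(A_1)\le r(A_2)$. Specializing to $0\le S\le HT$ produces $r(HT)\ge r(S)$, and since $S$ is a fixed r-special operator built into $\Hab$, the lower bound $r(S)>0$ is uniform in $H$.

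Collecting these three facts, $HT$ is r-special for every $H\in\Hab$. Theorem \ref{theo:KRnointerior} then supplies the basic eigenvalue $r(HT)=r((HT)^*)$, with the uniform lower bound $r(HT)\ge r(S)$ inherited from the previous step. No additional dual argument is needed, since $K^*$ is itself normal and reproducing and the theorem applies to $(HT)^*$ verbatim.
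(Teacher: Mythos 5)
Your proof is correct, and its handling of the uniform lower bound $r(HT)\ge r(S)>0$ is more careful than the paper's. The paper's proof is a one-liner: ``Since $S$ is r-special and $HT\ge S$ from (r-H), $HT$ is r-special and, by Lemma \ref{smoothlambda}, $r(HT)\ge r(S)>0$ uniformly in $\Hab$.'' Compactness and ergodicity match what you wrote. The concern you raise is real: the monotonicity clause of Lemma \ref{smoothlambda}, as stated, requires the dominating operator $S_\star$ to already be r-special, yet $r(HT)>0$ is one of the three properties you are trying to establish, so invoking the lemma for that particular claim is at best an appeal to a fact the lemma does not literally assert. Your Gelfand-formula argument --- inducting $0\le A_1^n k\le A_2^n k$ on $K$, transferring to all of $Y$ via normality plus the unflattened property, and taking $n$-th roots to conclude $r(A_1)\le r(A_2)$ --- is the standard comparison theorem for positive operators and needs no a priori spectral hypothesis on the larger operator; it is what the paper implicitly relies on (via its citations for Lemma \ref{smoothlambda}), so your version simply makes that step explicit and self-contained. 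The final appeal to Theorem \ref{theo:KRnointerior} for the basic eigenvalue of $HT$ and $(HT)^\ast$ matches the paper exactly.
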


\begin{proof} Since $ S$ is r-special and $HT \ge S$ from (r-H), $H T$ is r-special and, by Lemma \ref{smoothlambda}, $r(H T) \ge r(S) >0$ uniformly in $\Hab$. The statements about basic eigenvalues follow from  Theorem \ref{theo:KRnointerior}  and its corollary.
\qed
\end{proof}

\medskip
Endow $\BY$ with the {\it weak operator topology}: for $H_k, H_\infty \in \BY$,
\[ H_k \wconv H_\infty \quad \Leftrightarrow \ \ \forall y \in Y, \ y^\ast \in Y^\ast,
\quad \langle y^\ast ,H_k \ y \rangle \to \langle y^\ast , H_{\infty} \ y \rangle \ . \]

\smallskip
\noindent
Since $K$ and $K^\ast$ are reproducing cones, to show  $ H_k \wconv  H_\infty$, it suffices to prove
\[ \lim \ \langle y^\ast,  H_k \ y \rangle = \langle y^\ast,  H_\infty \ y \rangle \quad \hbox{for} \quad \ y \in K, \ y^\ast\in K^\ast \ . \]

For a separable space $Y$, the weak topology metrizable.

\begin{lemma} \label{Gab} The sets $\Hab$ and $\Haz$ are  convex, compact and sequentially compact in the weak operator topology.
\end{lemma}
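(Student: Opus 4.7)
The plan is to handle $\Hab$ (with $\Haz$ being the special case $b=0$) by verifying convexity, WOT-compactness, and sequential compactness in sequence. Convexity is formal: writing $\mathcal{A} = \{A\in\BY : \|A\|\le R,\ S\le AT\le T\}$ and $\mathcal{B}=\{B\in\BY : \|B\|\le b,\ B\ge 0\}$, each defining constraint is convex (the order conditions because $K$ is convex and the maps $A\mapsto AT-S$, $A\mapsto T-AT$ are affine), so $\mathcal{A}$ and $\mathcal{B}$ are convex and $\Hab=\mathcal{A}+\mathcal{B}$ is their Minkowski sum.

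For WOT-compactness I would first invoke the standard fact that, when $Y$ is reflexive, norm-closed balls in $\BY$ are WOT-compact. Embed $\BY$ into $\prod_{y\in Y}Y$ via $A\mapsto(Ay)_y$, with each factor endowed with its weak topology; this identifies the WOT on $\BY$ with the subspace topology from the product. The ball of radius $\rho$ lands in $\prod_y\overline{B}_Y(0,\rho\,|y|)$, a product of weakly compact sets (Banach--Alaoglu combined with reflexivity) that is compact by Tychonoff, and the image is closed there (linearity passes to limits in the product topology, and weak lower semicontinuity of $|\cdot|$ preserves the bound $\rho$). So the ball is WOT-compact, and $\Hab\subset\{H : \|H\|\le R+b\}$ is contained in such a ball.

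Next I would check WOT-closedness of $\mathcal{A}$ and $\mathcal{B}$. Since $K$ is closed and convex it is weakly closed by Mazur's theorem. For each fixed $y\in K$ the map $A\mapsto(AT-S)y$ is WOT-to-weak continuous, so $\{A:(AT-S)y\in K\}$ is WOT-closed; intersecting over $y\in K$ yields the WOT-closed condition $AT\ge S$, and $AT\le T$ and $B\ge 0$ are treated identically. The norm bounds are WOT-closed because $\|\cdot\|$, being a supremum of WOT-continuous linear functionals, is WOT-lower semicontinuous. Hence $\mathcal{A}$ and $\mathcal{B}$ are WOT-compact (as WOT-closed subsets of WOT-compact balls), and $\Hab$, being the image of $\mathcal{A}\times\mathcal{B}$ under the WOT-continuous addition map, is WOT-compact as well.

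For sequential compactness, I would exploit that separability and reflexivity of $Y$ force $Y^*$ to be separable (apply the classical implication ``$X^*$ separable $\Rightarrow$ $X$ separable'' to $X=Y^*$, since $Y^{**}=Y$ is separable by reflexivity). Then the weak topology on bounded subsets of $Y$ is metrizable, and WOT on the bounded set $\Hab$ is metrizable too, via a countable family of seminorms built from dense subsets of $Y$ and $Y^*$; in a compact metric space sequential compactness and compactness coincide. The only delicate step is tracking which topology (WOT on $\BY$ versus weak on $Y$) plays which role when the cone conditions are reformulated as closedness statements; once $K$ is declared weakly closed via Mazur's theorem, the rest is routine.
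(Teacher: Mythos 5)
Your proof is correct and follows the same overall strategy as the paper's: reduce to the WOT-compactness and sequential compactness of norm-closed balls in $\BY$ for reflexive separable $Y$, and then check that the defining constraints survive passage to operator limits. The differences are in level of detail and in two places where your version is actually tighter than the printed one. First, the paper simply cites the ball-compactness and metrizability facts, whereas you prove them (Tychonoff embedding into $\prod_y Y_{\mathrm{weak}}$; separability of $Y^\ast$ from reflexivity plus separability of $Y$, hence metrizability of WOT on bounded sets); this is more self-contained but not a different route. Second, the paper verifies only that the conditions $S\le H_kT\le T$, $\|H_k\|\le R$ pass to a \emph{strong} operator limit, which leaves two small gaps that you close: (a) a SOT-closed set need not be WOT-closed in general, so one must either check WOT-closedness directly, as you do by writing each order constraint as an intersection of preimages of the weakly closed cone $K$ under WOT-to-weak continuous maps, or invoke the coincidence of SOT- and WOT-closures for \emph{convex} sets; and (b) elements of $\Hab$ carry a decomposition $H=A+B$ with separate constraints on $A$ and $B$, so closedness of $\Hab$ itself requires extracting admissible limits of $A_k$ and $B_k$ separately --- your device of presenting $\Hab$ as the image of the compact set $\mathcal{A}\times\mathcal{B}$ under the (WOT-continuous) addition map handles this cleanly. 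So: same architecture, but your writeup repairs the two points the paper's sketch glosses over.
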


\begin{proof} We obtain compactness in the weak operator topology.  Closed balls $\{|| H||\le C\} \subset {\cal B}(Y)$ are compact and sequentially compact in the weak operator topology for a reflexive, separable Banach space $Y$ (\cite{Eldredge}, \cite{Wofsey}). It suffices  to show that $\Hab$  is closed in the strong operator topology: for   $ S \le  H_k  T \le  T, \ \| H_k\|\le R$, if  $ H_k \sconv   H $, the same inequalities hold for $ H$.
\qed
\end{proof}

\medskip
Spectral objects behave well under weak operator convergence.

\begin{lemma} \label{weaklimits}
Let $ H_k \in \Hab$ such that $H_k \wconv H_\infty \in\Hab$.
Consider sequences $\{y_k \in Y, | y_k |=1\}$, $\{g_k\in Y, g_k \to g_\infty \}$, $\{ \lambda_k \in \RR, \ 0 < c < \lambda_k < C\}$ for which
\[ H_k T y_k = \lambda_k y_k + g_k \ . \]
Then, for some (relabeled) subsequences, $y_k \to y_\infty \ne 0$, $\lambda_k \to \lambda_\infty > 0 $ and $H_\infty T y_\infty = \lambda_\infty y_\infty + g_\infty$.
\end{lemma}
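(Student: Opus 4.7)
The plan is to extract subsequences along which each relevant object converges and then pass to the limit in the identity $\lambda_k y_k = H_k T y_k - g_k$. The whole argument is driven by the compactness of $T$: on the bounded sequence $\{y_k\}$ it produces a strongly convergent sequence $\{T y_k\}$, and it is only on strongly convergent test elements that the weak operator convergence $H_k \wconv H_\infty$ can be combined with the uniform norm bound $\|H_k\|\le R$ (built into $\Hab$) to produce a meaningful limit.

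Since $\lambda_k \in [c,C]$ with $c>0$, I first pass to a subsequence along which $\lambda_k \to \lambda_\infty \in [c,C]$, so $\lambda_\infty > 0$. Next, $\{y_k\}$ is bounded and $Y$ is reflexive and separable, so after a further subsequence $y_k \wconv y_\infty$ weakly in $Y$; since $T$ is compact, this upgrades to $T y_k \to T y_\infty$ strongly. Splitting
\[ H_k T y_k \;=\; H_k (T y_\infty) \;+\; H_k (T y_k - T y_\infty), \]
the first term converges weakly in $Y$ to $H_\infty T y_\infty$ directly from the definition of $\wconv$, while the second is controlled by $R\,|T y_k - T y_\infty| \to 0$ and vanishes in norm. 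Hence $H_k T y_k \wconv H_\infty T y_\infty$. Using $g_k \to g_\infty$ strongly and $\lambda_k \to \lambda_\infty$, the identity $\lambda_k y_k = H_k T y_k - g_k$ yields on one hand $\lambda_k y_k \wconv H_\infty T y_\infty - g_\infty$, and on the other $\lambda_k y_k \wconv \lambda_\infty y_\infty$; uniqueness of weak limits then gives $H_\infty T y_\infty = \lambda_\infty y_\infty + g_\infty$.

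Finally, for $y_\infty \ne 0$, I argue by contradiction. If $y_\infty = 0$ then $T y_\infty = 0$, so $T y_k \to 0$ strongly and $|H_k T y_k| \le R\,|T y_k| \to 0$. The equation gives $\lambda_k y_k \to -g_\infty$ strongly, whence $y_k \to -g_\infty/\lambda_\infty$ strongly; but strong and weak limits must agree, forcing $g_\infty = 0$, hence $|y_k| \to 0$, contradicting $|y_k|=1$. The delicate step throughout is passing to the limit in $H_k T y_k$: weak operator convergence alone cannot commute with a merely bounded sequence, and compactness of $T$ is precisely what converts $\{y_k\}$ into a strongly convergent $\{T y_k\}$ on which the uniform bound $\|H_k\| \le R$ takes over.
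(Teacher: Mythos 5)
Your treatment of the limit equation and of $y_\infty \neq 0$ is correct and is essentially the paper's own argument: weak sequential compactness of the unit ball in the reflexive separable $Y$, compactness of $T$ to turn $y_k \rightharpoonup y_\infty$ into $Ty_k \to Ty_\infty$ in norm, the splitting $H_kTy_k = H_kTy_\infty + H_kT(y_k - y_\infty)$ in which the uniform bound on $\|H_k\|$ kills the second term and weak operator convergence handles the first, and the contradiction with $|y_k| = 1$ when $y_\infty = 0$.

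There is, however, one genuine omission. The lemma asserts $y_k \to y_\infty$ in \emph{norm}, not merely $y_k \rightharpoonup y_\infty$ (the paper consistently writes $\rightharpoonup$ for weak convergence and $\to$ for strong), and your proof stops at weak convergence in the case $y_\infty \neq 0$. This is not cosmetic: the lemma is later invoked, e.g.\ in Proposition \ref{r-specialMbgen}(i), to pass to the limit in $\langle \psi_k, \phi_k^\ast\rangle = 0$ where $\phi_k^\ast$ converges only in a weak-$\ast$ sense, and the pairing of two merely weakly convergent sequences need not converge; the norm convergence of $\psi_k$ is what legitimizes that step. The paper closes this with one more line: from the eigenvalue relation, $y_k = (H_kTy_k - g_k)/\lambda_k$, and it argues that the right-hand side converges in norm (here $H_kT(y_k-y_\infty)$ is $O(|T(y_k - y_\infty)|) \to 0$, $g_k \to g_\infty$ and $\lambda_k \to \lambda_\infty > 0$, so the issue reduces to the convergence of $H_kTy_\infty$, a fixed vector acted on by the sequence $H_k$). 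You should append an argument of this kind; note that it still requires more than bare weak operator convergence on the single vector $Ty_\infty$, so this last step deserves explicit attention rather than silence.
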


\begin{proof} Since $Y$ is separable and reflexive, from the sequential Banach-Alaoglu theorem, there is a subsequence $y_k \rightharpoonup y_\infty \in Y$ and, from the bounds on $\{ \lambda_k\}$, $\lambda_k \to \lambda_\infty \ne 0$. As $T$ is compact, for yet another  subsequence $y_k \rightharpoonup y_\infty \in Y$, $T y_k \to T y_\infty$. Take $\ell \in Y^\ast$ and use $H_k \wconv H_\infty$:
\[ \langle \ell , H_\infty T y_\infty \rangle  = \lim \langle \ell , H_k T y_\infty \rangle = \lim \ \big( \langle \ell , H_k T (y_\infty - y_k) \rangle  +\ \langle \ell , H_k T y_k \rangle \big) \ .\]
As $T(y_k - y_\infty) \to 0$ and the operators $H_k$ are uniformly bounded by (r-H), the first term goes to zero and then
\[ \langle \ell , H_\infty T y_\infty \rangle  =  \lim \ \langle \ell , H_k T y_k \rangle = \lim \ \langle \ell , \lambda_k y_k + g_k\rangle  =  \langle \ell , \lambda_\infty y_\infty + g_\infty \rangle \ ,\]
and thus $H_\infty T y_\infty = \lambda_\infty y_\infty + g_\infty$. If $y_\infty = 0$, then $T y_k \to T y _\infty =0$ and $g_\infty = 0$.
so that, from $ H_k T y_k = \lambda_k y_k + g_k$,
since $| \lambda_k y_k|$ is bounded away from zero and $\{H_k\}$ is bounded, we must have $y_\infty \ne 0$, a contradiction. Thus $y_\infty \ne 0$. Finally, $y_k = (H_k T y_k - g_k)/\lambda_k$ and the right hand side converges: $y_k \to y_\infty$.
\qed
\end{proof}

\medskip
Set $V= \langle \phi(T) \rangle$, $W= \Ker \phi^\ast(T)$ as in Corollary \ref{twoprojections}, so that $Y = W \oplus V$. Define the associated projection $\Pi_W: Y \to W$.

\begin{prop}\label{r-specialMbgen}    For  $a \in (0, 1]$, there  exists ${\bf b} >0$ such that, for  $ H \in \Habb$, the following properties hold.
\begin{enumerate}
\item[(i)] If $1 \in \sigma(HT)$  is an eigenvalue, then $1=r(HT)$.
\item[(ii)] If $(I - HT) w =  c \ \phi(T)$ for $w \in W$, $ c \in \RR$,  then $w = 0$.
\item[(iii)] $HT$ does not have more than one eigenvalue larger than 1.
\item[(iv)] Assume ${\bf b}$ for which items (i)-(iii) hold. Then the operators $I- \Pi_W H T: W \to W$ are uniformly coercive:
\[ \exists \ C > 0, \  \forall \  v \in W,\ \forall \ t\in \RR, \quad |(I- \Pi_W HT) v| \ \ge \ C \ |  v | \ , \]
\end{enumerate}
\end{prop}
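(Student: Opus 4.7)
The plan is to reduce each of the four items to its base case at $\mathbf{b}=0$ (where $H=A\in\Haz$, so $AT\le T$ by definition), since all four are spectral statements about $HT$ near $1$. The reduction uses the compactness of $\Habb$ in the weak operator topology (Lemma \ref{Gab}) together with the limit procedure of Lemma \ref{weaklimits}. I would settle (ii) first, derive (iv) from it, and treat (i) and (iii) together via a ``coalescing eigenvalue'' argument.

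\emph{Base case of (ii), $\mathbf{b}=0$.} By Proposition \ref{speciall}, $AT$ is r-special, and Lemma \ref{smoothlambda} combined with $AT\le T$ gives $r(AT)\le r(T)=1$. If $r(AT)<1$, then $I-AT$ is invertible and $w=c\sum_{n\ge0}(AT)^n\phi$; since $AT\phi=A\phi$ is quasi-interior by ergodicity of $AT$, the series is quasi-interior in $K$, and pairing with the strictly positive $\phi^\ast=\phi^\ast(T)$ together with $\langle\phi^\ast,w\rangle=0$ forces $c=0$ and hence $w=0$. If $r(AT)=1$, the basic-eigenvalue Fredholm alternative forces $c\phi\in\Ker\phi^\ast(AT)$, so $c\langle\phi^\ast(AT),\phi\rangle=0$ and $c=0$; then $w\in\Ker(I-AT)=\mathrm{span}\{\phi(AT)\}$ is annihilated by $\phi^\ast$ only if $w=0$.

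\emph{Lifting and item (iv).} For $\mathbf{b}>0$ small, suppose (ii) fails along $\mathbf{b}_k\downarrow 0$, $H_k\in\mathscr P^{R,\mathbf{b}_k,S}_T$, $w_k\in W$, $|w_k|=1$, $(I-H_kT)w_k=c_k\phi$. Pairing with $\phi^\ast$ gives $c_k=-\langle\phi^\ast,H_kTw_k\rangle$, uniformly bounded. By Lemma \ref{Gab} extract $H_k\wconv A_\infty\in\Haz$; Lemma \ref{weaklimits} (with $\lambda_k=1$, $g_k=c_k\phi\to c_\infty\phi$) yields $w_\infty\ne0$ lying in the weakly closed $W$ and satisfying $(I-A_\infty T)w_\infty=c_\infty\phi$, contradicting the base case. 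For (iv), if $v_k\in W$, $|v_k|=1$, and $g_k:=(I-\Pi_W H_kT)v_k\to 0$, rewrite $(I-H_kT)v_k=g_k-\alpha_k\phi$ with $\alpha_k=\langle\phi^\ast,H_kTv_k\rangle$ bounded; the limit $v_\infty\in W\setminus\{0\}$ satisfies $(I-A_\infty T)v_\infty\in V$, again contradicting (ii).

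\emph{Items (i) and (iii).} Suppose (i) fails: $H_kTy_k=y_k$, $|y_k|=1$, but $\lambda_k:=r(H_kT)>1$. Let $\psi_k>0$, $|\psi_k|=1$, be the positive basic eigenvector of $H_kT$. Lemma \ref{weaklimits} produces weak limits $y_\infty\ne0$, $\psi_\infty\ne0\in K$ with $A_\infty T y_\infty=y_\infty$, $A_\infty T\psi_\infty=\lambda_\infty\psi_\infty$. Since $\psi_\infty\in K\setminus\{0\}$ is a positive eigenvector of the r-special $A_\infty T$, $\lambda_\infty=r(A_\infty T)$; combined with $A_\infty T\le T$ and $\lambda_\infty\ge 1$, we get $\lambda_\infty=r(A_\infty T)=1$, so $y_\infty$ and $\psi_\infty$ both lie in the one-dimensional $\Ker(I-A_\infty T)=\mathrm{span}\{\phi(A_\infty T)\}$; write $y_\infty=\alpha\phi(A_\infty T)$. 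The contradiction must come from the orthogonality $\langle\phi^\ast(H_kT),y_k\rangle=0$ (forced by $y_k,\psi_k$ sitting at distinct eigenvalues $1\ne\lambda_k$). The main obstacle I anticipate is passing this identity to the limit with only weak convergence of $y_k$; I would resolve it by noting that $T^\ast$ is compact on the reflexive $Y^\ast$, so $\phi^\ast(H_kT)=\lambda_k^{-1}(H_kT)^\ast\phi^\ast(H_kT)$ lies in a precompact subset of $Y^\ast$, yielding a strong subsequential limit $\phi^\ast(H_kT)\to\gamma\phi^\ast(A_\infty T)$ with $\gamma>0$. The strong--weak pairing then gives $0=\gamma\alpha\langle\phi^\ast(A_\infty T),\phi(A_\infty T)\rangle$, forcing $\alpha=0$ and $y_\infty=0$, contradicting Lemma \ref{weaklimits}. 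Item (iii) is handled identically, with two distinct eigenvalues exceeding $1$ collapsing to $r(A_\infty T)=1$ in the limit.
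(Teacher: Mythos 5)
Your proposal is correct, and its engine is the same as the paper's: a contradiction argument over $b_k\downarrow 0$, the weak-operator compactness of the sets $\Habb$ (Lemma \ref{Gab}), the limit procedure of Lemma \ref{weaklimits}, monotonicity of the spectral radius, and the simplicity/positivity of the basic eigenvalue of the limit operator. There are, however, some genuine local differences worth recording. For item (iii) the paper rescales $HT$ by the smaller of the two eigenvalues and reduces to item (i) for a modified class $\mathscr P^{R,b,S/\lambda}_T$, whereas you rerun the compactness argument directly; your route avoids having to check that (i) holds uniformly over the rescaled classes, which the paper leaves implicit. For item (ii) you isolate the $\mathbf b=0$ case (Neumann series when $r(AT)<1$, Fredholm alternative when $r(AT)=1$) and then lift; the paper folds this into a single contradiction split according to whether $|w_k|$ is bounded. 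Finally, your treatment of the passage to the limit of the dual eigenvectors $\phi^\ast(H_kT)$ — via compactness of $T^\ast$ and the factorization $\phi_k^\ast=\lambda_k^{-1}T^\ast H_k^\ast\phi_k^\ast$ — makes precise a step the paper only gestures at (``by an argument as in Lemma \ref{weaklimits}''). One small slip: in item (iv) the contradicting sequence lives in $\Habb$ for the \emph{fixed} $\mathbf b$, so the weak limit is an element of $\Habb$, not of $\Haz$; this is harmless because the item (ii) you invoke has already been established for all of $\Habb$, but the limit should not be denoted $A_\infty$ as if it were in the base class.
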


\medskip
\begin{proof}  All items will be proved by contradiction.

\noindent (i) Suppose $H_k \in \Habk$, $b_k <1/k$ such that  $1 \in \sigma(H_kT)$ and $r(H_kT)>1$. As $T \in \BY$ is compact, so is $H_kT$, and thus $\sigma(H_kT)\setminus \{0\}$ contains only eigenvalues: there are  eigenvectors (normalized in $Y$) for which $H_k T\psi_k = \psi_k$ and $H_k T\phi_k =r(H_k T) \phi_k$, such that $r(H_k T) > 1$ is a basic eigenvalue, $\phi_k > 0$. From Lemma \ref{Gab},  up to subsequences, $ H_k \wconv  H_\infty \in \Haz$.
 From Lemma \ref{weaklimits}, setting $\lambda_k=1$ and $g_k = 0$,  there exists $\psi_\infty\in Y$  such that (up to subsequences)   $\psi_k \to  \psi_\infty\neq 0$, with  $H_\infty T \psi_\infty =  \psi_\infty$.

\smallskip
From the monotonicity of the spectral radius (Lemma \ref{smoothlambda}), $1 \le r(H_k T) \le (1+ b_k)r( T) \to 1$ and, again by Lemma \ref{weaklimits} with $\lambda_k=r(H_k T)$,  we have
$ \phi_k\to \phi_\infty \ne 0$ in $Y$, with $ H_\infty T  \phi_\infty = \phi_\infty\in X$.

\medskip We show $\phi_\infty \ne\pm \psi_\infty$.
Indeed, there is a sequence of dual  normalized eigenvectors $\phi_k^*\in Y^\ast$, $\phi_k^* > 0 $, $ (H_k T)^* \phi_k^* = \lambda_k \phi_k^*$,
which converges  to $\phi_\infty^* \in Y^\ast$ by an argument as in Lemma \ref{weaklimits}. Also, $(H_\infty T)^* \phi_\infty^* = \phi_\infty^*$,  \ \  $\phi_\infty^* \in K^\ast$, $ \phi_\infty^\ast  \neq 0 $.
From Corollary \ref{twoprojections} (i), $ \langle \psi_k, \phi_k^* \rangle =0$ and by taking limits, $ \langle \psi_\infty, \phi_\infty^* \rangle =0$. If $\phi_\infty =\pm \psi_\infty$, then $ \langle \psi_\infty, \phi_\infty^* \rangle \ne 0$ by positivity. But then $\phi_\infty$ and $\psi_\infty$ are independent eigenvectors, contradicting the simplicity of $r(H_\infty T)$ (Proposition \ref{speciall}).

\medskip
\noindent (ii) Take  $\bf b$ such that item (i) holds.  Write
\[    H_k Tw_k= w_k - c_k \ \phi(T) \ , \quad w_k \in W\setminus \{0\},  \quad b_k = \| B_k \| \to 0 \ . \]
If $c_k=0$, one has $1\in \sigma(H_k T)$ and, from (i), $r(H_k T)=1$. From Proposition \ref{speciall}, we assume $w_k>0$. By positivity,  this contradicts $ w_k \in W= \Ker \phi(T^\ast)$. Thus $c_k \ne 0$: rescale  and consider  $c_k=1$ for all $k$.

\smallskip
If $\ |w_k|$ is unbounded, normalize $\hat w_k = w_k/ |w_k|$. For a subsequence of $\{H_k\}$, $H_k \wconv   H_\infty \in \Haone$ and, from Lemma \ref{weaklimits},  $\hat w_k \to \hat w_\infty \in W \setminus \{0\}$ and
$H_\infty T \hat w_\infty = \hat w_\infty$. Therefore, $1 \in \sigma(H_\infty T )$    and $r(H_\infty T)=1$ by item (i). Again, take $\hat w_\infty >0$, contradicting $ \hat w_\infty \in W= \Ker \phi( T^\ast)$.

\smallskip
Suppose instead that $|w_k|$ is bounded.  For subsequences, $w_k \rightharpoonup w_\infty $ in $Y$,  $ H_k \wconv H_\infty $, and  again from Lemma \ref{weaklimits}, $w_k \to w_\infty \neq 0$ and $w_\infty- H_\infty T w_\infty = \phi(T)$.

 Again by monotonicity,  $1 < r(  H_k T ) \le (1+ b_k)r( T) \to 1$ and $r( H_\infty T)\in [a,1]$. If  $r(H_\infty T)<1$, $w_\infty = (I - H_\infty T)^{-1}  \phi $ and by Theorem  \ref{theo:KRnointerior}, $w_\infty \in K$, contradicting $w_\infty \in W\setminus\{0\} $.  If  $r(H_\infty T) =1$, we must have $\phi \in \Ran (I - H_\infty T) = \ker \phi((H_\infty T)^\ast)$ and again this cannot  happen, as $\phi((H_\infty T)^\ast)>0$.

\medskip
\noindent (iii) Suppose $\lambda, \mu \in \sigma(HT)$ such that $1< \lambda < \mu$ (recall that the top eigenvalue is simple, and the eigenvalues are isolated). Now consider the operator $ H T = HT/ \lambda$, with eigenvalues $1/\lambda < 1< \mu/\lambda$. The operator belongs to an appropriate $\Hab$ (replace $S$ by $S/\lambda$) and thus, if $1 \in \sigma( HT)$, we must have $1 = r( HT)$, by (i), contradicting $1< \mu/\lambda$.

\medskip
\noindent (iv) Suppose $\epsilon_k \to 0$ and $v_k \in W$, $t_k\in \RR$ for which
$|(I- \Pi_WHT) v_k| \ < \ \epsilon_k \ |  v_k | $.
Thus $v_k \ne 0$ and one may normalize, $z_k = v_k/|v_k|$, so that $z_k - \Pi_W  H_k T z_k \to 0$. By Proposition \ref{speciall}, for a subsequence, $H_k \wconv G_\infty \in \Hab$. As in Lemma \ref{weaklimits}, for a subsequence $z_k \to z_\infty\in W$,
$ z_\infty - \Pi_W  G_\infty  T  z_\infty =0$ so that $ z_\infty - G_\infty T z_\infty = c \phi(T)$.

By item (ii), since $z_\infty \in W$, we must have $c = 0$ and then $ z_\infty =0$. On the other hand, since $ z_k \to  z_\infty=0$ and the $H_k$'s are uniformly bounded,
  $$|H_k T z_k| \to 0, \quad {\rm and}\quad | \Pi_W H_k T z_k |\to 0, $$
yielding a contradiction:  $z_k - \Pi_W  H_k T z_k \to 0$ and  $|z_k|=1$.
\qed
\end{proof}

\subsection{Hypothesis (r-H): adapted coordinates} \label{r-HH}

Let $T \in \BY$ as in Theorem \ref{theo:F}, basic eigenvalue $r(T)$ and associated eigenfunctions $\phi= \phi(T) \in Y$, $\phi^\ast = \phi(T^\ast)\in Y^\ast$. From Corollary \ref{twoprojections}, the decomposition $Y = W \oplus V$ for $V = \langle \phi\rangle$ and $W = \Ker \phi^*$ induces projections $\Pi_W: Y \to W$ and $\Pi_V:Y \to V$.
The {\it horizontal hyperplane} at height $t$ is  $W^t = W + t \phi$. Define the projected restrictions of $F$,
\[ F^t : W \to W, \quad F^t(w) = w - \Pi_W  P(T(w + t \phi))  . \]

The next proposition finesses some elliptic estimates employed in Theorem \ref{BNV}.

\begin{prop} \label{coercivelip} Assume the hypotheses of Theorem \ref{theo:F} and (r-H) for ${\bf b}$ such that Proposition \ref{r-specialMbgen} holds. Then $F^t: W \to W$ are uniformly Lipschitz coercive:
\[ \exists \ C > 0:\  \  \forall \ w ,\  v \in W,\ \forall \ t\in \RR, \quad | F^t(w) - F^t(v)| \ \ge \ C \ | w - v | \ , \]
and so are the maps $\Psi: W \oplus \RR \to W \oplus \RR, (w, t) \mapsto (z,t) = (F^t(w), t)$.
\end{prop}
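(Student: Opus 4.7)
The plan is to reduce both claims to the uniform coercivity of $I - \Pi_W HT$ on $W$ from Proposition \ref{r-specialMbgen}(iv), using hypothesis (r-H) to rewrite a finite difference of $P$ as the action of a single operator in $\Habb$. Fix $t \in \RR$ and $w, v \in W$. Since $T\phi = \phi$, the points $y = T(w + t\phi) = Tw + t\phi$ and $z = T(v + t\phi) = Tv + t\phi$ satisfy $y - z = T(w - v)$. Hypothesis (r-H) supplies a linearization $G = G(y,z) \in \Habb$ with $P(y) - P(z) = G\,T(w - v)$, so
\[ F^t(w) - F^t(v) = (w - v) - \Pi_W\bigl[P(y) - P(z)\bigr] = (I - \Pi_W G T)(w - v) \in W. \]
Proposition \ref{r-specialMbgen}(iv) then supplies a constant $C > 0$, uniform over $G \in \Habb$ and therefore over $t$ and $(w,v)$, for which $|F^t(w) - F^t(v)| \geq C |w - v|$. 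This is the first claim.

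For $\Psi$, endow $W \oplus \RR$ with the norm $|(w, t)| = |w| + |t|$ and decompose
\[ F^{t_1}(w_1) - F^{t_2}(w_2) = \bigl[F^{t_1}(w_1) - F^{t_1}(w_2)\bigr] + \bigl[F^{t_1}(w_2) - F^{t_2}(w_2)\bigr]. \]
By the preceding paragraph the first bracket has norm at least $C|w_1 - w_2|$. For the second, (r-H) makes $P$ globally Lipschitz with constant $R + {\bf b}$, since $\|G\| \leq R + {\bf b}$ for every $G \in \Habb$; applied with $y = Tw_2 + t_1 \phi$ and $z = Tw_2 + t_2 \phi$ it bounds the second bracket by $M|t_1 - t_2|$, where $M = \|\Pi_W\|(R + {\bf b})\,|T|\,|\phi|$. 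Hence
\[ |F^{t_1}(w_1) - F^{t_2}(w_2)| \geq C|w_1 - w_2| - M|t_1 - t_2|. \]

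A two-case split finishes the argument. If $M|t_1 - t_2| \leq (C/2)|w_1 - w_2|$, the displayed bound already yields $|F^{t_1}(w_1) - F^{t_2}(w_2)| \geq (C/2)|w_1 - w_2|$, so $|\Psi(w_1,t_1) - \Psi(w_2,t_2)| \geq (C/2)|w_1 - w_2| + |t_1 - t_2|$. Otherwise $|t_1 - t_2| > (C/(2M))|w_1 - w_2|$, and the identity in the second coordinate of $\Psi$ alone provides coercivity. Combining both cases gives $|\Psi(w_1,t_1) - \Psi(w_2,t_2)| \geq C'\bigl(|w_1 - w_2| + |t_1 - t_2|\bigr)$ for a suitable $C' > 0$.

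All the spectral content is already absorbed in Proposition \ref{r-specialMbgen}(iv); no genuine obstacle remains, and the only conceptual step is that (r-H) converts a nonlinear difference $P(y) - P(z)$ into a linear action by an operator in $\Habb$, which is precisely the set over which the coercivity of $I - \Pi_W HT$ holds uniformly.
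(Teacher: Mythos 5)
Your proof is correct and follows the paper's approach: the first claim is exactly the paper's argument, rewriting $F^t(w)-F^t(v)=(I-\Pi_W G T)(w-v)$ for a linearization $G\in\Habb$ and invoking the uniform coercivity of Proposition \ref{r-specialMbgen}(iv). For $\Psi$ the paper merely says ``use the equivalent norm $|w|+|t|$''; your two-case split (comparing $M|t_1-t_2|$ with $(C/2)|w_1-w_2|$) correctly supplies the detail that the paper leaves implicit.
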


\begin{proof}
Suppose by contradiction $\epsilon_k \to 0$ and $w_k, v_k \in W$,   $t_k\in \RR$ for which
\[ | \Ftkr (w_k) - \Ftkr(v_k)| \ < \ \epsilon_k \ | w_k - v_k | . \]
For the  linearization $G_k = G_k(T(w_k + t_k \phi), T(v_k + t_k \phi)) \in \Hab$,
\[ | \Ftkr (w_k) - \Ftkr(v_k)|  = | (w_k - v_k) - \Pi_W \big(P(T(w_k + t_k \phi))  - P(T(v_k + t_k \phi))\big) | \]
\[ = | (w_k - v_k) - \Pi_W  G_k  T(w_k - v_k) |  > C \ | w_k - v_k | . \]
for the uniform Lipschitz bound $C$ in Proposition \ref{r-specialMbgen}(iv).

As for $\Psi$, use the equivalent norm $|||u||| = |||w + t \phi||| = |w| + |t|$.
\qed
\end{proof}
\medskip
For the rest of Section \ref{BNV}, $b=\bf b$ is taken as in Proposition \ref{r-specialMbgen}. Indeed, this is the required $b>0$ in Theorem \ref{theo:F}.

\medskip
We recall two well known facts. Let $Z$ be a real Banach space and $U \subset Z$ be an open subset. A continuous function $f: U \to Z$ is {\it compact} if the image of bounded sets is relatively compact.

\begin{theo*} [Banach-Mazur, (5.1.4) in \cite{Berger}] A continuous map $H: Z \to Z$ is a homeomorphism if and only if it is proper and a local homeomorphism.
\end{theo*}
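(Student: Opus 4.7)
The plan is to dispatch the ``only if'' direction in one line (a homeomorphism is clearly a local homeomorphism, and since its inverse is continuous, the preimage of any compact set is compact, so it is proper) and then concentrate on the converse: a proper local homeomorphism $H: Z \to Z$ on a Banach space is a homeomorphism.

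First I would show that $H$ is surjective. Since $H$ is a local homeomorphism, the image $H(Z)$ is open in $Z$. Since $H$ is proper and $Z$ is Hausdorff, $H$ is a closed map, so $H(Z)$ is also closed. The Banach space $Z$ being connected forces $H(Z) = Z$. Next, I would prove that $H$ is a covering map. For each $y \in Z$, the fiber $H^{-1}(y)$ is compact by properness and discrete by the local homeomorphism property, hence finite, say $H^{-1}(y) = \{x_1, \ldots, x_n\}$. Choose pairwise disjoint open neighborhoods $U_i \ni x_i$ on which $H$ restricts to a homeomorphism onto an open set $H(U_i)$. Set
\[
V \ = \ \Bigl( \bigcap_{i=1}^n H(U_i) \Bigr) \ \setminus \ H\Bigl( Z \setminus \bigcup_{i=1}^n U_i \Bigr).
\]
Properness ensures the second set is closed, so $V$ is an open neighborhood of $y$. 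By construction $H^{-1}(V) \subset \bigcup_i U_i$, and $V$ is evenly covered by the sheets $U_i \cap H^{-1}(V)$, proving that $H$ is a covering map.

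Finally, since $Z$ is a Banach space, it is contractible and in particular simply connected, so every connected covering space of $Z$ is homeomorphic to $Z$ via the projection. Hence the fibers of $H$ are singletons and $H$ is a bijection; being an open continuous bijection, it is a homeomorphism.

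The main obstacle is the construction of the evenly covered neighborhood $V$: one needs to guarantee both that $V$ lies in each $H(U_i)$ and that no extra preimages intrude on $H^{-1}(V)$ from outside $\bigcup U_i$. The key technical input, used precisely there, is that proper maps into Hausdorff spaces are closed, which lets one remove the closed ``bad'' set $H(Z \setminus \bigcup U_i)$ from the candidate neighborhood. Everything else is either elementary point-set topology or the standard fact that covering maps of simply connected targets are trivial.
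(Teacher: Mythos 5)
The paper does not prove this statement: it quotes it with a citation to Berger's book, (5.1.4), so there is no internal proof to compare against. Your argument is the standard covering-space proof of the Hadamard--Caccioppoli / Banach--Mazur global inversion theorem, and it is correct: surjectivity from openness, closedness and connectedness; finiteness of fibers from properness and discreteness; the explicit evenly covered neighborhood $V = \bigl(\bigcap_i H(U_i)\bigr) \setminus H\bigl(Z \setminus \bigcup_i U_i\bigr)$; and triviality of coverings over the simply connected Banach space $Z$.

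One precision worth flagging: your closing remark invokes the general principle that ``proper maps into Hausdorff spaces are closed.'' With ``proper'' meaning that preimages of compact sets are compact, this is not true for arbitrary Hausdorff codomains; it holds when the codomain is, for instance, metrizable, first-countable, locally compact, or more generally compactly generated. The step where you delete $H\bigl(Z \setminus \bigcup_i U_i\bigr)$ from the candidate neighborhood therefore relies on $Z$ being a metric space rather than merely Hausdorff. Since Banach spaces are metric, the argument is sound in the present setting, but the justification should cite metrizability (or first-countability) explicitly rather than the Hausdorff property alone.
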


\begin{theo*}[Schauder Invariance of Domain, \cite{Schauder}] Let $H:U \to Z$ be a continuous injective function of the form $H(x) = x - Q(x)$ for a compact map $Q:U \to Z$. Then $H(U) \subset B$ is an open set.
\end{theo*}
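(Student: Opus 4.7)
\smallskip
\noindent\textbf{Proof proposal.} The plan is to run the classical Leray--Schauder degree argument. Fix an arbitrary $x_0 \in U$ and set $y_0 = H(x_0)$; after translating in source and target we may assume $x_0 = 0$ and $y_0 = 0$. Choose $r>0$ with $\overline{B_r(0)} \subset U$. It then suffices to produce $\delta > 0$ with $B_\delta(0) \subset H(B_r(0))$, since this exhibits $H(U)$ as a neighborhood of each of its points.

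\smallskip
The first step is the boundary separation estimate $\delta := \inf_{x \in \partial B_r(0)} |H(x)| > 0$, and this is where the compactness of $Q$ and the injectivity of $H$ genuinely combine. If some sequence $x_n \in \partial B_r(0)$ satisfied $H(x_n) \to 0$, compactness of $Q$ would yield a subsequence with $Q(x_n) \to q$, so that $x_n = H(x_n) + Q(x_n) \to q \in \partial B_r(0)$; continuity would give $H(q) = 0 = H(0)$, and injectivity would force $q = 0$, contradicting $|q| = r$. With $\delta > 0$ secured, the Leray--Schauder degree $\deg(H, B_r(0), y)$ is well defined for every $y \in B_\delta(0)$ and, by homotopy invariance in the parameter $y$, takes a single value on that ball.

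\smallskip
The final step is to show this common value is nonzero. I would invoke the standard consequence of Leray--Schauder theory that an injective compact perturbation of the identity on a bounded open set has degree $\pm 1$ at every interior image: approximate $Q$ uniformly on $\overline{B_r(0)}$ by finite-rank maps $Q_n$, observe that $I - Q_n$ inherits injectivity on a slightly smaller ball for $n$ large (via the same compactness-plus-injectivity argument used in the separation estimate), and apply the Brouwer invariance of domain to the restriction of $I - Q_n$ to a finite-dimensional subspace containing the range of $Q_n$. I expect the main obstacle to be precisely this degree computation: converting the purely set-theoretic injectivity into a nonvanishing degree requires the finite-rank reduction to be executed carefully, with uniform control of $\|Q - Q_n\|$ on $\partial B_r(0)$ to keep the homotopy admissible. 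Once the nonvanishing is in hand, every $y \in B_\delta(0)$ is hit by $H$ on $B_r(0)\subset U$, and reversing the translation yields the openness of $H(U)$.
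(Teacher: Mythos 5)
The paper does not prove this statement: it is quoted as a classical result with a citation to Schauder, so there is no internal proof to compare against. Your overall strategy (boundary separation, local constancy of the Leray--Schauder degree in the target point, and nonvanishing of the degree) is the standard one, and your first two steps are correct: the estimate $\delta=\inf_{\partial B_r}|H|>0$ is exactly where compactness of $Q$ and injectivity of $H$ interact, and your argument for it is right.

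The gap is in the final step. Uniform approximation does not transmit injectivity: if $Q_n\to Q$ uniformly on $\overline{B_r(0)}$, the maps $I-Q_n$ need not be injective on any smaller ball, for any $n$. Already in one dimension, $H(x)=x^3$ is injective while $H_n(x)=x^3-x/n$ is not, although $\|H-H_n\|_{\overline{B_r}}\to 0$; the same phenomenon occurs for finite-rank perturbations in infinite dimensions. The compactness-plus-injectivity argument you invoke from the separation step only yields a separation modulus at each \emph{fixed} scale ($|x-x'|\ge\varepsilon$ implies $|H(x)-H(x')|\ge\eta(\varepsilon)>0$), which controls nothing at scales below $\varepsilon$, precisely where injectivity of $I-Q_n$ can fail. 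So Brouwer invariance of domain cannot be applied to the restriction of $I-Q_n$, and the degree computation as outlined does not close. The standard repair avoids approximating the injective map altogether: use Borsuk's theorem via the admissible homotopy
\[ h(t,x) \ = \ \frac{(I-Q)(x)-(I-Q)(-tx)}{1+t} \ = \ x - \frac{Q(x)-Q(-tx)}{1+t}, \qquad t\in[0,1], \]
which vanishes only when $(I-Q)(x)=(I-Q)(-tx)$, hence (by injectivity) only at $x=0\notin\partial B_r(0)$; at $t=1$ it is an odd compact perturbation of the identity, so its degree is odd, and therefore $\deg(I-Q,B_r(0),0)\ne 0$. (Alternatively, one may use the multiplication theorem for the degree together with the fact that the inverse of an injective compact perturbation of the identity is again of that form.) With either fix, the rest of your argument goes through.
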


\medskip
We introduce {\it adapted coordinates} (\cite{BP}, \cite{CT}).
\begin{prop}  \label{adaptedcoordinatesFr} Assume the hypotheses of Proposition \ref{coercivelip}. For $V  \simeq \RR$, the map
\[ \Psi: Y = W \oplus \RR \to Y = W \oplus \RR, (w, t) \mapsto (z,t) = (F^t(w), t) \]
is a homeomorphism. For $F^a(z,t) = F \circ \Psi^{-1}$, the following diagram commutes.
\[
\begin{array}{ccl}
{(w, t)}&
\stackrel{{\scriptstyle F}}{\longrightarrow}&
{(F^t(w), \Pi_V F(w+t\phi)) } \\
\\
   {\scriptstyle  \Psi^{-1}} \nwarrow & &
\nearrow{\scriptstyle F^a(z,t) = (z , h^a(z,t)) = (z,\Pi_V F(\Psi^{-1}(z,t)))}\\
    &{(z,t) } &  \\
  \end{array}
\]
\end{prop}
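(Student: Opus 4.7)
The plan is to apply the Banach--Mazur theorem to $\Psi$: it suffices to verify that $\Psi$ is continuous, a local homeomorphism, and proper. Writing
\[
\Psi(w,t) \;=\; (w,t) \;-\; \bigl(\Pi_W P(T(w+t\phi)),\,0\bigr) \;=:\; (w,t) - Q(w,t),
\]
exhibits $\Psi$ as a perturbation of the identity by a compact map $Q$, since $T \in \BY$ is compact while $P$ and $\Pi_W$ are continuous; continuity of $\Psi$ is then immediate.

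For injectivity, equality $\Psi(w_1,t_1) = \Psi(w_2,t_2)$ forces $t_1 = t_2$ from the second coordinate, and the uniform Lipschitz coercivity of $F^{t_1}$ in Proposition \ref{coercivelip} then gives $w_1 = w_2$. The local homeomorphism property follows from the Schauder Invariance of Domain theorem applied to the injective $I - Q$: it shows that $\Psi$ is open, and, being continuous and injective, $\Psi$ is therefore a local homeomorphism. The delicate step is properness, since in infinite dimensions coercivity alone does not yield compact preimages; here one must combine it with the $I-Q$ structure. Proposition \ref{coercivelip}, used with the equivalent norm $|||w+t\phi||| = |w|+|t|$, provides a constant $C > 0$ with $|||\Psi(u) - \Psi(u')||| \ge C\,|||u-u'|||$, so preimages of bounded sets are bounded. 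If $K \subset Y$ is compact and $u_n \in \Psi^{-1}(K)$, then $\{u_n\}$ is bounded; passing to subsequences, $\Psi(u_n) \to v \in K$ and, by compactness of $Q$, $Q(u_n) \to w$, whence $u_n = \Psi(u_n) + Q(u_n) \to v + w$. The limit lies in $\Psi^{-1}(K)$ by continuity of $\Psi$, so $\Psi^{-1}(K)$ is compact. The Banach--Mazur theorem now yields that $\Psi$ is a homeomorphism of $Y$ onto $Y$.

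For the commutativity of the diagram, set $F^a = F \circ \Psi^{-1}$. If $\Psi^{-1}(z,t) = (w,t)$, then by the very definition of $\Psi$ one has $z = F^t(w) = w - \Pi_W P(T(w+t\phi))$. Splitting
\[
F(w+t\phi) \;=\; (w+t\phi) - P(T(w+t\phi)) \;=\; \Pi_W F(w+t\phi) + \Pi_V F(w+t\phi),
\]
the $W$-component equals $F^t(w) = z$, while the $V$-component, under the identification $V \simeq \RR$, defines the height $h^a(z,t) := \Pi_V F(\Psi^{-1}(z,t))$. This is precisely the factorization $F^a(z,t) = (z, h^a(z,t))$ displayed in the diagram.
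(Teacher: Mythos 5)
Your proof is correct and follows essentially the same route as the paper: the decomposition $\Psi = I - Q$ with $Q$ compact, Schauder's invariance of domain for openness, and Banach--Mazur via properness. One remark: your claim that ``in infinite dimensions coercivity alone does not yield compact preimages'' is not quite right for the kind of coercivity established in Proposition~\ref{coercivelip}, which is a bilateral Lipschitz lower bound $|||\Psi(u)-\Psi(u')||| \ge C\,|||u-u'|||$. Such an estimate already gives properness directly, without invoking compactness of $Q$: if $\Psi(u_n) \to v$ then $\{\Psi(u_n)\}$ is Cauchy, the lower bound makes $\{u_n\}$ Cauchy, hence convergent by completeness of $Y$, and the limit lies in the preimage by continuity. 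This is exactly the paper's argument. Your alternative (boundedness from coercivity, then convergence of a subsequence from compactness of $Q$ and $u_n = \Psi(u_n)+Q(u_n)$) is also valid, merely redundant. The compactness of $Q$ is genuinely needed for Schauder's theorem, but not for properness.
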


\medskip
\begin{proof}  Write $y = w + t \phi = \Pi_W y + t \phi$. We apply the Banach-Mazur theorem to
\[ \Psi(w,t) =  F^t(w) + t \phi = (y -t\phi) -  \Pi_W P(Ty) + t \phi \equiv y - Q(y). \]
The properness of $\Psi$ follows from Proposition \ref{coercivelip}. Indeed, given sequences such that $\Psi (w_k,t_k) = (z_k, t_k) \to (z_{\infty},t_{\infty})$,  use the equivalent norm $|||y||| = |w| + |t|$,
\[ | w_n - w_m| + | t_n - t_m| = | \Psi (w_n,t_n) - \Psi (w_m,t_m) | \ge C (| w_n - w_m| + | t_n - t_m| )\ , \]
to conclude that the  sequences $\{(z_k, t_k)\}$ and $\{(w_k,t_k)\}$ are both Cauchy sequences, necessarily convergent. The same estimate implies the injectivity of $\Psi$. We verify local surjectivity: the continuity of the inverse again would follow from Proposition \ref{coercivelip}. By the Banach-Mazur theorem, it suffices to show that $\Psi$ is an open map, which implies local surjectivity: we use Schauder's theorem.

We show that, around each $(w,t) \in W \oplus V$, there is a closed ball $D \subset Y$  whose image $Q(D) \subset Y$ is relatively compact for $Q(y) =  \Pi_W  P(T y)$. Since $T$ is compact, and $P$ continuous (Lipschitz), $P\circ T$ is a compact  map,
and the  linear, bounded operators $\Pi_W$ preserves compactness.

Thus $\Psi$ is a genuine change of variables. The diagram now is immediate.
\qed
\end{proof}

\medskip
Thus, for each $t \in \RR$ and $z \in W$, there is a unique $w(z) \in W$ for which $\Ftl(w(z)) = z$. Said differently, the image under $F$ of a horizontal hyperplane $W^t$ intercepts the vertical line $L_z = \{ z + h \phi\}$ at a single point: each hyperplane $W^t$ contains a unique preimage of $L_z$.  We denote by $\Lambda_z = F^{-1}(L_z)$ the {\it fiber} associated with $z$. A fiber  contains a unique point on each $W^t$ and is parameterized by $\RR$: $\Lambda_z(t) = \{ y(t) = w(z,t) + t \phi, t \in \RR\}$.

The map $\Psi^{-1}$ takes each $W^t$ to itself and vertical lines are taken to fibers.
The function $t \mapsto h(z, t )=\Pi_V F(y(t)) = \langle \phi(T^\ast), F(y(t)) \rangle$ takes the point at height $t$ in the fiber $\Lambda_z$ to the point at height $h(z,t)$ of the vertical line $L_z$. Equivalently, $h^a(z, \cdot )$ takes $(z,t)$ to a point at height $h(z,t)$ of the vertical line $L_z$.

\medskip
\noindent{\bf Remark} For a given $z \in Y$, the solutions of $F(y) = z$ lie in the fiber $\Lambda_z$ which, in principle, may be searched by a (one dimensional) continuation method (\cite{S}). Numerical algorithms have been written exploiting this special feature (\cite{CT}).

\begin{prop} \label{injectivity} Under the hypotheses of Theorem \ref{theo:F} (1), $F:Y \to Y$ is injective.
\end{prop}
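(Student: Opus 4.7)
The plan is to prove injectivity by contradiction, directly exploiting the linearization hypothesis (r-H) together with item (i) of Proposition \ref{r-specialMbgen}. The additional assumption of Theorem \ref{theo:F}(1), namely $r(G(Ty,Tz)T)\neq 1$ for all $y,z\in Y$, is tailor-made to block the one way a collision $F(y_1)=F(y_2)$ can arise.

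First I would suppose for contradiction that there are $y_1\neq y_2$ in $Y$ with $F(y_1)=F(y_2)$. By (r-H) there is a linearization $H:=G(Ty_1,Ty_2)\in\Hab$ satisfying
\[
P(Ty_1)-P(Ty_2) \;=\; H(Ty_1-Ty_2) \;=\; HT(y_1-y_2).
\]
Subtracting, one obtains
\[
0 \;=\; F(y_1)-F(y_2) \;=\; (y_1-y_2) - HT(y_1-y_2) \;=\; (I-HT)(y_1-y_2),
\]
so the nonzero vector $y_1-y_2$ lies in $\ker(I-HT)$. In particular $1$ is an eigenvalue of $HT$, and it is a nonzero point of $\sigma(HT)$ (which, by compactness of $T$ and hence of $HT$, consists entirely of eigenvalues off the origin).

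Next I would invoke Proposition \ref{r-specialMbgen}(i): since $H\in\Hab$ and $1\in\sigma(HT)$ is an eigenvalue, one must have $1=r(HT)$. But this is precisely what the hypothesis of Theorem \ref{theo:F}(1) forbids, since $r(G(Ty_1,Ty_2)T)\neq 1$. The contradiction shows that $F$ is injective.

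The step that carries all the weight is the appeal to Proposition \ref{r-specialMbgen}(i); once that is in place the argument is a one-line manipulation using the linearization. There is essentially no obstacle, as the whole machinery of Section \ref{r-H} was erected precisely to allow this conclusion. Note that an alternative, slightly longer route would run through the adapted coordinates of Proposition \ref{adaptedcoordinatesFr}: since $\Psi$ is a homeomorphism, injectivity of $F$ reduces to injectivity of the scalar maps $h^a(z,\cdot):\RR\to\RR$ on each fiber, and one would again use (r-H) to produce an $H\in\Hab$ for which $(I-HT)$ annihilates a fiber increment; but the direct argument above is cleaner.
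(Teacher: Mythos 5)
Your proof is correct and follows essentially the same route as the paper's: assume a collision $F(y_1)=F(y_2)$, use the linearization from (r-H) to place $y_1-y_2$ in $\ker(I-G(Ty_1,Ty_2)T)$, then invoke Proposition \ref{r-specialMbgen}(i) to force $r(G(Ty_1,Ty_2)T)=1$, contradicting the hypothesis of Theorem \ref{theo:F}(1). If anything, your citation of (r-H) for membership in $\Hab$ is cleaner than the paper's reference to Proposition \ref{speciall}, and your explicit remark that $1$ is an eigenvalue (not merely in the spectrum) makes the appeal to Proposition \ref{r-specialMbgen}(i) airtight.
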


\begin{proof} For $y_1,y_2 \in Y$ for which $F(y_1) - F(y_2) = 0$, use linearizations,
\[ F(y_1) - F (y_2) = (y_1 - y_2) -  G(Ty_1,Ty_2) \  T(y_1-y_2) = 0 \ .\]
From Proposition \ref{speciall}, $G(Ty_1,Ty_2) \in \Hab$, so that $1 \in \sigma (G(Ty_1,Ty_2)T)$ and thus $r(y_1,y_2)=1$, by Proposition \ref{r-specialMbgen}, contradicting the hypothesis.
\qed
\end{proof}

\medskip
\noindent{\bf Proof of Theorem \ref{theo:F}(1):} From Proposition \ref{injectivity}, $F$ is  injective. Local surjectivity is again a consequence of Schauder's theorem, as in the proof of Theorem \ref{adaptedcoordinatesFr}. We consider the continuity of the (local) inverse.
In the notation of Proposition \ref{adaptedcoordinatesFr}, it suffices to show that  $F^a: W \oplus \RR \to W \oplus \RR, (z,t) \mapsto (z, h^a(z,t))$ is a local homeomorphism. The exercise in real analysis is left to the reader.
\qed

\subsection{Hypothesis (r-Conv): simple maps} \label{proofF}

As in Theorem \ref{theo:F}(2), suppose (r-H), (r-Conv) hold and ${\bf b}$ is taken such that Proposition \ref{r-specialMbgen} is true. Set $r(G(Ty_1,Ty_2)T)\equiv r(y_1,y_2)$ and $r(G(Ty,0)T)\equiv r(y)$.

\begin{prop} \label{NT} If $F(y_1) = F(y_2)$, then trichotomy holds: either $y_1 > y_2$ or $y_1 = y_2$ or $y_1 <y_2$. For  $y \in Y$, the equation $F(y_1) = y$ has at most two solutions. \end{prop}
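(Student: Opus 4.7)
The plan is to establish the trichotomy by exploiting the linearization hypothesis (r-H) together with the spectral constraints built up in Proposition \ref{r-specialMbgen}, and then to rule out a third preimage by combining (r-Conv) with the strict monotonicity of the spectral radius from Lemma \ref{smoothlambda}.

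For the trichotomy, suppose $F(y_1) = F(y_2)$. The linearization from (r-H) gives
\[ 0 \ = \ F(y_1) - F(y_2) \ = \ (y_1 - y_2) \ - \ G(Ty_1, Ty_2)\, T\, (y_1 - y_2), \]
so $y_1 - y_2$ is an eigenvector of $S := G(Ty_1, Ty_2) T$ with eigenvalue $1$. By Proposition \ref{speciall}, $S$ is r-special, and by Proposition \ref{r-specialMbgen}(i) the only way $1$ can be an eigenvalue of $S$ is that $r(S) = 1$. Property (b-2) of basicness then forces the $1$-eigenspace to be one-dimensional, generated by the quasi-interior eigenvector $\phi(S) > 0$, so $y_1 - y_2 = c\,\phi(S)$ for some $c \in \RR$. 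The sign of $c$ yields exactly the three alternatives $y_1 > y_2$, $y_1 = y_2$, $y_1 < y_2$.

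To bound the number of preimages by two, I argue by contradiction: if some $y \in Y$ had three distinct preimages, trichotomy would allow me to relabel them as $y_1 < y_2 < y_3$. By the previous paragraph both gaps $y_2 - y_1$ and $y_3 - y_2$ are positive scalar multiples of the respective principal eigenvectors, hence quasi-interior; since $T$ is ergodic, the same holds for $Ty_2 - Ty_1$ and $Ty_3 - Ty_2$. Inserting $(Ty_3, Ty_2)$ in the roles of $(y_1, z_1)$ and $(Ty_2, Ty_1)$ in the roles of $(y_2, z_2)$, all four ordering conditions of (r-Conv) are satisfied, so $G(Ty_3, Ty_2) - G(Ty_2, Ty_1)$ is strictly positive. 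Post-composing with $T$ preserves strict positivity (since $T$ sends $K\setminus\{0\}$ into the quasi-interior of $K$), and Lemma \ref{smoothlambda} then gives
\[ r(G(Ty_3, Ty_2)\, T) \ > \ r(G(Ty_2, Ty_1)\, T). \]
However, by the first part of the proof applied to each of the pairs $\{y_2, y_1\}$ and $\{y_3, y_2\}$, both spectral radii equal $1$, the desired contradiction.

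The main subtlety lies in the last step: one must check that the operator-level strict positivity of $G(Ty_3, Ty_2) - G(Ty_2, Ty_1)$ survives composition with the merely ergodic $T$ in the sense required by Lemma \ref{smoothlambda}. This is precisely where the quasi-interior image property of ergodic operators (rather than mere positivity) is needed, and it is the one place where (r-Conv) and ergodicity must be used in concert.
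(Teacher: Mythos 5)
Your proof is correct and follows essentially the same route as the paper: linearize via (r-H), invoke Proposition \ref{r-specialMbgen}(i) and Theorem \ref{theo:KRnointerior} (via basicness) to place $y_1 - y_2$ in the one-dimensional principal eigenspace for the trichotomy, then rule out a third preimage by feeding the two gaps into (r-Conv) and contradicting the equality of the two spectral radii via the strict monotonicity in Lemma \ref{smoothlambda}. You are somewhat more explicit than the paper about two points the paper glosses over — why ergodicity of $T$ turns the quasi-interior ordering of the $y_i$ into the quasi-interior ordering of the $Ty_i$ needed by (r-Conv), and why the strictly positive difference $G(Ty_3,Ty_2) - G(Ty_2,Ty_1)$, once composed with the ergodic $T$ on the right, remains strictly positive so that Lemma \ref{smoothlambda} applies; this is a useful clarification. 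One terminological slip: forming $\big(G(Ty_3,Ty_2) - G(Ty_2,Ty_1)\big)T$ is pre-composition with $T$ (or post-composition of $T$ with the difference), not post-composition with $T$; the underlying argument is unaffected.
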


\begin{proof} As in Proposition  \ref{injectivity}, if $F(y_1) - F(y_2) = 0$,  $G(Ty_1,Ty_2) \in \Hab$ and  $r(y_1,y_2)=1$. From Theorem \ref{theo:KRnointerior}, for a choice of sign, $\pm (y_1-y_2)$ is a quasi-interior eigenvector of $G(Ty_1,Ty_2)T \in \BY$: trichotomy holds.

Suppose  that  $z \in Y$ has three preimages, $z=F(y_1) = F(y_2)= F(y_3)$ for which $y_1 < y_2 < y_3$. As before,
$r(y_3,y_2) = r(G(Ty_2,Ty_1)T) = 1$. But from (r-Conv), since $ T y_1 <  T y_2 < T y_3$ (as $ T$ is r-special), we must have
$ G(Ty_3,Ty_2) >  G(Ty_2,Ty_1)$ and thus $r(y_3,y_2) > r(y_2,y_1)$, from the strict monotonicity of the spectral radius (Lemma \ref{smoothlambda}), a contradiction.
\qed
\end{proof}

\medskip
If (r-Conv) is not satisfied, the equation $F(y) = z$ for a fixed $z \in Y$ may have a number of solutions, but their graphs still sit one on top of the other.

\medskip
\noindent{\bf Proof of Theorem \ref{theo:F} (2):} The homeomorphism $\Psi: W \oplus V \to W \oplus V$ in Proposition \ref{adaptedcoordinatesFr} preserves the vertical component: it suffices to prove that $F^a: W \oplus V \to W \oplus V$ is a simple map. This is immediate from Proposition \ref{NT}: notice that all preimages of a point $z \in Y$ necessarily belong to $F^a(L_z)$, where $L_z \subset Y$ is the vertical line  through $z$. \qed

\medskip
A finer geometric description of the map $F:Y \to Y$ follows from the asymptotic behavior of the images of its fibers, as in Theorem \ref{theo:BNV2} below.

\subsection{Nemitskii maps: proof of Theorem \ref{theo:BNV}} \label{Nemitskii}

At this point, standard techniques obtain from Theorem \ref{theo:F} (2) an extension of Theorem \ref{theo:BNV}. We resume the proof of Theorem \ref{theo:F} in Section \ref{properness}.

Let $Y$ be a real, separable, reflexive Banach space {\it of functions} containing the constants, for which $K$, the cone of pointwise non-negative functions, is normal and generating.  We assume that multiplication $M_q$ by bounded functions $q: \RR\to \RR$ be bounded, $\|M_q\| \le \sup |q|$. As in Theorem \ref{theo:F}, $T  \in \BY$  is an r-special operator for which $r(T)=1, T \phi = \phi >0, T^\ast \phi^\ast = \phi^\ast >0, \langle \phi^\ast,  \phi \rangle = 1$.

\begin{theo} \label{theo:BNV2}  Let $P(u)= f(u)$ for a strictly convex function $f: \RR \to \RR$ such that
\[ 0 < a = \inf_{r \ne s} q (r,s) < 1 < 1 + b = \sup_{r \ne s} q (r,s) \ . \]
Then, for some $ b > 0$, $F = I - PT: Y \to Y$ folds downwards with respect to $\phi$.
\end{theo}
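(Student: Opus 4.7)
The plan is to apply Theorem \ref{theo:F}(2) to obtain that $F$ is simple, and then compute the asymptotic behavior of the height function along each fiber by hand to upgrade ``simple'' to ``folds downwards''.

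First I verify (r-H) and (r-Conv) for the Nemitskii map $P(u)=f(u)$. The natural linearization is $G(y,z) = M_{q(y,z)}$, multiplication by the pointwise difference quotient
\[ q(y,z)(x) = \frac{f(y(x))-f(z(x))}{y(x)-z(x)} \in [a, 1+b], \]
extended by $f'(z(x))$ on the diagonal; it is bounded on $Y$ by $1+b$ thanks to the assumption $\|M_q\| \le \sup|q|$. I split $q = \tilde a + \tilde b$ with $\tilde a = \min(q,1) \in [a,1]$ and $\tilde b = (q-1)_+ \in [0,b]$, take $S = aT$ (r-special with $r(S)=a>0$), and check pointwise that $S \le M_{\tilde a}T \le T$ and $0 \le M_{\tilde b}$ with $\|M_{\tilde b}\| \le b$, so $G(y,z) \in \Hab$ and (r-H) holds. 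For (r-Conv), strict convexity of $f$ makes $q(r,s)$ strictly increasing in each variable, so under the monotonicity hypotheses on $(y_i,z_i)$ the multiplier $q(y_1,z_1) - q(y_2,z_2)$ is pointwise nonnegative and nontrivial; composing with the ergodic $T$ produces the strict operator inequality on $K\setminus\{0\}$ required by the strict spectral radius monotonicity in Lemma \ref{smoothlambda}, which is precisely how (r-Conv) enters the trichotomy argument of Proposition \ref{NT}. For $b$ small enough, Theorem \ref{theo:F}(2) then yields that $F$ is a simple map.

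It remains to show that the height function along each fiber $y(z,t)=w(z,t)+t\phi$,
\[ h(z,t) = \langle \phi^\ast, F(y(z,t))\rangle = t - \langle \phi^\ast, f(Tw(z,t) + t\phi)\rangle, \]
satisfies $h(z,t)\to -\infty$ as $t\to\pm\infty$, using $T\phi=\phi$, the $T$-invariance of $W$ from Corollary \ref{twoprojections}, and $\langle\phi^\ast,\phi\rangle=1$, $\langle\phi^\ast,w\rangle=0$. The hypotheses on the difference quotient force $f(s)/s\to 1+b$ as $s\to+\infty$ and $f(s)/s\to a$ as $s\to-\infty$. Hence, provided one can show $w(z,t)/t\to 0$ as $|t|\to\infty$, passing to the limit inside $\langle\phi^\ast,\cdot\rangle$ (legitimate because $T$ is compact, so $Tw(z,t)/t \to 0$ strongly in $Y$, while $f$ is asymptotically piecewise linear with a sublinear remainder) yields
\[ \frac{h(z,t)}{t}\to 1-(1+b) = -b\ (t\to+\infty), \quad \frac{h(z,t)}{t}\to 1-a > 0\ (t\to-\infty), \]
so that $h(z,t)\to -\infty$ in both regimes and $F$ folds downwards with respect to $\phi$.

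The delicate step is the sub-lemma $w(z,t)/t\to 0$, which I expect to be the main obstacle and to handle by a rescaling and weak-limit argument in the spirit of Proposition \ref{r-specialMbgen}. Proposition \ref{coercivelip}, applied to $F^t(w(z,t)) - F^t(0) = z + \Pi_W f(t\phi)$ together with the Lipschitz bound on $f$, gives $|w(z,t)| \le C(|z| + |t|)$, so $\omega_n := w(z,t_n)/t_n$ is bounded. Extracting a weakly convergent subsequence $\omega_n \rightharpoonup \omega \in W$ and dividing the defining identity $w - \Pi_W f(Tw + t\phi) = z$ by $t_n$, the asymptotic linearity of $f$ together with the compactness of $T$ (which upgrades weak convergence to strong convergence on $TW$) produces the limit equation $\omega = (1+b)T\omega$ for $t_n\to+\infty$ and $\omega = aT\omega$ for $t_n\to-\infty$. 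Since $r(T|_W) < r(T) = 1$ by simplicity of the basic eigenvalue of $T$, neither $(1+b)^{-1}$ nor $a^{-1}$ lies in $\sigma(T|_W)$ once $b$ is small, forcing $\omega = 0$; the strong convergence $\omega_n\to 0$ then follows by reading $\omega_n$ off from the rescaled identity. Carefully justifying the limit in the nonlinear term — by splitting $f$ into its asymptotically linear parts plus a sublinear remainder that vanishes after normalization — is the main technical point in this final step.
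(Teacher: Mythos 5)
The first half of your argument --- the verification of (r-H) and (r-Conv) via the multiplication operators $M_{q(y,z)}$ and the splitting $q=\min(q,1)+(q-1)_+$ with $S=aT$ --- is exactly the paper's argument and is fine. The second half, however, has a genuine gap, and it is concentrated precisely where you flag ``the main technical point.'' Two problems. First, your limit equation $\omega=(1+b)T\omega$ is not what the rescaling produces: writing $Ty(t_n)/t_n\to T\omega+\phi$, the asymptotic slope of $f$ is $1+b$ only where $T\omega+\phi>0$ and is $a$ where $T\omega+\phi<0$; since $T\omega$ need not be small and $\phi$ is only quasi-interior, you cannot rule out the second region, so the limit equation is $\omega=\Pi_W M_{c}(T\omega+\phi)$ with a sign-dependent multiplier $c(x)\in\{a,1+b\}$, and the clean spectral exclusion argument no longer applies. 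Second, both this passage to the limit and the final step $\langle\phi^\ast,f(Ty(t))\rangle/t\to(1+b)$ require pointwise a.e.\ convergence plus a dominated-convergence interchange with $\phi^\ast$; the hypotheses of the theorem provide neither an integral representation of $\phi^\ast$ nor a dominating function (compare Theorem \ref{theo:Nemitskii2}, where the paper must explicitly \emph{add} the hypothesis $\phi_m\in L^1$ to run exactly this kind of argument).

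The paper avoids all of this with a one-sided supporting-line estimate that requires no asymptotic control of $w(z,t)$ whatsoever. Since $f$ is convex with $\inf q=a<1<1+b=\sup q$, there are affine functions $\ell_\pm(s)=\alpha_\pm s+\beta_\pm$ with $\alpha_-<1<\alpha_+$ and $f\ge\ell_\pm$ \emph{everywhere} (tangent lines at points where the slope straddles $1$). Then, using $T^\ast\phi^\ast=\phi^\ast$ and $\langle\phi^\ast,w(t)\rangle=0$, one gets $\langle\phi^\ast,Ty(t)\rangle=t$ exactly, so
$h(t)=t-\langle\phi^\ast,f(Ty(t))\rangle\le(1-\alpha_+)t-\langle\phi^\ast,\beta_+\rangle$ for all $t$, which forces $h\to-\infty$ as $t\to+\infty$, and the bound with $\ell_-$ gives $h\to-\infty$ as $t\to-\infty$. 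I recommend replacing your sub-lemma and limit computation with this estimate; as written, your proof of the downward fold does not close.
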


Here, the Newton quotient $q(r,s)$ is $q(r,s) = (q(r) - q(s)) / (r-s)$ for $r \ne s$.

\begin{proof}
We check the hypotheses of Theorem \ref{theo:F} (2). As in \cite{AP}, use linearizations  $G(u,v)=M_{q(u,v)}$, where $M_{q(u,v)}$ is the multiplication operator  by the bounded function
$q(u, v) (x)\ = \
q(u(x),v(x))$ for $u(x) \neq v(x)$ and $q(u,v)(x)= a$ otherwise.

\smallskip
Clearly $ P, G(u,v): Y \to Y$ are well defined and continuous, as $f$ is Lipschitz. Moreover, $M_{q(u,v)} = G(u,v) \in \Hab$ for $ S = a  T$, $R = 1+ b$. Indeed,
\[ q(u,v) = \alpha + \beta = \min \{q(u,v), 1\} + (q(u,v) - \min \{q(u,v), 1\}) ,  M_{q(u,v)} = M_\alpha + M_\beta \ , \]
and then $ S= a  T \le M_\alpha  T \le  T$,
$\| M_\beta \| \le b$. This settles (r-H).

\medskip
For (r-Conv), consider pointwise estimates for $x \in \Omega$, the domain of the functions in $Y$: as $f$ is strictly convex,
for $y_1, y_2, z_1, z_2 \in Y$ as in (m-Conv),
\[ \frac{f(y_1(x)) - f(z_1(x))}{y_1(x)- z_1(x)} > \frac{f(y_2(x)) - f(z_2(x))}{y_2(x)- z_2(x)} \ . \]
Thus $ G(y_1,z_1)  -  G(y_2,z_2) \in \BY$ is  strictly positive.
Now use Theorem \ref{theo:F} (2) to conclude that $F$ is a simple map.

 From the hypotheses on $f$, there are lines
$\ell_\pm(t) = \alpha_\pm t + \beta_\pm$ such that $f(t) > \ell_\pm(t)$ with $\alpha_- < 1 = r( T) < \alpha_+$ (\cite{STZ}, for example).

We compute the height of the images of the fiber $y(t) = w(t) + t \phi, w(t) \in W$:
\[ h(F(y(t))) =\langle \phi^\ast, F(y(t))\rangle = \langle\phi^\ast, w(t) + t \phi - P( T y(t))\rangle = t - \langle \phi^\ast,f(  T y(t))\rangle \ . \]
\[ < \ t -  \langle\phi^\ast,\alpha_+ T (w(t) + t \phi) + \beta _+\rangle = (1 - \alpha_+) t - \langle\phi^\ast, \beta_+ \rangle , \]
(recall $\beta_+ \in Y$). As $t \to \infty$, we have $h(F(y(t))) \to -\infty$ since $1 - \alpha_+<0$. A similar estimate using the line $\ell_-$ obtains $h(F(y(t))) \to -\infty$ also for $t \to -\infty$.

By definition, the image $F(y(t))$ of the fiber $y(t)$  lies in a vertical line of $Y$. As $t \to \pm \infty$,  $h(F(y(t))) \to - \infty$: $F$ folds downwards.
\qed
\end{proof}

\medskip
The set $\Hab$ is not necessary in this case: the implicit set in the arguments  is
\[ \Qab = \{ M_q\in{\cal B}(Y): \ q  \in L^\infty(\Omega) \ , \  a \le q(x) \le 1+ b \ \ {\rm a.e.} \} \ , \]
which is  convex and sequentially compact with respect to the weak operator topology of ${\cal B}(Y)$ from the Banach-Alaoglu Theorem. Clearly, $\Qab \subset \Hab$.

\medskip
\noindent{\bf Proof of Theorem \ref{theo:BNV}}

The space $Y= L^n(\Omega)$ clearly satisfies the hypotheses of Theorem \ref{theo:BNV2}, as does $K \subset Y$, the cone of nonnegative functions (notice that nonnegative functions in $X=W^{2,n}(\Omega) \cap W^{1,n}_0 (\Omega)$ do not form a normal cone). Without loss, $\lambda_m= \lambda_m(\Lb) = 0$ (simply replace $\Lb$ by $\Lb - \lambda_m$). Denote by $\CC^+ = \{ z \in \CC , \Re z > 0 \}$ the open right half-plane: by hypothesis, $\sigma(\Lb) \subset \CC^+ \cup \{0\}$.

Recall $X = W^{2,n} \cap W^{1,n}_0 \subset C^0 $. The operator $\Lb +\gamma I: X \to Y$ is invertible for $\gamma>0$. Denote the compact inclusion of $X$ in $Y$ by $\iota: X \to Y$.  Given the conformal mapping $z \mapsto \Gamma_\gamma(z) = \gamma /(z + \gamma)$, $\gamma >0$, define the operator $T = \iota \circ \gamma (\Lb +\gamma I)^{-1} \in \BY$. As $\Gamma_\gamma$ takes $\CC^+$ to the open disk $D= \{ |z - 1/2| < 1/2\}$, $\sigma(T) \subset D \cup \{1\}$ (recall the Dunford-Schwartz functional calculus for closed operators $L : \Dom (L) \to Y$ \cite{DS}, p. 599).
Also, $\phi_m > 0 $ is an eigenvector of both $\Lb$ and $T$ for eigenvalues $0$ and  $r(T)=1$ respectively.

The hypotheses of Theorem \ref{theo:F} for $T$ are satisfied: from well known properties of the operator $\Lb: X \to Y$ (\cite{BNV}, \cite{B}), $T \in \BY$ is r-special. Also, $r( T)=1, T \phi_m = \phi_m \in Y, \phi_m>0$ and similar hypotheses hold for $\phi^\ast_m$, since $T$ is r-special.

\medskip
The map $\Pb: Y \to Y, \ \Pb(u) = \fb(u)$ is associated with  a strictly convex function $\fb: \RR \to \RR$ for which
$ \ab = \inf q_{\fb}(x,y) <  0 <  \bb = \sup q_{\fb} (x,y)$.
Set $P = I+ \Pb/\gamma$, so that $P(y) = f(y)$ for $f = 1 + \fb/\gamma$ and
\[ 0 < a =  1 + \ab/\gamma < r(\tilde T)= 1 < 1+ b = 1 + \bb/\gamma \ . \]

As in the proof of Theorem \ref{theo:BNV2}, $P$ satisfies (r-H) and (r-Conv). Now take $\gamma$ so large that $b >0$ complies with the hypothesis of Theorem \ref{theo:BNV2}. Thus $F= I - PT:Y \to Y$ folds downwards, and the same must happen to $\Fb= \Lb - \Pb: X \to Y$: Theorem \ref{theo:BNV} holds for the appropriate $\bb = \gamma b$.
\qed

\medskip
We consider extensions of Theorem \ref{theo:BNV}, by replacing $\Lb$ with an appropriate $g(\Lb)$  which still induces an r-special operators $T$ through the conformal mapping $\Gamma$. We assume $\lambda_m=0$.

As a simple example, for $\Lb$ as above, powers $\Lb^k$, for some $k \in \NN$ still yield r-special operators, provided $\sigma(\Lb^k)$ lies in the closed half-plane.
Clearly, this is the case if $\sigma(\Lb) \subset [0, \infty]$. No other eigenvalue of $\Lb^k$ is sent to zero and thus $T = \Gamma_\gamma(\Lb^k)$ still has a {\it simple} spectral radius 1. Compactness and preservation of the cone $K$ are immediate.

One might consider more complicated functions $g$, at the cost of additional hypothesis (a decay rate of the resolvent of $L$ would be natural). For example, sectorial properties of  $\sigma(\Lb)$ might lead to fractional powers for which the associated $T$ is still r-special. For a related situation, see Proposition \ref{prop:m-special}.

\subsection{Consequences of properness: Theorem \ref{theo:F} (3)} \label{section:cons}

The function below is already given in adapted coordinates,
\[ (x,y) \mapsto (x, (1- xy)y )  \ . \]
Vertical lines in the right (resp. left) half-plane fold downwards (resp. upward), the vertical axis stays fixed. This map is not proper. Under properness, the  height functions of the image of all vertical lines  have the same asymptotic behavior, as shown in the proposition below. (Proposition 10 of \cite{STZ}, in the current notation).

\begin{prop} \label{prop:STZ} Let $Z$ be a real Banach space. A proper map $A : Z \oplus \RR \to Z \oplus \RR$ of the form $A(z,t) = \big(z, \alpha (z,t)\big)$ for which no point has more than two preimages is either a homeomorphism or folds vertically.
\end{prop}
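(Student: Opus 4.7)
The plan is to reduce the dichotomy to a connectedness argument on $Z$. Since $A$ preserves the first factor, preimages under $A$ lie in vertical lines $\{z\}\times\RR$, and the two-preimage hypothesis translates into: for each $z\in Z$, the continuous slice $\alpha_z := \alpha(z,\cdot):\RR\to\RR$ attains every value at most twice. Properness forces $A$ to be a closed map between metric spaces; thus $\alpha_z(\RR)$ is closed in $\RR$, and also connected and unbounded (otherwise its closure would be compact and $A^{-1}(\{z\}\times\overline{\alpha_z(\RR)})$ would contain the non-compact set $\{z\}\times\RR$). So $\alpha_z(\RR)$ is either $\RR$, $[m_z,\infty)$, or $(-\infty,M_z]$; combined with the two-preimage bound, an elementary case analysis forces $\alpha_z$ to be either (i)~a strictly monotonic homeomorphism of $\RR$, or (ii)~strictly unimodal with $\alpha_z(t)\to\pm\infty$ as $t\to\pm\infty$ (the sign determined by whether the extremum is a minimum or a maximum). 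Let $U:=\{z\in Z:\alpha_z\text{ is monotonic}\}$. My goal is to show $U$ is clopen in $Z$; as any Banach space is connected, this yields $U=Z$ or $U=\emptyset$, which is precisely the asserted dichotomy.

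Closedness of $U$ is the easier half. If $z_n\to z_0$ with $z_n\in U$ but $z_0\notin U$, say $\alpha_{z_0}$ has maximum $M$: fixing $h>M$, each $\alpha_{z_n}$ is monotonic onto $\RR$ and admits a unique preimage $s_n$ of $h$. Properness applied to the compact set $(\{z_n:n\in\NN\}\cup\{z_0\})\times\{h\}$ forces $\{s_n\}$ to be bounded, and any subsequential limit $s_\infty$ satisfies $\alpha_{z_0}(s_\infty)=h>M$, contradicting the maximality of $M$; the minimum case is symmetric.

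The main obstacle is openness of $U$, since unimodal fibers could a priori accumulate on a monotonic one with the extremum escaping to infinity. Suppose $z_0\in U$ with $\alpha_{z_0}$, say, increasing, and $z_n\to z_0$ with $z_n\notin U$; after passing to a subsequence, $\alpha_{z_n}$ has, say, a maximum $e_n$ at $t_n$. If $\{t_n\}$ admits a bounded subsequence converging to $t_*$, passing to the limit in $\alpha_{z_n}(t)\le e_n$ at each fixed $t$, and using joint continuity of $\alpha$, shows that $t_*$ is a global maximum of $\alpha_{z_0}$, contradicting its monotonicity. The delicate case is $|t_n|\to\infty$. Then $(z_n,t_n)$ has no convergent subsequence, so by properness neither does $A(z_n,t_n)=(z_n,e_n)$, whence $|e_n|\to\infty$; the bound $e_n\ge\alpha_{z_n}(0)\to\alpha_{z_0}(0)$ rules out $e_n\to-\infty$, so $e_n\to+\infty$. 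Fixing any level $L>\alpha_{z_0}(0)$ in the image of $\alpha_{z_0}$, for large $n$ the unimodal $\alpha_{z_n}$ attains $L$ at exactly two points $s_n^-<t_n<s_n^+$, and the one lying on the same side as $t_n\to\pm\infty$ is unbounded, contradicting the compactness of $A^{-1}((\{z_n\}\cup\{z_0\})\times\{L\})$. Once $U$ is clopen, the conclusion is immediate: when $U=Z$, $A$ is a continuous proper bijection, hence closed, hence a homeomorphism; when $U=\emptyset$, every vertical slice is unimodal with divergent limits, i.e.\ $A$ folds vertically.
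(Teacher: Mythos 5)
Your strategy---partition $Z$ by slice type, show the set $U$ of monotone slices is clopen, and invoke connectedness of the Banach space---is sound, and the individual properness arguments you give for closedness and openness of $U$ are correct. The gap is in the last sentence: when $U=\emptyset$, you conclude that ``every vertical slice is unimodal with divergent limits, i.e.\ $A$ folds vertically,'' but the paper's definition of \emph{folds vertically} is the disjunction ``folds downwards or folds upwards,'' each of which is a \emph{uniform} condition: for \emph{all} $z$, $\alpha_z(t)\to-\infty$ as $t\to\pm\infty$ (respectively $+\infty$). Your argument does not rule out a configuration in which some slices have a global minimum (fold upward) and others a global maximum (fold downward); thus $U\in\{\emptyset,Z\}$ is not ``precisely the asserted dichotomy,'' as you claim.

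To close the gap you must also show that $U_+=\{z:\alpha_z\text{ attains a global minimum}\}$ and $U_-=\{z:\alpha_z\text{ attains a global maximum}\}$ are clopen, so that once $U=\emptyset$ connectedness forces one of them to be all of $Z$. This can be done by a variant of your own argument. Suppose $z_n\in U_-$ and $z_n\to z_0\in U_+$ with $m_0:=\min\alpha_{z_0}$. Properness applied to the compact set $(\{z_n\}\cup\{z_0\})\times[m_0,m_0+1]$ yields a $T_0$ such that $\alpha_{z_n}(t)\notin[m_0,m_0+1]$ whenever $|t|>T_0$, for all $n$. Enlarge $T\ge T_0$ so that $\alpha_{z_0}(\pm T)>m_0+2$; by continuity $\alpha_{z_n}(\pm T)>m_0+1$ for $n$ large. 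Since $z_n\in U_-$ means $\alpha_{z_n}(t)\to-\infty$ as $t\to+\infty$, the intermediate value theorem produces some $t>T\ge T_0$ with $\alpha_{z_n}(t)\in[m_0,m_0+1]$, a contradiction. By the symmetric argument $U_+$ and $U_-$ are both closed, hence (as they partition $Z$ when $U=\emptyset$) both open, hence clopen; with this addition your proof is complete.
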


\medskip
\noindent{\bf Proof of Theorem \ref{theo:F} (3):} Combine Theorem \ref{theo:F} (2) and Proposition \ref{prop:STZ}. \qed

\medskip
For a fiber $\Lambda= \{u(t), t \in \RR\}$, asymptotic information of the height $h(t)$ of its image $F(u(t)) = z + h(t) \phi$ determines if $F$ is a homeomorphism or a fold, as well as the type of the fold. By Proposition \ref{prop:STZ}, if $F$ is proper, such limits are independent of the fiber. One approach to the asymptotic limits has been used at the end of the proof of Theorem \ref{theo:BNV2}: the hypothesis on $f$ is so stringent that the images of fibers {\it and} vertical lines in the domain have the same asymptotic behavior. We present an alternative approach in Section \ref{properness}.

\subsection{Hypotheses (r-Hs), (r-Convs) and (r-Crit): smoothness} \label{properness}

When $F$ is a $C^1$ map we use different tools.
From Proposition \ref{prop:STZ}, the proof of Theorem \ref{theo:F} (4) is complete once we establish that $F$ is proper (Proposition \ref{properl}) and $F$ folds downwards (Proposition \ref{index} (3)).

\medskip
Throughout this section, we assume the hypotheses of Theorem \ref{theo:F}(4): (r-Hs), (r-Convs) and (r-Crit) and the usual ${\bf b} >0$ for which Proposition \ref{r-specialMbgen} holds. For $F(y) = y - P(Ty): Y \to Y$, we have $DF(y) z = z - DP (Ty) T $. From (r-Crit), there is $y_c \in Y$  for which $DF(y_c)$ is not invertible. Without loss,
\[ y_c=0 \ \ \quad F(0)=0 \] (compose $F$ with translations in domain and counterdomain --- hypotheses (r-Hs) and (r-Convs) are invariant under these operations).

We take the linearization $G(y,z) \in {\cal B}(Y)$ to be
\[
P(y)- P(z)  = \int_0^1 D P(z+s ( y-z)) \ ds \  (y-z) =:  G(y,z)(y-z) \ ,
\]
so that $G(y,y)= J(y) = DP(y)$. Set $G(Ty)=G(Ty,0)$. As $F(0)=0$ we have $P(Ty)= G(Ty)Ty$ with $G(Ty)T \in \BY$. Also, $F(y) = y - G(Ty)Ty$.

\begin{prop} \label{prop:convs} For $y,z \in Y$, $J(y), G(y,z) \in \Hab$. The maps \[ y,z \mapsto G(y,z) \in \BY \quad \hbox{and} \quad y,z \mapsto r(G(y,z)T) \in \RR \] are continuous. The linearizations $G(y,z)$ satisfy (r-H) and (r-Conv).
\end{prop}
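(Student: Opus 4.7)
The plan is to represent $G(y,z) = \int_0^1 DP\bigl((1-s)z + sy\bigr)\,ds$ as a Bochner integral throughout, transferring properties of the Jacobians $DP$ (granted by (r-Hs) and (r-Convs)) to the linearizations. Set $u_i(s) = (1-s)z_i + s y_i$.

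First I would verify that $G(y,z) \in \Hab$, from which $J(y) = G(y,y) \in \Hab$ and (r-H) both follow. By (r-Hs) each integrand $DP(u) \in \Hab$. The set $\Hab$ is convex by direct inspection (the conditions $\|A\|\le R$, $\|B\|\le b$, $S \le AT \le T$, $B \ge 0$ on the decomposition $H = A+B$ are preserved by convex combinations), and by Lemma \ref{Gab} it is compact, hence closed, in the weak operator topology; since that topology is coarser than the norm topology, $\Hab$ is also closed in operator norm. The Bochner integral is a norm limit of Riemann sums, each a convex combination in $\Hab$, so $G(y,z) \in \Hab$. Continuity $(y,z) \mapsto G(y,z) \in \BY$ is immediate from
\[
\|G(y_1,z_1) - G(y_2,z_2)\| \;\le\; \int_0^1 \|DP(u_1(s)) - DP(u_2(s))\|\,ds
\]
and dominated convergence, using the uniform bound $\|DP(u)\| \le R$ and the norm continuity of $DP: Y \to \BY$ (from $P \in C^1$). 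By Proposition \ref{speciall}, $G(y,z)T$ is r-special, so Lemma \ref{smoothlambda} yields continuous (in fact, locally real-analytic) dependence of $r(G(y,z)T)$. Hypothesis (r-H) now follows: $P(y)-P(z) = G(y,z)(y-z)$ with the uniform bound $\|G(y,z)\| \le R$ shows $P$ is Lipschitz, and the linearizations $G(y,z)$ lie in $\Hab$.

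The main step is (r-Conv). Write
\[
u_1(s) - u_2(s) \;=\; (1-s)(z_1 - z_2) + s(y_1 - y_2).
\]
If $y_1 > y_2$ (quasi-interior) and $z_1 \ge z_2$, then for every $s \in (0,1]$ and every $k^* \in K^*\setminus\{0\}$,
\[
\langle k^*, u_1(s)-u_2(s)\rangle \;\ge\; s\,\langle k^*, y_1-y_2\rangle \;>\; 0,
\]
so $u_1(s) > u_2(s)$ for every $s \in (0,1]$; the case $z_1 > z_2$ is symmetric, giving $u_1(s) > u_2(s)$ for every $s \in [0,1)$. By (r-Convs), $DP(u_1(s)) - DP(u_2(s))$ is strictly positive for a.e.\ $s$. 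Fix $k \in K\setminus\{0\}$; then $s \mapsto [DP(u_1(s)) - DP(u_2(s))]k$ is a continuous, $K$-valued curve lying in $K\setminus\{0\}$ on a subinterval of full measure. Pairing with the strictly positive functional $\phi^* = \phi(T^*) > 0$ yields
\[
\bigl\langle \phi^*,\, [G(y_1,z_1)-G(y_2,z_2)]k\bigr\rangle \;=\; \int_0^1 \bigl\langle \phi^*,\, [DP(u_1(s))-DP(u_2(s))]k\bigr\rangle\,ds \;>\; 0,
\]
since the non-negative continuous integrand is strictly positive on a set of positive measure. Hence $[G(y_1,z_1)-G(y_2,z_2)]k \in K\setminus\{0\}$, proving (r-Conv).

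The main obstacle is the last step: integrals of a.e.\ nonzero $K$-valued curves need not remain in $K\setminus\{0\}$ without extra positivity information, and the argument is saved precisely by using $\phi^*$ as a strictly positive functional that detects nonvanishing pointwise and hence in the integral.
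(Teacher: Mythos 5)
Your proof is correct and follows the same route as the paper's (which is only a few lines): membership of $G(y,z)$ in $\Hab$ via convexity and closedness of $\Hab$ from Lemma \ref{Gab}, continuity of $G$ from $P\in C^1$, and continuity of the spectral radius from Lemma \ref{smoothlambda}. The one place you add genuine content is the verification of (r-Conv), which the paper simply asserts; your argument there — $u_1(s)>u_2(s)$ for $s$ in a full-measure subinterval, apply (r-Convs) pointwise, and detect nonvanishing of the integral by pairing with the strictly positive functional $\phi^\ast$ — is a correct and welcome filling of that gap.
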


\begin{proof} From (r-Hs), $DP(y), DP(z) \in \Hab$. From Lemma \ref{Gab}, convex combinations of Jacobians, as well as integrals along a segment, also belong to $\Hab$ and satisfy (r-H) and (r-Conv). The map $y,z, \mapsto G(y,z) \in \BY$ is continuous as $P:Y\to Y$ is $C^1$. Lemma \ref{smoothlambda} implies the continuity of $r(G(y,z)T)$.
\qed
\end{proof}

\medskip

The {\it critical set} $\Cr$ of $F$ consists of the points in $Y$ for which $DF$ is not invertible. From  Propositions \ref{speciall} and \ref{r-specialMbgen},
\[ \Cr  = \{ y \in Y \ | \ 0 \in \sigma(DF(y))\} = \{ y \in Y \ | \  r(y) = r(J(T y)T) =  1 \} \ . \]

Our next step is Proposition \ref{(M-V)}: critical points are abundant. We obtain (exactly) one for each line  $\{y + s \psi, t \in \RR, \psi > 0 \}$. Set $B_\delta^Y  = \{ z \in Y, | z | \le \delta\}$.

\begin{lemma} \label{Hahn-Banach} Let $y, z, \psi \in Y, \psi >0, \delta > 0$. Then there are $s^\ast \in \RR$, $k \in K$ and $z_\delta \in B_\delta^Y$ such that $y + s^\ast \psi = z + z_\delta + k$.
\end{lemma}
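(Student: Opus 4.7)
Setting $w = y - z$, the conclusion is equivalent to showing that the affine line
\[ \ell \ = \ \{ w + s\psi \ : \ s \in \RR \} \ \subset \ Y \]
meets the convex set $A = K + B_\delta^Y$. The whole proof is a one-shot geometric Hahn--Banach argument, exploiting that the hypothesis ``$\psi > 0$'' (quasi-interior) is precisely the dual formulation of what we need.

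I would argue by contradiction. Suppose $\ell \cap A = \emptyset$. Then $\ell$ is also disjoint from the \emph{open} convex set $A^\circ \supset \interior B_\delta^Y$, which is nonempty. By the first geometric form of the Hahn--Banach theorem (valid in any real Banach space), there exist $\phi \in Y^\ast \setminus \{0\}$ and $\alpha \in \RR$ with
\[ \langle \phi, a \rangle \ < \ \alpha \ \le \ \langle \phi, w + s\psi \rangle \qquad \forall \, a \in A^\circ, \ \forall \, s \in \RR. \]
Since the right-hand side is linear in $s$ and bounded below uniformly in $s \in \RR$, we must have $\langle \phi, \psi \rangle = 0$, and hence $\alpha \le \langle \phi, w \rangle$.

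Next I would extract the sign of $\phi$ on $K$ and a quantitative bound from $B_\delta^Y$. Plugging $a = b$ with $b$ in the open ball of radius $\delta$ yields $\delta \| \phi \| \le \alpha$, so in particular $\alpha > 0$. Plugging $a = tk$ with $k \in K$ and $t > 0$ gives $t \langle \phi, k\rangle < \alpha$ for every $t > 0$, which forces $\langle \phi, k \rangle \le 0$ for all $k \in K$, i.e., $-\phi \in K^\ast \setminus \{0\}$. But then $\langle -\phi, \psi \rangle = 0$ with $-\phi \in K^\ast \setminus \{0\}$, contradicting the assumption that $\psi$ is a quasi-interior point of $K$.

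The only potential obstacle is checking the applicability of the separation theorem, which reduces to verifying that $A$ has nonempty interior (it contains $\interior B_\delta^Y$) and that the affine set $\ell$ is convex; both are immediate. The crucial structural input is that ``quasi-interior'' is exactly the condition ensuring no nonzero positive functional annihilates $\psi$, which is the dual statement that translates along $\psi$ eventually reach every neighborhood of $K$.
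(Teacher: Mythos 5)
Your proof is correct and follows essentially the same route as the paper's: separate the line $\{w+s\psi\}$ from the convex set $K+B_\delta^Y$ (the paper's ``closed convex span'' $K_\delta$ is this same Minkowski sum) by Hahn--Banach, deduce that the separating functional, up to sign, lies in $K^\ast\setminus\{0\}$ and annihilates $\psi$, and contradict $\psi>0$. The only cosmetic slip is that $tk$ need not lie in $A^\circ$, so the inequality $t\langle\phi,k\rangle<\alpha$ should be weakened to $\le\alpha$ by passing to the closure, which does not affect the conclusion $\langle\phi,k\rangle\le 0$.
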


Let $z \in L^2(0,1)$ not bounded from below, $y=0, \psi \equiv 1$. Then no constant $s$ gives $0 < s \psi = z+k $: the small perturbation $z_\delta$ is needed.

\medskip
\begin{proof}
If one cannot obtain $- z  + y + s^\ast \psi =  k +   z_\delta$, the  line $ L= \{ -z +y + s \psi, s \in \RR\}$ does not intercept $K_\delta$, the  (closed) convex span of $K$ and  $ B_\delta^Y$,  a set having $\psi \in K$ in its interior. By the Hahn-Banach theorem, $L $ and $K_\delta$ are separated by $H = \{ z \in Y,\  \langle \ell ,z\rangle  = c\}$, the level $c \in \RR$ of a functional $\ell \in Y^\ast \setminus\{0\}$: $\ell|_{{L }} \le c \le \ell|_{{K_\delta}}$.

\smallskip
We first show that $\ell \in K^\ast\setminus \{0\}$.  Indeed, if  there is $k\in K$ such that $\langle \ell, k\rangle<0$, then  for large  positive $t$ (so that $t k \in K$),  $\langle \ell, t\, k  \rangle<c $, a contradiction. Also, since $\ell(y + s \psi)= \ell(y) + s \ell(\psi) \le c$ for all $s \in \RR$, we must have $\ell(\psi)=0$. But, as $\ell \in K^\ast \setminus\{0\}$ and $\psi >0$, we must have $\ell(\psi) >0$.
\qed
\end{proof}

\medskip
As usual, $T \in \BY$ is  r-special, $r(T)=1$. Set $r(y) = r(G(Ty)T)$.

\begin{prop} \label{(M-V)} Let $y \in Y$, $\psi>0$.
\begin{enumerate}
\item[(i)] The map $s \in \RR \to r(y+s \psi) = r(G(T(y+ s\psi),0) T)$  is strictly increasing. \\ For some $s_0 \in \RR$, $r(y+ s_0 \psi) = 1$.
\item[(ii)]
For  all increasing  sequences $s_k \to \infty$, the operators $\{   G(Ts_k \psi)\}\subset \Habb$ have a common weak limit $   G(Ts_k \psi) \wconv   H_\infty  \in \Habb$. An analogous result holds for decreasing sequences $s_k\to -\infty$.
\end{enumerate}
\end{prop}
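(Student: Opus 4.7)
The plan is to split the proposition into two largely independent arguments: strict monotonicity plus an intermediate value theorem for (i), and monotone-bounded convergence in the weak operator topology for (ii). Both rest on the same mechanism, namely that (r-Convs) together with Lemma \ref{smoothlambda} propagates strict cone inequalities to strict inequalities of spectral radii. For the monotonicity in (i), whenever $s_1<s_2$ the element $(s_2-s_1)T\psi$ is quasi-interior since $T\psi>0$, so $\tau T(y+s_2\psi) > \tau T(y+s_1\psi)$ for every $\tau\in(0,1]$; (r-Convs) gives $DP(\tau T(y+s_2\psi)) > DP(\tau T(y+s_1\psi))$, and integrating over $\tau$ and composing with the ergodic $T$ preserves strict positivity, so Lemma \ref{smoothlambda} yields $r(y+s_2\psi)>r(y+s_1\psi)$. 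Continuity of $s\mapsto r(y+s\psi)$ is already provided by Proposition \ref{prop:convs}.

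The delicate point is the existence of $s_0$. I would anchor the intermediate value argument at the critical point $y_c=0$ (from the normalization already in force in this section), for which $r(0)=r(DP(0)T)=1$ by (r-Crit). Strict monotonicity along $\{t\psi\}$ then gives $r(\psi)>1>r(-\psi)$ for free. To transfer this to the line through an arbitrary $y$, apply Lemma \ref{Hahn-Banach} with $z=0$: for every $\delta>0$ there exist $s^\ast(\delta)\in\RR$, $k(\delta)\in K$ and $z_\delta\in B_\delta^Y$ with $y+s^\ast(\delta)\psi = z_\delta+k(\delta)$. Putting $s_+(\delta):=s^\ast(\delta)+1$ yields
\[ y+s_+(\delta)\psi \ = \ \psi+k(\delta)+z_\delta \ \ge\ \psi+z_\delta \]
in the cone order, so monotonicity of $r$ in its argument gives $r(y+s_+(\delta)\psi)\ge r(\psi+z_\delta)$. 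As $\delta\to 0$, continuity of $r$ drives the right-hand side to $r(\psi)>1$, so for $\delta$ small enough one obtains $r(y+s_+\psi)>1$ with $s_+:=s_+(\delta)$. Exchanging the roles of $y$ and $0$ in Lemma \ref{Hahn-Banach} produces $s_-$ with $r(y+s_-\psi)<1$, and the intermediate value theorem then delivers a unique $s_0$ between them.

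For (ii), the same mechanism applied to $s\mapsto G(Ts\psi)$ shows this family is strictly increasing in the operator cone order on $\BY$. It lies in $\Habb$ and hence is uniformly norm-bounded, so for any $y\in K$ and $y^\ast\in K^\ast$ the scalar sequence $\langle y^\ast, G(Ts_k\psi)y\rangle$ is monotonic in $k$ and bounded, therefore convergent. As $K$ and $K^\ast$ are reproducing, linearity extends the limit to a bounded bilinear form on $Y^\ast\times Y$, represented by some $H_\infty\in\BY$, so that $G(Ts_k\psi)\wconv H_\infty$. Weak compactness of $\Habb$ (Lemma \ref{Gab}) places $H_\infty\in\Habb$, and monotonicity forces any two increasing sequences to share the same weak limit by interleaving. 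The decreasing case is handled identically with reversed inequalities.

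The main obstacle is the transfer step in (i). Since $\psi$ is only quasi-interior, a small perturbation $z_\delta$ is not in general comparable to $\psi$ in the cone order, so Lemma \ref{Hahn-Banach} does not directly place $y+s\psi$ above $\psi$. The resolution is to accept the weaker cone inequality $y+s_+(\delta)\psi\ge\psi+z_\delta$ and absorb the perturbation through continuity of $r$ as $\delta\to 0$, exploiting that $r(\psi)$ exceeds $1$ by a definite margin that dominates the continuous perturbation.
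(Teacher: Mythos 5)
Your proof is correct and follows essentially the same route as the paper's: anchoring at the normalized critical point $y_c=0$ to get $r(\psi)>1>r(-\psi)$, invoking Lemma \ref{Hahn-Banach} to transfer a strict spectral-radius inequality to the line through $y$, and using monotonicity plus continuity of $r$ plus the intermediate value theorem; for (ii) you likewise rely on monotone increase of the linearizations and the interleaving argument for uniqueness of the weak limit. The only differences are cosmetic reorganizations — you apply Hahn-Banach with $z=0$ and shift by $\psi$, then absorb the perturbation by letting $\delta\to 0$ (rather than fixing $\delta$ from continuity around $z_+$ before applying Hahn-Banach), and in (ii) you extract the weak limit from monotone bounded scalar convergence before invoking the compactness of $\Habb$ to place it there.
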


\begin{proof} (i): By Proposition \ref{speciall}, $G(T(y+ s\psi))T \in \BY$ is  r-special and the eigenvalue $r(y+ s\psi)$ is basic. The map $s \mapsto r(y+ s\psi)$
is a continuous, strictly increasing function, from Lemma \ref{smoothlambda}. We search for $s_+$ for which $r(y+ s_+\psi)>1$.

From (r-Crit),  $0 \in \Cr$, so that $ r(0)= r(G(0,0)T)= 1$. By monotonicity (Lemma \ref{smoothlambda}), for $z_+ = p \psi, p>0$, we have $ r(z_+)=
 r(G(Tz_+)T) > 1$. By continuity, there is $\delta > 0$ for which
$r(z_+ + z_\delta) > 1$ for $z_\delta \in B_\delta^Y$.

Apply Lemma \ref{Hahn-Banach} for $z = z_+$: we obtain $s^\ast \in \RR$, $k \in K$, and $z_{\delta} \in B_{\delta}^Y$ such that
$y + s^\ast   \psi =   z_+ + k +  z_{\delta}$.
We compare the spectral radius at different points:
\[ r( y + s^\ast \psi) =  r(z_+ + k  +  z_\delta) \ge r(z_+   +  z_\delta)> 1 . \]
Set $s_+ = s^\ast$.
Simple modifications of Lemma \ref{Hahn-Banach} obtain $s_-$ for which $r(y+ s_-\psi)<1$. By continuity and monotonicity, there is a single $s_0$ such that $r(y+ s_0\psi)=1$.

\smallskip (ii):
From  (r-Convs), as $  \psi > 0$,  the linearizations $  G(T s\psi)$ increase with $s$. Increasing sequences $\{ s^1_k\}$ and $\{s^2_k\}$ yield limits $  H_1$ and $  H_2$ by Lemma \ref{Gab}. For $y^\ast \in K^\ast, \ z \in K,$ the sequences
$\tau_k^1 = \langle y^\ast,   G(T s^1_k \psi) \ z \rangle$ and $\tau_k^2 = \langle y^\ast,    G(Ts^2_k \psi ) \ z \rangle$ increase and interlace: for every $N \in \NN$, there are $m,n > N$ such that $\tau_m^1 > \tau_n^2$ and other $m,n > N$ with $\tau_m^1 < \tau_n^2$. Thus $  H_1=  H_2:=  H_\infty$.
\qed
\end{proof}

\begin{cor} \label{cor:abundantcritical} Every vertical line $\{ w + t \phi(T), t \in \RR\}$ in the domain $Y$ of $F$ contains a critical point of $F$. The same holds for lines $\{t \psi, \psi>0, t \in \RR\}$.
\end{cor}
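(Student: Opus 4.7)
The plan is to view the corollary as two direct invocations of Proposition \ref{(M-V)}(i), one for each family of lines. Recall that the critical set has been identified in this subsection as $\Cr = \{ y \in Y : r(J(Ty)T) = 1\}$ (equivalently, by the conventions in this subsection where $G(Ty,0)$ also lies in $\Habb$ and governs the relevant spectral picture, by $r(G(Ty,0)T) = 1$). So producing a critical point on a given line reduces to finding a parameter $s_0$ along that line at which the map $s \mapsto r(y+s\psi)$ passes through the value $1$.

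For the first claim, given any $w \in Y$, I set $\psi := \phi(T)$. Because $T$ is r-special with $T\phi(T) = \phi(T) > 0$ by hypothesis in Theorem \ref{theo:F}, the eigenvector $\phi(T)$ is quasi-interior in the paper's sense, so $\psi > 0$. Applying Proposition \ref{(M-V)}(i) to the base point $y := w$ and this $\psi$ produces a $s_0 \in \RR$ with $r(w + s_0 \phi(T)) = 1$, and hence $w + s_0 \phi(T) \in \Cr$, as required. The second assertion is even more direct: apply Proposition \ref{(M-V)}(i) with $y := 0$ and the prescribed $\psi > 0$ to obtain $s_0$ with $s_0 \psi \in \Cr$.

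Uniqueness of $s_0$ on each line (the parenthetical ``exactly one'' in the text preceding Proposition \ref{(M-V)}) is built into the same proposition via the strict monotonicity of $s \mapsto r(y+s\psi)$, which rests on Lemma \ref{smoothlambda} together with (r-Convs). I do not foresee any genuine obstacle: the only point worth flagging is that the positivity hypothesis ``$\psi > 0$'' of Proposition \ref{(M-V)}(i) is satisfied by $\phi(T)$ precisely because of the standing assumption $T\phi = \phi > 0$ on the r-special operator $T$ in Theorem \ref{theo:F}. Thus the corollary is essentially a bookkeeping translation of Proposition \ref{(M-V)}(i) from a statement about the level set $\{r = 1\}$ into a statement about the critical set of $F$.
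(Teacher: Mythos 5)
Your strategy is exactly the paper's: the corollary is stated without proof as an immediate consequence of Proposition \ref{(M-V)}(i), applied with $\psi=\phi(T)$ on vertical lines (legitimate, since the standing hypothesis $T\phi=\phi>0$ makes $\phi(T)$ quasi-interior) and with $y=0$ on the rays $\{t\psi\}$. The one point that does not survive scrutiny is your parenthetical claim that $r(J(Ty)T)=1$ is ``equivalent'' to $r(G(Ty,0)T)=1$. These are spectral radii of two different operators: $J(Ty)=DP(Ty)$, whereas $G(Ty,0)=\int_0^1 DP(sTy)\,ds$. Both functions of $y$ equal $1$ at the normalized critical point $y_c=0$ and both are strictly increasing along positive rays, but that only forces their level sets $\{r=1\}$ to meet a ray through the origin at the same parameter $s=0$; on a general vertical line $\{w+t\phi\}$ the two crossing heights need not coincide, so the level sets are different subsets of $Y$. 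Consequently, a literal application of Proposition \ref{(M-V)}(i) produces a point where $r(G(T\cdot,0)T)=1$, which is not, as written, a critical point of $F$ (the paper itself invites this confusion by defining $r(y)$ once as $r(J(Ty)T)$ in the description of $\Cr$ and once as $r(G(Ty)T)$ just before the proposition).

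The repair is one line, and you should state it rather than the false equivalence: the proof of Proposition \ref{(M-V)}(i) goes through verbatim with $J(T(y+s\psi))=DP(T(y+s\psi))$ in place of $G(T(y+s\psi),0)$. Indeed $J(Ty)\in\Habb$ by (r-Hs); the map $s\mapsto r(J(T(y+s\psi))T)$ is continuous (Proposition \ref{prop:convs}, Lemma \ref{smoothlambda}) and strictly increasing, since $T(y+s_2\psi)>T(y+s_1\psi)$ for $s_2>s_1$ and (r-Convs) then makes $DP(T(y+s_2\psi))-DP(T(y+s_1\psi))$ strictly positive; $r(J(0)T)=1$ by (r-Crit) and Proposition \ref{r-specialMbgen}(i); and Lemma \ref{Hahn-Banach} supplies parameters where the radius exceeds, respectively falls below, $1$. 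With that substitution your argument is complete and coincides with the paper's intent.
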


\begin{prop}\label{properl} $F: Y \to Y$ is proper.

\end{prop}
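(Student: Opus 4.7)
\textbf{Plan for the proof of Proposition \ref{properl}.}

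The plan is to take a sequence $\{y_k\}\subset Y$ with $F(y_k)\to z_\infty$ and show that $\{y_k\}$ admits a convergent subsequence. Since $F=I-P\circ T$ and the composition $P\circ T$ is compact ($T$ compact and $P$ Lipschitz), it suffices to bound $\{y_k\}$ in $Y$: then $\{P(Ty_k)\}$ is relatively compact and $y_k=F(y_k)+P(Ty_k)$ converges along a subsequence. Decompose $y_k=w_k+t_k\phi$. The Lipschitz coercivity of $F^t$ from Proposition \ref{coercivelip}, applied with $v=0$, yields
\[
|w_k|\ \le\ C^{-1}\bigl(|\Pi_W F(y_k)|+|\Pi_W P(t_k\phi)|\bigr)\ =\ O(1+|t_k|),
\]
using $T\phi=\phi$ and the Lipschitz continuity of $P$. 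Hence $|y_k|$ is comparable to $1+|t_k|$, and the task reduces to bounding $\{t_k\}$.

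Argue by contradiction: assume $t_k\to+\infty$ along a subsequence (the case $t_k\to-\infty$ is analogous). Normalize $\hat y_k=y_k/t_k$; this sequence is bounded and satisfies $\phi^\ast(\hat y_k)=1$. By reflexivity of $Y$ together with the weak operator compactness of $\Habb$ (Lemma \ref{Gab}), extract subsequences for which $\hat y_k\rightharpoonup\hat y$ (so $\phi^\ast(\hat y)=1$, hence $\hat y\ne 0$) and $G(Ty_k)\wconv H_\infty\in\Habb$. Using $P(0)=0$, rewrite $F(y_k)=(I-G(Ty_k)T)y_k$ and divide by $t_k$:
\[
\hat y_k-G(Ty_k)T\hat y_k\ =\ F(y_k)/t_k\ \to\ 0.
\]
By compactness of $T$, $T\hat y_k\to T\hat y$ strongly, and combined with the uniform operator bound and weak operator convergence of $G(Ty_k)$, one has $G(Ty_k)T\hat y_k\rightharpoonup H_\infty T\hat y$. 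Passing to the weak limit, $H_\infty T\hat y=\hat y$. By Proposition \ref{r-specialMbgen}(i) applied to $H_\infty\in\Habb$, the number $1$ must equal $r(H_\infty T)$, and $\hat y$ is proportional to the Perron eigenvector of $H_\infty T$; the sign is fixed by $\phi^\ast(\hat y)=1>0$, giving $\hat y>0$ quasi-interior.

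The contradiction comes from the strict monotonicity of the Perron eigenvalue under positive perturbations. Since $0$ is critical, $r(G(0)T)=1$; by (r-Convs) together with $G(sT\hat y)=\int_0^1 DP(\sigma sT\hat y)\,d\sigma$, one has $G(sT\hat y)>G(0)$ strictly for every $s>0$, so Lemma \ref{smoothlambda} gives $r(G(sT\hat y)T)>1$. Proposition \ref{(M-V)}(ii) produces the common weak operator limit $G_+\in\Habb$ of the family $\{G(sT\hat y)\}$ as $s\to+\infty$; being a limit of a strictly monotone increasing family of positive operators, $G_+>G(sT\hat y)>G(0)$ strictly, so another application of Lemma \ref{smoothlambda} yields $r(G_+T)>r(G(sT\hat y)T)>1$. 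Since $y_k/t_k\to\hat y>0$ and $t_k\to+\infty$, Proposition \ref{(M-V)}(ii) applied in the direction $\hat y$ identifies $H_\infty=G_+$ as the common weak operator limit. This produces the contradiction $1=r(H_\infty T)=r(G_+T)>1$, which closes the argument.

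The main obstacle is precisely this last identification $H_\infty=G_+$: the weak operator limit of $G(Ty_k)=G(t_kT\hat y_k)$ must be compared with the limit of $G(sT\hat y)$ along asymptotically parallel trajectories that coincide only at infinity in a quasi-interior direction. The comparison relies on the monotone-limit structure of Proposition \ref{(M-V)}(ii) combined with the fact that $\hat y>0$ and $T$ is ergodic, rather than on an interior hypothesis on $K$.
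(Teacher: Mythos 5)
Your overall architecture is the paper's: reduce to bounding the sequence, blow up along the unbounded direction, identify the limit direction $\hat y$ as a quasi-interior eigenvector of some weak-operator limit $H_\infty T$ with $r(H_\infty T)=1$, and contradict the strict growth of the spectral radius along rays (Proposition \ref{(M-V)}). The first half of your argument is sound — the normalization $\phi^\ast(\hat y_k)=1$ is a clean way to guarantee $\hat y\ne 0$ and to fix the sign of $\hat y$, and the passage to the limit in $\hat y_k - G(Ty_k)T\hat y_k\to 0$ is exactly Lemma \ref{weaklimits}.

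The gap is the step you yourself flag: the identification $H_\infty = G_+$. Proposition \ref{(M-V)}(ii) only asserts a common weak operator limit for linearizations evaluated \emph{at points of the ray} $\{s\hat y,\ s>0\}$; its proof rests entirely on (r-Convs), i.e.\ on monotonicity of $s\mapsto G(Ts\hat y)$ along the cone order, which holds because the points $s_1\hat y < s_2\hat y$ are comparable. Your points $y_k=t_k\hat y_k$ are only \emph{asymptotically parallel} to the ray: $\hat y_k\to\hat y$ in norm gives no order relation whatsoever between $y_k$ and any $s\hat y$ (the cone may have empty interior, so norm-closeness does not imply domination), and the deviation $t_k(\hat y_k-\hat y)$ is $o(t_k)$ but possibly unbounded. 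Nothing in (r-Hs)/(r-Convs) forces the weak limit of $G(Ty_k)$ to coincide with, or even to dominate, the ray limit $G_+$, so the inequality $r(H_\infty T)>1$ does not follow. The paper closes this hole differently: it never compares $H_\infty$ with operators on the ray. Instead it introduces the projected points $y_k^p=|y_k|\,\hat y_\infty$, which lie \emph{exactly} on the ray, rewrites the equation as
\[
y_k^p - P(Ty_k^p) \;=\; \bigl(P(Ty_k)-P(Ty_k^p)\bigr) + g_k + (y_k^p-y_k),
\]
and uses the Lipschitz bound $|P(Ty_k)-P(Ty_k^p)|\le C|y_k-y_k^p|=C|y_k|\,|\hat y_k-\hat y_\infty|=o(|y_k|)$ to show the right-hand side is $o(|y_k|)$. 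Dividing by $|y_k|$ yields $\hat y_\infty = G_\infty^p T\hat y_\infty$ for the weak limit $G_\infty^p$ of linearizations \emph{at ray points}, so $r(G_\infty^p T)=1$, and now Proposition \ref{(M-V)} applies directly and gives the contradiction. You should replace your identification step by this projection-plus-Lipschitz argument; as written, the proof is incomplete at its decisive point.
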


The argument follows in spirit the proof of the properness of $F$ in \cite{AP}.

\begin{proof}
Let $\{y_k \}\subset Y$ be a sequence  for which $F(y_k) =g_k \to g_\infty \in Y$. Suppose first that $\{   y_k\}$ is bounded in $Y$. Since the linearizations $  G(y,z),\  G(y)$ are uniformly bounded and $P(0)=0$,
\[ | P(T y_k)| =| G(T y_k) y_k|  \le C|y_k|  \]
is also bounded in $Y$, and the sequence $y_k=  P(T y_k)+g_k$ is bounded. Therefore, for some subsequence, $y_k  \rightharpoonup  y_\infty$. By the compactness of $T$ (and the fact that $P$ is Lipschitz), again up to a subsequence,  $  T y_k \to T y_\infty $ and thus  $P(T y_k)\to P(Ty_\infty)$.  Therefore,
$y_k\to y_\infty =P(Ty_\infty) + g_\infty$.

Consider now $ \{y_k\}$ unbounded in $Y$, for strictly increasing  $|y_k|  \to \infty$.
Normalize $\hat y_k =  y_k/|y_k|  $ and use the linearization $G_k=  G(T y_k)$:
\[\hat y_k -  P(T y_k)/|   y_k|  =  \hat y_k -  G_k \hat y_k = g_k/|y_k|  \to 0 .\]
As usual, since $G_k\in \Hab$, up to subsequences, $G_k \wconv G_\infty$  for  some $     G_\infty \in \Hab$, and from  Lemma \ref{weaklimits},
\[ \hat y_k \to \hat y_\infty\ne 0, \ \hbox{and} \  G_\infty   T \hat y_\infty = \hat y_\infty. \]
By Propositions \ref{speciall} and \ref{r-specialMbgen},
$r(G_\infty T)=1$ is a basic eigenvalue and  either $   \hat y_\infty$  or $-  \hat y_\infty $ is a quasi-interior point.
   Say $  \hat y_\infty > 0$.

Consider now the projections $y_k^p = |y_k|  \hat y_\infty$ along the ray
$\{ t \hat y_\infty, t >0 \}$.
Write the equation $ y_k -   P(T y_k) = g_k$ as

\begin{equation}\label{ast}
  y_k^p -    P (T y_k^p) =     P(T y_k) -      P(T y_k^p)+  g_k+  y_k^p-  y_k \ .\quad \quad
\end{equation}

The right hand side is $o(|  y_k|  )$. This is clear for the bounded sequence $\{   g_k\} $ and for $  (y_k^p-y_k)=|  y_k|(\hat y_k- \hat y_\infty)$, since $\hat y_k \to \hat y_\infty \in Y$.
Moreover,
\[ | P (T y_k) - P (T y_k^p)| \le C|y_k-   y_k^p|  =C|   y_k|  |\hat y_k- \hat y_\infty|  =o(|   y_k|  ). \]
Set $P (T y_k^p) = G_k^p \ T y_k^p$. By Lemma  \ref{Gab}, for a subsequence, $G_k^p \wconv G_\infty^p \in \Hab$.

Divide equation (\ref{ast}) by $|  y_k| $ to obtain
\[ \lim \ \big(y_k^p/|y_k|  - P(T y_k^p)/|  y_k|  \big) =  \hat y_\infty -     G_\infty^p \ T \hat y_\infty  =   0 , \]
and, as $\hat y_\infty > 0$,
$1 \in \sigma (G_\infty^p T )$ and, by Theorem \ref{theo:KRnointerior}, $r(G_\infty^p T ) = 1$. This contradicts Proposition \ref{(M-V)}: the points $y_k^p$ belong to a common ray $\{s \hat u_\infty = s \psi, s \in \RR^+\}$. Combining items (i) and (ii),  $r(G_k^p T)$
increases and takes a value larger than one: the limit cannot be one.

Minor adjustments handle also the case $  \hat y_\infty < 0$.
\qed
\end{proof}

\medskip
When $F:Y \to Y$ is a $C^1$ map, the adapted coordinates in Proposition \ref{adaptedcoordinatesFr} are obtained from $C^1$ changes of variables $\Psi: Y \to Y$. The inverses of vertical lines $L_z= \{ z + h \phi, h \in \RR\}$  under $F$, the fibers
\[ \Lambda_z = \{ y(t) = w(z,t) + t\phi, w(z,t) \in W, \ t \in \RR\} \ , \] have $C^1$ parameterizations. Fix $z \in W$, set $F(y(t)) = z + h(t) \phi$, where again $h(\cdot): \RR \to \RR$ is a $C^1$ map. Let $\lambda(y(t))= 1 - r(DP(Ty(t)T))$.

\begin{lemma} \label{handr} For a continuous strictly positive $\alpha: \RR \to \RR^+$, $h'(t) = \lambda(y(t)) \alpha(t)$.
\end{lemma}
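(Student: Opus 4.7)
My plan is to differentiate the fiber identity, pair with a left Perron eigenvector of $A(t):=DP(Ty(t))T$, and pin the sign of the resulting ratio using a global connectedness argument together with the critical point supplied by (r-Crit).

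Differentiating $F(y(t))=z+h(t)\phi$ along the fiber gives $(I-A(t))y'(t)=h'(t)\phi$. By Propositions \ref{speciall} and \ref{prop:convs}, $A(t)$ is r-special with basic eigenvalue $r(A(t))$; Lemma \ref{smoothlambda} produces continuously varying Perron pairs $\psi(t)\in K$, $\psi^\ast(t)\in K^\ast$ with $\psi^\ast(t)(\psi(t))=1$. Pairing the identity with $\psi^\ast(t)$ yields
\[ (1-r(A(t)))\,\psi^\ast(t)(y'(t)) \ = \ h'(t)\,\psi^\ast(t)(\phi). \]
Since $\psi^\ast(t)$ is strictly positive and $\phi$ is quasi-interior, $\psi^\ast(t)(\phi)>0$. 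I then set
\[ \alpha(t) \ := \ \frac{\psi^\ast(t)(y'(t))}{\psi^\ast(t)(\phi)}, \qquad \text{so that} \quad h'(t) \ = \ \lambda(y(t))\,\alpha(t). \]
Under (r-Hs), Proposition \ref{adaptedcoordinatesFr} upgrades to a $C^1$ diffeomorphism, so $y(z,t)$, $y'(z,t)$, $A(z,t)$, $\psi^\ast(z,t)$ depend continuously on $(z,t)\in W\oplus\RR$ and the denominator stays bounded away from $0$. Hence $\alpha$ is jointly continuous on $W\oplus\RR$.

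To prove $\alpha>0$ I argue globally. Suppose $\alpha(z,t)=0$ at some $(z,t)$. Then $h'(t)=0$, so $(I-A(t))y'(t)=0$. If $r(A(t))\ne 1$, Proposition \ref{r-specialMbgen}(i) forces $1\notin\sigma(A(t))$, so $y'(t)=0$; but $y'(t)=w'(z,t)+\phi$ with $w'(z,t)\in W=\ker\phi^\ast$ satisfies $\phi^\ast(y'(t))=1\ne 0$, a contradiction. If $r(A(t))=1$, simplicity of the basic eigenvalue (Corollary \ref{twoprojections}) gives $y'(t)=c\,\psi(t)$ with $c\ne 0$, whence $\psi^\ast(t)(y'(t))=c\ne 0$, again a contradiction. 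So $\alpha$ never vanishes on the connected set $W\oplus\RR$ and has constant sign. To fix the sign, translate so the critical point of (r-Crit) sits at $0$; then $\Psi(0,0)=(0,0)$, $y(0,0)=0$, $r(A(0))=1$, and $y'(0,0)=c\,\psi(0)$ with $c\ne 0$. Pairing with $\phi^\ast$ and using $\phi^\ast(\phi)=1$ gives $1=c\,\phi^\ast(\psi(0))$, and strict positivity of $\phi^\ast$ against the quasi-interior $\psi(0)$ forces $c>0$. Hence $\alpha(0,0)=c/\psi^\ast(0)(\phi)>0$, and globally $\alpha>0$ on $W\oplus\RR$, in particular on every fiber.

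The main obstacle is precisely this last step: along one fiber one cannot fix the sign from a single critical point, because a fiber need not contain any critical point of $F$ at all. The remedy is to view $\alpha$ as a continuous, nowhere-vanishing function on the entire parameter space $W\oplus\RR$ and exploit connectedness, anchoring the sign at the single critical point provided by the global hypothesis (r-Crit).
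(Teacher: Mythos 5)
Your proof is correct and follows the paper's argument in its essentials: you differentiate $F(y(t))=z+h(t)\phi$, pair with the positive eigenvector $\psi^\ast(t)$ of $(DP(Ty(t))T)^\ast$, obtain the identity $h'(t)\,\psi^\ast(t)(\phi)=\lambda(y(t))\,\psi^\ast(t)(y'(t))$, and define $\alpha$ as the same quotient the paper uses. Your non-vanishing argument ($\alpha(t)=0\Rightarrow h'(t)=0\Rightarrow y'(t)\in\Ker DF(y(t))$, which is either trivial or spanned by the quasi-interior Perron vector, both impossible) is the paper's argument in a slightly different order. The one place you genuinely diverge is the passage from ``$\alpha$ never vanishes'' to ``$\alpha>0$'': the paper fixes the sign at a critical point \emph{on the fiber}, where $y'(t_c)>0$ is quasi-interior and hence pairs positively with the strictly positive $\psi^\ast(t_c)$, and is terse about fibers that might contain no critical point; you instead treat $\alpha$ as a continuous nowhere-vanishing function on all of $W\oplus\RR$ and anchor the sign at the single critical point supplied by (r-Crit), which you correctly place on the fiber through the origin after translation. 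Your global route buys a uniform treatment of all fibers at the cost of needing joint continuity of $y'(z,t)$ and of the Perron data in $(z,t)$, which does hold here since $\Psi$ is a $C^1$ change of variables under (r-Hs) and the quotient defining $\alpha$ is independent of the normalization of $\psi^\ast$; the paper's local route avoids joint continuity but implicitly relies on every fiber meeting the critical set, a fact only established afterwards in Proposition \ref{fibercrit}. Your version is therefore a legitimate, and arguably more self-contained, completion of the same proof.
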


Part of this lemma is Proposition 2.6 in \cite{CTZ2}.

\begin{proof} Differentiate $F(y(t)) = z + h(t) \phi$ along a fiber $y(t)$ to obtain
\[ DF(y(t)) y'(t) = h'(t) \phi \ . \]
Since $\Pi_W DF(y(t)): W \to W$ is always invertible, at a critical point $y(t_c)$, we must have $h'(t_c)=0$ and $\phi \notin   DF(y(t))$. Thus $y'(t_c)$ is a generator of $\Ker DF(y(t_c))$, and either $y'(t_c)>0$ or $-y'(t_c)>0$.  As $y'(t_c) = w'(t_c) + \phi$ and $W = \Ker \phi^\ast$,
\[ \langle \phi^\ast, y'(t_c) \rangle  =  \langle \phi^\ast, w'(t_c) + \phi \rangle
 =  \langle \phi^\ast,\phi \rangle > 0 \]
and $y'(t_c)>0$. Conversely, if $h'(\bar t)=0$, as $y' (\bar t)= w'(\bar t) + \phi \ne 0$, we must have that $DF(y(\bar t))$ is not invertible and hence $y(\bar t)=y_c$. Adding up, $y(t)$ is a critical point of $F$ if and only if $h'(t)=0$.

As $DF(y(t)) = I - H(y(t))T$ for $H(y(t))=DP(Ty(t)) \in \Hab$,  eigenvectors  $\phi(y(t))>0$ and  $\phi^\ast(y(t))>0$ are associated with $r(H(y(t)))$. Use brackets for evaluation of dual vectors,
\[ \langle \phi^\ast(y(t)) , DF(y(t)) y'(t) \rangle = h'(t) \langle \phi^\ast(y(t)), \phi \rangle  \]
and $  h'(t) \langle \phi^\ast(y(t)), \phi \rangle = \langle DF(y(t))^\ast \phi^\ast(y(t)) ,  y'(t) \rangle = \lambda(y(t)) \langle  \phi^\ast(y(t)) ,  y'(t) \rangle  $.

\smallskip
As $\langle \phi^\ast(y(t)), \phi \rangle > 0$,  $\langle  \phi^\ast(y(t)) ,  y'(t) \rangle = 0 $ if and only if $h'(t)=0$.  From the first part of the proof, $t = t_c$ for a critical point $y(t_c)$ and then $\langle  \phi^\ast(y(t_c)) ,  y'(t_c) \rangle = 0 $, which cannot happen because $y'(t_c)>0$. Thus $\langle  \phi^\ast(y(t)) ,  y'(t) \rangle > 0 $
and the quotient $\alpha(t)={\langle  \phi^\ast(y(t)) ,  y'(t) \rangle }/{\langle \phi^\ast(y(t)), \phi \rangle}$ is strictly positive.
\qed
\end{proof}

\begin{prop}\label{fibercrit} Every fiber contains points $y^-$, $y^0$ and $y^+$ for which $r(y^-) < r(y^0)=1 < r(y^+)$, or equivalently, $\lambda(y^-) > \lambda(y^0)=0 > \lambda(y^+)$.
\end{prop}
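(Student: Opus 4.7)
Fix a fiber $\Lambda_z$ with parameterization $y(t)=w(z,t)+t\phi$, and set $\mu(t)=r(G(Ty(t))T)$; since $\lambda(y(t))=1-\mu(t)$, the statement is equivalent to exhibiting $t_-,t_0,t_+\in\RR$ with $\mu(t_-)<\mu(t_0)=1<\mu(t_+)$. My plan is to establish continuity of $\mu$, then prove asymptotic sign information $\mu(t)>1$ for $t$ large positive and $\mu(t)<1$ for $t$ large negative, and close with the intermediate value theorem. Continuity of $\mu$ is immediate from Proposition \ref{prop:convs} (continuity of $y\mapsto G(Ty,0)\in\BY$) combined with Proposition \ref{speciall} and Lemma \ref{smoothlambda} (real analyticity of the basic eigenvalue $r$ of r-special operators).

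The heart of the argument is the asymptotic analysis, modeled on the Hahn-Banach domination strategy of Proposition \ref{(M-V)} but executed along the fiber instead of a ray. Fix $p>0$ small enough that $r(p\phi)>1$, which exists by Proposition \ref{(M-V)}(i) applied at $y=0$, $\psi=\phi$ together with the strict monotonicity in Lemma \ref{smoothlambda}. I would apply Lemma \ref{Hahn-Banach} with $y\mapsto y(t)$, $z\mapsto p\phi$, $\psi\mapsto\phi$ to obtain $s^\ast(t)\in\RR$, $k(t)\in K$, $z_\delta\in B_\delta^Y$ with $y(t)+s^\ast(t)\phi=p\phi+k(t)+z_\delta$, whence $y(t)+s^\ast(t)\phi\ge p\phi+z_\delta$ in the cone order. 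Cone-monotonicity of the basic eigenvalue (Lemma \ref{smoothlambda}) then gives $r(y(t)+s^\ast(t)\phi)\ge r(p\phi+z_\delta)$, which is still strictly above $1$ for $\delta>0$ small, by continuity of $r$ under small perturbations. Because the fiber is asymptotically dominated by the growing multiple $t\phi$ of $\phi$ while $|w(z,t)|$ grows only linearly in $t$ (Proposition \ref{coercivelip}), together with the common weak-operator limit in Proposition \ref{(M-V)}(ii), one expects $s^\ast(t)$ to be non-positive for $t$ sufficiently large; then $y(t)\ge y(t)+s^\ast(t)\phi$ combined with monotonicity of $r$ yields $\mu(t)=r(y(t))\ge r(p\phi+z_\delta)>1$. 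A symmetric argument with $-p\phi$ (for which $r(-p\phi)<1$) gives $\mu(t)<1$ for $-t$ large.

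The main obstacle will be the sign control of the Hahn-Banach shift $s^\ast(t)$: since the fiber is not a straight ray and $w(z,t)$ satisfies only a linear (not sublinear) bound from Proposition \ref{coercivelip}, $s^\ast(t)\le 0$ does not follow from a naive norm comparison. The argument must exploit the weak-operator compactness of $\Hab$ (Lemma \ref{Gab}), the strict monotonicity of $s\mapsto r(y(t)+s\phi)$ (Proposition \ref{(M-V)}(i)), and the common weak-operator limit of $G(Ts\phi)$ as $s\to\pm\infty$ (Proposition \ref{(M-V)}(ii)) in order to absorb the $W$-correction $w(z,t)$ into the cone part $k(t)$ of the decomposition. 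Once $\mu(t)>1$ for $t$ large positive and $\mu(t)<1$ for $-t$ large are in hand, the intermediate value theorem applied to the continuous function $\mu:\RR\to\RR$ delivers $t_0$ with $\mu(t_0)=1$ and, together with the witnesses $t_\pm$, the three points $y^-=y(t_-),\ y^0=y(t_0),\ y^+=y(t_+)$ required by the statement.
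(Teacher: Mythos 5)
Your proposal tries to establish the asymptotics $\mu(t)=r(y(t))>1$ for $t\to+\infty$ and $\mu(t)<1$ for $t\to-\infty$ directly along the fiber, by a Hahn--Banach domination argument patterned on Proposition \ref{(M-V)}. You correctly flag the obstruction yourself, and it is fatal as stated: Proposition \ref{(M-V)} works because it compares points along a ray $\{y+s\psi\}$ with $\psi>0$, so the Hahn--Banach shift $s^\ast$ moves you along that same ray and monotonicity of $s\mapsto r(y+s\psi)$ finishes the job. A fiber is not a ray; the horizontal correction $w(z,t)$ is controlled only linearly in $t$ (not sublinearly), it need not be sign-definite, and it changes with $t$. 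Consequently you cannot force $s^\ast(t)\le 0$, and even if you could, $r(y(t))\ge r(y(t)+s^\ast(t)\phi)$ does not follow unless $-s^\ast(t)\phi\ge 0$, which you have no way of guaranteeing. The suggestion to ``absorb the $W$-correction into the cone part'' using the common weak-operator limit in Proposition \ref{(M-V)}(ii) is speculative: that limit is taken along multiples of a fixed $\psi>0$, not along a fiber, and there is no mechanism in Lemma \ref{smoothlambda} or Theorem \ref{theo:KRnointerior} turning a norm bound on $w(z,t)$ into a cone-order bound.

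The paper's proof sidesteps this entirely. It first uses the single critical point $z^0$ supplied by (r-Crit) and strict monotonicity (Lemma \ref{smoothlambda}) to produce \emph{some} three points $z^-,z^0,z^+$ in the domain (along a ray, not along any particular fiber) with $r(z^-)<1=r(z^0)<r(z^+)$. It then argues by contradiction using the already-proved structure of the map: by Theorem \ref{theo:F}(2) every height function $h(t)$ is strictly monotone or unimodal, and by properness (Propositions \ref{properl}, \ref{prop:STZ}) all fibers share the same asymptotic type. If $h$ were, say, strictly decreasing on one (hence every) fiber, then by the derivative identity $h'(t)=\lambda(y(t))\alpha(t)$ of Lemma \ref{handr} with $\alpha>0$, one would have $\lambda\le 0$, i.e.\ $r\ge 1$, on \emph{all} of $Y$, contradicting the existence of $z^-$; symmetrically for strictly increasing. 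What remains is the unimodal (fold) case, and there $h'$ changes sign along every fiber, which by Lemma \ref{handr} is precisely the statement $\lambda(y^-)>0>\lambda(y^+)$ with some $\lambda(y^0)=0$ in between. The key difference is that the paper never needs to locate a critical point on a given fiber by direct construction; it transfers the qualitative information ``$r<1$ and $r>1$ occur somewhere'' to ``$r<1$ and $r>1$ occur on every fiber'' via the uniform shape of the height functions. Your continuity-plus-IVT wrap-up is fine, but the asymptotic sign control it rests on is exactly the missing piece, and it is not recoverable by the route you outline.
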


\begin{proof}
Let $z^0 \in Y$ be critical. For $\psi >0$, set $z^\pm = z^0 + s^\pm \psi$, where $s^- < 0 < s^+$.  By monotonicity (Lemma \ref{smoothlambda}), $r(z^-) < r(z^0)=1 < r(z^+)$.
From Theorem \ref{theo:F} (2), the height function $h(t)$ of each fiber is either strictly increasing or unimodal. From properness (Proposition \ref{properl}), all functions $h(t)$ have the same asymptotic behavior (Proposition \ref{prop:STZ}).

If $h$ along a fiber is strictly decreasing (hence along all fibers), for example, we must have $h'(t) \le 0$. But from Proposition \ref{handr}, $\lambda(y(t)) \le 0 $, or $r(y(t)) \ge 1$, and this violates the existence of a point $z^-$ for which $r(z^-) < 1$. Similarly $h$ cannot be strictly increasing.
We are left with folds, implying the result.
\qed
\end{proof}

\smallskip
We extend Proposition \ref{NT}. A point $y \in Y$ is {\it regular} if it is not critical.

\begin{prop} \label{NT2} Let $y_c \in Y$ be a critical point of $F$,  $r(y_c,y_c)=r(J(Ty_c)T)=1$. Then $F(y_c)$ has a single preimage (clearly $y_c$ itself).
For a regular point $y \in Y$, the point $F(y)$ only has regular preimages.
\end{prop}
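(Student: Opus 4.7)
The plan is to argue the first assertion by contradiction, using the trichotomy of Proposition \ref{NT} together with the strict monotonicity of the spectral radius (Lemma \ref{smoothlambda}) driven by hypothesis (r-Convs); the second assertion will then follow from the first by the contrapositive.

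For the first claim, suppose $F(y) = F(y_c)$ for some $y \ne y_c$. Proposition \ref{NT} gives trichotomy, and up to symmetry I may assume $y > y_c$. Since $T$ is ergodic, $T(y - y_c) > 0$ is quasi-interior, so for each $s \in (0,1]$ one has $Ty_c + s(Ty - Ty_c) > Ty_c$. Hypothesis (r-Convs) then yields that $DP(Ty_c + s(Ty - Ty_c)) - DP(Ty_c)$ is strictly positive for each such $s$, and integrating in $s$ gives
\[ G(Ty,Ty_c) - J(Ty_c) = \int_0^1 \bigl( DP(Ty_c + s(Ty - Ty_c)) - DP(Ty_c) \bigr)\, ds \]
in the strict operator sense: applied to any $k \in K \setminus \{0\}$ the integrand is monotone in $s$ (again by (r-Convs)) and lies in $K \setminus \{0\}$ for $s$ in an interval, so the integral belongs to $K \setminus \{0\}$. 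Right composition with the ergodic $T$ preserves strict positivity, so $G(Ty,Ty_c)T > J(Ty_c)T$, and Lemma \ref{smoothlambda} yields $r(G(Ty,Ty_c)T) > r(J(Ty_c)T) = 1$. On the other hand, $F(y) = F(y_c)$ rewrites as $(I - G(Ty,Ty_c)T)(y - y_c) = 0$, so $1$ is an eigenvalue of $G(Ty,Ty_c)T$; since $G(Ty,Ty_c) \in \Hab$ by Proposition \ref{prop:convs}, Proposition \ref{r-specialMbgen}(i) forces $1 = r(G(Ty,Ty_c)T)$, the desired contradiction.

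For the second claim, if $y$ is regular and $y'$ is a critical preimage of $F(y)$, then $F(y') = F(y)$, and the first claim applied with $y_c := y'$ shows $F(y')$ has only the preimage $y'$, so $y = y'$, contradicting that $y'$ is critical while $y$ is regular. The only mildly delicate step is making sure the integral of a family of strictly positive operators is itself strictly positive; this is where the monotonicity of $s \mapsto DP(Ty_c + s(Ty - Ty_c))$ coming from (r-Convs) is needed, allowing the nonzero contribution on a full subinterval to survive in the integral. Beyond this, the proof is short and parallels the trichotomy argument already used in Proposition \ref{NT}.
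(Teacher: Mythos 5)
Your proof is correct and follows essentially the same route as the paper's: trichotomy from Proposition \ref{NT}, ergodicity of $T$ to compare $Ty$ and $Ty_c$, (r-Convs) plus the strict monotonicity of the spectral radius to force $r(G(Ty,Ty_c)T)\ne 1$, contradicting Proposition \ref{r-specialMbgen}(i). The only difference is that you spell out the (worthwhile) detail that the integral defining $G(Ty,Ty_c)-J(Ty_c)$ is genuinely strictly positive, a point the paper leaves implicit.
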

\begin{proof} Suppose $F(y) = F(y_c)$, with $y\neq y_c$. Then, as in Proposition \ref{NT}, trichotomy holds: $y > y_c,$
or $y < y_c$ and $r(y,y_c)=1$. Since $T$ is ergodic (r-special), either $T y > T y_c$ or $Ty < T y_c$. Since by hypothesis $r(y_c,y_c)=1$, from (r-Convs), in both cases $r(y,y_c)\ne 1$. The second statement now is obvious.
\qed
\end{proof}

\medskip
From Proposition \ref{properl}, the map $F= I - PT: Y \to Y$ is proper and clearly $PT:Y \to Y$ is a compact map. Moreover, any point has at most two preimages (Proposition \ref{NT}). The degree $\deg(F)$ is a well defined map, and we compute it by considering indices at regular points. More explicitly, for appropriate $y \in Y$, set $\ind(F,y)= \deg(F, B_\epsilon(y), F(y))$, for small $\epsilon >0$.

\begin{prop} \label{index}
\begin{enumerate}
\item [(i)] For $\lambda(y) = 1 - r(y)$,  $\ind(F,y) = \sgn \lambda(y)$.

\item [(ii)] The function $F:Y \to Y$ is not injective. Also, $\deg(F)=0$.
\item [(iii)] The function $F:Y \to Y$ folds downwards.
\end{enumerate}
\end{prop}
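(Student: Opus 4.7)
For $(i)$, I would apply the Leray--Schauder index formula at a regular $y$: since $DF(y) = I - DP(Ty)T$ is an invertible compact perturbation of the identity, the index is $(-1)^\beta$, where $\beta$ is the sum of algebraic multiplicities of the real eigenvalues of $DP(Ty)T$ in $(1,+\infty)$. Proposition \ref{r-specialMbgen}(iii) limits their number to at most one, and when such an eigenvalue occurs it coincides (Proposition \ref{speciall}) with the simple basic eigenvalue $r(y)$ of the r-special operator $DP(Ty)T$. Hence $\beta$ is $1$ if $r(y) > 1$ and $0$ if $r(y) < 1$, giving $\ind(F,y) = \sgn(1-r(y)) = \sgn\lambda(y)$.

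For $(ii)$, by $(i)$ and Proposition \ref{fibercrit} every fiber contains regular points at which $\ind(F,\cdot)$ takes both values $+1$ and $-1$. A proper injective compact perturbation of the identity is a homeomorphism (Banach--Mazur, exactly as in Section \ref{r-HH}), and the Leray--Schauder index of a homeomorphism is globally constant, contradicting the preceding observation; so $F$ is not injective. For $\deg F = 0$, Theorem \ref{theo:F}(2) together with Proposition \ref{fibercrit} shows that the height $h(t)$ along each fiber is \emph{strictly} unimodal (since $h' = \lambda\alpha$ takes both signs by Lemma \ref{handr}), hence attains a strict extremum whose image misses half of every vertical line in the counterdomain. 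Properness (Proposition \ref{properl}) makes the Leray--Schauder degree of $F = I - PT$ well-defined, and its value at any preimage-free point is $0$.

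For $(iii)$, Proposition \ref{prop:STZ} together with $(ii)$ forces $F$ to fold vertically; only the upward case must be ruled out. I would work on the fiber $\Lambda_0$ through the critical point $y_c = 0$: by Proposition \ref{NT2} its unique critical parameter is $t^\ast = 0$, giving $h(0) = 0$. The proof of Lemma \ref{handr} shows that $y'(0)$ is quasi-interior and spans $\Ker DF(0)$, so $y'(0) = \psi_0$ is (up to normalization) the basic positive eigenvector of $J(0)T$. For small $t > 0$ the auxiliary point $t\psi_0$ is quasi-interior, and (r-Convs) combined with Lemma \ref{smoothlambda} yields $r(J(Tt\psi_0)T) > r(J(0)T) = 1$ strictly. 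A perturbative comparison should propagate this strict inequality to $r(y(t)) > 1$, giving $\lambda(y(t)) < 0$ and $h'(t) < 0$ just to the right of $0$. Strict unimodality then extends $h' < 0$ to all $t > 0$, while properness of $F$ forces $h(t) \to -\infty$ as $t \to +\infty$: otherwise $\{y(t_n)\}$ would be an unbounded sequence with bounded image under $F$. A symmetric argument handles $t \to -\infty$, and Proposition \ref{prop:STZ} transfers the conclusion to every fiber.

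The principal obstacle is the perturbative step in $(iii)$: showing $r(y(t)) > 1$ for small $t > 0$ when $y(t) = t\psi_0 + o(t)$ need not lie in the cone $K$. The strict monotonicity of $r$ in Lemma \ref{smoothlambda}, derived from (r-Convs), relies on the cone ordering, and the $o(t)$ deviation of $y(t)$ off the ray $\RR_{+}\psi_0$ can destroy it. Mere continuity of $r$ in $y$ delivers only $r(y(t)) \to 1$, not the strict inequality needed. The resolution must quantify the gap $r(J(Tt\psi_0)T) - 1 > 0$ and absorb the off-cone perturbation $y(t) - t\psi_0$ through the analytic dependence of the basic eigenvalue on the operator argument (Lemma \ref{smoothlambda}).
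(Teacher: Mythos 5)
Your argument for item (i) is essentially the paper's: the Leray--Schauder index at a regular point is $(-1)^\beta$ with $\beta$ the count of eigenvalues of $J(Ty)T$ exceeding $1$, and Proposition \ref{r-specialMbgen}(iii) caps $\beta$ at one, forcing $\ind(F,y)=\sgn\lambda(y)$.

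For item (ii) you take a genuinely different route. The paper picks $y^\pm = y_c + s^\pm\psi$ near a critical point, computes $\ind(F,y^-)=+1$, $\ind(F,y^+)=-1$, and adds up indices over the (at most two) preimages of $F(y^\pm)$ to force $\deg(F)=0$ together with the existence of a second preimage for each, which yields non-injectivity as a by-product. You instead derive non-injectivity first (a homeomorphism of the form $I-$ compact has globally constant index, contradicting that $\ind$ takes both signs on a fiber), and then get $\deg F=0$ by observing that strict unimodality of $h$ forces the image of each fiber to miss a half-line, hence the image of $F$ misses points. Both are correct; the paper's computation is a single inequality-chase with degrees, while yours splits the two claims and leans on Proposition \ref{fibercrit} plus the strict unimodality bought by Theorem \ref{theo:F}(2).

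For item (iii) there is a genuine gap, which you correctly identify. Trying to compute the sign of $h'$ on the fiber through $y_c = 0$ requires comparing $r(J(Ty(t))T)$ with $r(J(0)T)=1$ for small $t>0$, and $y(t)=t\psi_0 + o(t)$ is \emph{not} cone-comparable to $0$: the $o(t)$ horizontal deviation need not be positive, so (r-Convs) and the strict monotonicity of $r$ (Lemma \ref{smoothlambda}) do not apply directly, and continuity alone gives only $r(y(t))\to 1$. The paper sidesteps this entirely by exploiting what (ii) already gave: take $y^+\ne y^-$ with $F(y^+)=F(y^-)$; Proposition \ref{prop:convs} supplies a linearization with $(I-G(y^+,y^-)T)(y^+-y^-)=0$, so trichotomy holds for $y^+-y^-$, and since $\langle\phi^\ast, y^+-y^-\rangle = (t^+-t^-)\langle\phi^\ast,\phi\rangle > 0$, in fact $y^+ - y^- > 0$. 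Now (r-Convs) applies \emph{cleanly} to the pair $(Ty^+,Ty^-)$, giving $r(y(t^+))>r(y(t^-))$, and combined with (i) and $\deg F = 0$ (so the two indices are opposite) one gets $r(y(t^+))>1>r(y(t^-))$, hence $h'(t^+)<0<h'(t^-)$ by Lemma \ref{handr}; strict unimodality then forces a maximum, i.e.\ a downward fold. In short: do not work on the fiber through the critical point, where the cone ordering is unavailable; work with a colliding pair, where trichotomy hands you the ordering for free.
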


\begin{proof}
(i) From a standard argument using the inverse function theorem, at a regular point $y \in Y$, $\ind(F,y) = \ind(F^L,y)$, where
\[ F^L(z) = F(y) + DF(y)(z-y)=  F(y) + (z-y) - J(Ty)T(z-y)  \ . \]
Recall that, for a compact operator $K$, the degree of  $I - K \in \BY$ is $\parity(K)$, the parity of the number of strictly negative eigenvalues of $I-K$, or equivalently, the parity of the number of eigenvalues of $K$ which are larger than 1.
Thus, for the restricted affine map $F^L$, $\deg(F^L, B_\epsilon(y), F^L(y))= \parity(J(Ty)T)$. By Proposition \ref{r-specialMbgen}, $J(Ty)T$ has zero or one eigenvalue larger than 1, depending if $r(y,y)<1$ or $r(y,y)>1$ and then $\ind(F,y) = \parity(J(Ty)T)=1$ or $-1$, always $\sgn \lambda(y)$.

\medskip
(ii) Let $y_c \in Y$ be critical. For $\psi >0$, set $y^\pm = y_c + s^\pm \psi$, where $s^- < 0 < s^+$.  By monotonicity (Lemma \ref{smoothlambda}), the sign of $s^\pm$ equals the sign of $1 - r(J(Ty^\pm)T)$, so that $\ind(F, y^-) =1$, and $\ind(F, y^+) = -1$.
By Propositions \ref{NT} and \ref{NT2}, the image $F(y)$ of any regular point $y \in Y$ has at most two regular preimages, one being $y$. Call the preimages of $F(y^\pm)$ by $y_1^\pm$ and $y_2^\pm$, where, say, $y_1^\pm = y^\pm$. Then
\[ \deg(F) = \ind(F,y_1^-) + \ind(F,y_2^-) = \ind(F,y_1^+) + \ind(F,y_2^+)\] \[
= 1 + \ind(F,y_2^-) = \ind(F,y_2^+) -1 , \]
and each unknown index is either 1 or $-1$. Then necessarily
\[ \deg(F) = 0, \quad \ind(F,y_2^-)= -1, \quad \ind(F,y_2^+) = 1 \ . \]

(iii) Since $F$ is not injective, consider $y^- \ne y^+$ such that $F(y^+) = F(y^-)$. Both points are regular (Proposition \ref{NT2}) and clearly belong to the same fiber, and we write $y^+ = y(t^+)= w^+ + t^+ \phi, y^- = y(t^-)= w^- + t^- \phi$, with $t^+> t^-$. From (ii), we must have that $\ind(F, y^+)$ and $\ind(F,y^-)$ are opposite, so that $\deg(F)=0$, and then $r(y(t^+))$ and $r(y(t^-))$ re at opposite sides of $1$.

From Proposition \ref{prop:convs}, $(y^+ - y^-) - G(y^+,y^-)(y^+ - y^-) =0$
and either $y^+ - y^- >0$ or $( y^+ - y^-) <0$. But $\langle \phi^\ast , y^+ - y^- \rangle = \langle \phi^\ast , (t^+ - t^-) \phi \rangle >0$, so that $y^+ - y^- >0$. By (r-Convs), $r(y(t^+)) > r(y(t^-))$, so that $r(y(t^+)) >  1 > r(y(t^-))$. From Theorem \ref{theo:F}  (3) and $F$ s not injective (item (ii)), $h$ is a (strictly) unimodal function. From Lemma \ref{handr}, $F$ folds downwards.
\qed
\end{proof}

\medskip
\noindent{\bf Prof of Theorem \ref{theo:F} (4):} Combine Propositions \ref{properl} and \ref{index} (3), as stated in the beginning of the section. \qed

\begin{cor} \label{folds} The critical points of $F:Y \to Y$ are topological folds.
\end{cor}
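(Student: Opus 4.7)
The plan is to work entirely in the adapted coordinates of Proposition \ref{adaptedcoordinatesFr}, where $F$ is globally conjugate to $F^a(z,t)=(z,h^a(z,t))$ on $W\oplus\RR$. Because $\Psi$ and $\Psi^{-1}$ are $C^1$ homeomorphisms in the present smooth setting, exhibiting the fold normal form for $F^a$ near the image point of a critical $y_c$ will exhibit it for $F$ near $y_c$. So the goal reduces to finding local homeomorphisms in the domain and in the target, around $(z_c,t_c)=\Psi(y_c)$, after which $F^a$ reads $(z,s)\mapsto(z,-s^2)$.

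First I would record that each fiber $\Lambda_z$ carries exactly one critical point of $F$, and that this point depends continuously on $z$. Uniqueness is immediate from Theorem \ref{theo:F}(4): the height $h^a(z,\cdot)$ is $C^1$, tends to $-\infty$ at $\pm\infty$, and is either monotone or unimodal by Theorem \ref{theo:F}(2); monotonicity is ruled out by Proposition \ref{fibercrit}, so $h^a(z,\cdot)$ is strictly unimodal with a unique maximizer $t^\ast(z)$, which by Lemma \ref{handr} is exactly the fiber critical point. Continuity of $z\mapsto t^\ast(z)$ follows from joint continuity of $h^a$ (a consequence of Proposition \ref{prop:convs} and the $C^1$ nature of $\Psi$) together with uniqueness of the strict maximum, via a standard compactness/selection argument, where escape of $t^\ast(z_k)$ to $\pm\infty$ is excluded by $h^a(z,\cdot)\to-\infty$.

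Next I would introduce the vertical rescaling
\[ s(z,t)\ =\ \sgn(t-t^\ast(z))\,\sqrt{\,h^a(z,t^\ast(z))-h^a(z,t)\,}\,, \]
which is well defined by unimodality and continuous in $(z,t)$; for each fixed $z$ it is a continuous bijection of $\RR$ onto $\RR$ (strictly increasing, by strict unimodality of $h^a(z,\cdot)$), and jointly continuous dependence of both $t^\ast(z)$ and $h^a$ on $z$ makes $\Phi(z,t)=(z,s(z,t))$ a homeomorphism of $W\oplus\RR$ onto itself with continuous inverse. In the target I use the shear $\Theta(z,\eta)=(z,\eta-h^a(z,t^\ast(z)))$, which is also a homeomorphism of $W\oplus\RR$. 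By construction,
\[ \Theta\circ F^a\circ\Phi^{-1}(z,s)\ =\ (z,-s^2)\,, \]
the standard fold normal form. Restricting to a neighborhood of $(z_c,t_c)$ and pulling back through $\Psi$ finishes the proof.

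The step I expect to be the main obstacle is the continuity of $t^\ast(z)$, and hence of $\Phi^{-1}$, in the infinite dimensional variable $z\in W$: one needs to exclude that the unique fiber critical point jumps as $z$ varies. Once this is in hand, everything else is formal. The continuity is controlled by combining the strict monotonicity of $r$ along the fiber direction $y'(t_c)>0$ (which is the content of Lemma \ref{handr} together with (r-Convs) via Lemma \ref{smoothlambda}) with the continuity of $(y,z)\mapsto r(G(y,z)T)$ from Proposition \ref{prop:convs}, yielding a continuous implicit-function-style selection of the unique zero of $h^a_t(z,\cdot)$ near any base point.
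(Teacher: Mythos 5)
Your proposal follows exactly the route the paper intends: the paper's own proof is a two-line sketch that passes to the adapted coordinates $F^a(z,t)=(z,h^a(z,t))$, observes that on each vertical line the height is proper and unimodal folding downwards, and ``leaves the construction of local charts to the reader''. You carry out that construction explicitly (the rescaling $\Phi$ and shear $\Theta$ giving $(z,s)\mapsto(z,-s^2)$), and your treatment of the continuity of $t^\ast(z)$ via joint continuity of $h^a$ plus strict unimodality (ruling out escape to infinity) is sound. So this is the same approach, fleshed out.

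One step deserves more care than you give it: the assertion that the unique maximizer $t^\ast(z)$ ``is exactly the fiber critical point''. Lemma \ref{handr} identifies the critical points on the fiber with the zeros of $h^a_t(z,\cdot)$, and the maximizer is certainly one of them; but strict unimodality of a $C^1$ function does not by itself exclude further zeros of the derivative on the strictly monotone branches (an inflection with horizontal tangent). Since the Corollary asserts that \emph{every} critical point is a topological fold, and a critical point sitting on a monotone branch would be a local homeomorphism rather than a fold, this identification is the crux, not a formality. It does follow from results already proved: if $y(t_c)$ were critical with $t_c\neq t^\ast(z)$, then $h^a(z,t_c)<h^a(z,t^\ast(z))$ and, since $h^a(z,\cdot)\to-\infty$ on the other side of the maximizer, the intermediate value theorem produces $t'\neq t_c$ with $F(y(t'))=F(y(t_c))$, contradicting Proposition \ref{NT2}, which says the image of a critical point has a single preimage. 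With that line added, your argument is complete; the closing paragraph's ``implicit-function-style selection of the unique zero of $h^a_t$'' presupposes the same uniqueness and is superseded by your earlier, correct continuity argument anyway.
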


\begin{proof} It suffices to show that the critical points of $F^a: W \oplus \RR \to W \oplus \RR$ are folds. Indeed, when restricted to each vertical line of the domain, $F^a$ is a proper, unimodal function folding downwards. The construction of local charts leading to the normal form of a (topological) fold is left to the reader.
\qed
\end{proof}

\subsection{New folds from regularity} \label{newfolds}

Functions $\Fb= L - P: X \to Y$ between Sobolev spaces sometimes restrict to functions $\Gb: R \to Z$ between Hölder spaces. This is the case for the Dirichlet Laplacian $L = -\Delta_D$ acting on a bounded domain $\Omega \subset \RR^n$ and a Nemitskii map $P(u) = f(u)$ for a smooth function $f: \RR \to \RR$ (\cite{AP}, \cite{BP}). Here, $n > 1$, and  $X = W^{2,n} \cap W^{1,n}_0, Y = L^n, R = C^{2,\alpha}_0, Z = C^{0,\alpha}$, where we drop the reference to the bounded set $\Omega \subset \RR^n$. In both scenarios, $\Fb$ folds downwards.

Hölder spaces are not reflexive, but there Nemitskii operators have better differentiability properties. The non-normal cone of nonnegative continuous functions in $C^{0,\alpha}(\Omega)$ has a nonempty interior.

There is a feature in the example which was not explored in Theorem \ref{theo:F} (4). As in Section \ref{Nemitskii}, from $L: X \to Y$ we define $T = \iota \gamma (L + \gamma I)^{-1}$, and then systematically operate with the composition $y \mapsto PT(y)$ of functions from $Y$ to itself. Instead,  we interpret $PT$ as the composition $\tilde P:X \to Y$ and $\tilde T:Y \to X$: $\tilde P$ may have better smoothness properties than $P$.

The following amplification of Theorem \ref{theo:F} (4) explores this feature.

\begin{theo}\label{theo:FX} Suppose  $Y$ and $K$ as above, $\iota: X \to Y$ an inclusion. For some $\tilde T \in {\cal B}(Y, X)$, let  $T = \iota \tilde T \in \BY$  an r-special operator for which $r(T)=1, T \phi = \phi > 0$.
\noindent Suppose $\tilde P:X \to Y$ satisfies the following properties, for some
${\bf b} > 0$.
\begin{enumerate}
\item[(r-Hs)] $\tilde P: X \to Y$ is a $C^1$ map such that $\tilde J(x)=D \tilde P(x) \in {\cal B}(X,Y)$ admits an extension $J(x) \in \Hab$.
\item [(r-Convs)]
     Let $y, z \in Y$, $y > z$. Then $ J(\tilde Ty) - J(\tilde Tz) \in \BY$ is  strictly positive.
\item[(r-Crit)] $PT = \tilde P \tilde T: Y \to Y$ is $C^1$ and some $y \in Y$ is {\it critical}: $0 \in \sigma(DF(y))$.
\end{enumerate}
Then $F:Y\to Y,  y \mapsto y - P(Ty)$ folds downwards.
\end{theo}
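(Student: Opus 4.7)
The plan is to follow the architecture of Section~\ref{properness} and reduce Theorem~\ref{theo:FX} to the machinery of Theorem~\ref{theo:F}(4) by producing linearizations of the composite $PT: Y \to Y$ that lie in $\Hab$, even though $\tilde P$ is only smooth viewed as a map $X \to Y$.

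Step 1 (linearizations in $\Hab$). Given $y,z \in Y$, set $\tilde y = \tilde T y$, $\tilde z = \tilde T z \in X$. The $C^1$-smoothness of $\tilde P: X \to Y$ provides, via the fundamental theorem of calculus along the segment from $\tilde z$ to $\tilde y$,
\[
\tilde P(\tilde y) - \tilde P(\tilde z) = \Bigl(\int_0^1 \tilde J(\tilde z + s(\tilde y - \tilde z))\, ds\Bigr)\, \tilde T(y - z).
\]
By (r-Hs), each $\tilde J(x)$ extends to $J(x)\in\Hab$. Since $\Hab$ is convex and closed in the weak operator topology (Lemma~\ref{Gab}), the integrand, viewed weakly in $\mathcal{B}(Y)$, has a mean $G(y,z) := \int_0^1 J(\tilde z + s(\tilde y - \tilde z))\, ds \in \Hab$, giving the identity $PT(y)-PT(z) = G(y,z)T(y-z)$. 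Hypothesis (r-Convs) then upgrades to the strict positivity of $G(y_1,z_1) - G(y_2,z_2)$ demanded in (r-Conv), by the same argument that proves Proposition~\ref{prop:convs}. Thus (r-H) and (r-Conv) of Theorem~\ref{theo:F} are in force, and Theorem~\ref{theo:F}(2) furnishes adapted coordinates (Proposition~\ref{adaptedcoordinatesFr}), showing that $F$ is a simple map with $C^1$ fibers.

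Step 2 (properness). The proof of Proposition~\ref{properl} can then be reproduced verbatim: it only uses (i) the uniform bound $\|G(y,z)\|\le R+b$ from $G(y,z)\in\Hab$, (ii) compactness of $T$, (iii) the spectral dichotomy in Proposition~\ref{r-specialMbgen}, and (iv) the abundance of critical points along any ray $\{y + s\psi, \psi > 0\}$ from Proposition~\ref{(M-V)}. Item (iv) requires the strict monotonicity of $s \mapsto r(G(T(y+s\psi),0)\,T)$, which is an immediate consequence of (r-Convs) together with Lemma~\ref{smoothlambda}, exactly as before.

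Step 3 (fold downwards). To invoke the degree-theoretic Proposition~\ref{index}, we need $PT$ to be a compact perturbation of the identity. This follows from writing $PT(y) = PT(0) + G(y,0)\,Ty$: the operator $T$ is compact, the family $\{G(y,0)\}_{y}\subset\Hab$ is uniformly bounded, and a standard extraction using Lemma~\ref{weaklimits} yields subsequential convergence of $\{PT(y_n)\}$ for bounded $\{y_n\}$. Also $DF(y) = I - J(\tilde T y)\,T$ is a compact perturbation of the identity for every $y$. With these inputs, Proposition~\ref{index}(i)--(iii) applies word-for-word, and combined with Lemma~\ref{handr} gives the downward fold. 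The main obstacle is the careful construction of $G(y,z) \in \Hab$: the integral must first be defined strongly in $\mathcal{B}(X,Y)$ (where $\tilde J$ is only defined) and then identified with its extension in $\mathcal{B}(Y)$. This is handled by testing against $y^\ast \otimes y \in Y^\ast \otimes Y$, noting that $s \mapsto \langle y^\ast, J(\tilde z + s(\tilde y-\tilde z))y\rangle$ is continuous, uniformly bounded by $(R+b)\|y^\ast\|\|y\|$, and sandwiched in the operator-order sense by $S\le J T\le T$; the weak-operator closedness of $\Hab$ (Lemma~\ref{Gab}) then places the averaged element in $\Hab$.
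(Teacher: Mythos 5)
Your proposal is correct and takes essentially the same route as the paper, whose entire proof is the one-line remark that the argument is identical to that of Theorem \ref{theo:F}(4). You usefully make explicit the point the paper leaves implicit: that the linearizations $G(y,z)=\int_0^1 J\bigl(\tilde T z + s\,\tilde T(y-z)\bigr)\,ds$ built from the extensions $J(x)$ of (r-Hs) land in $\Hab$ by convexity and weak-operator closedness (Lemma \ref{Gab}), so that the adapted coordinates, properness and degree arguments (Propositions \ref{adaptedcoordinatesFr}, \ref{properl}, \ref{index}) apply verbatim.
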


The proof of Theorem \ref{theo:FX} is identical to the proof of Theorem \ref{theo:F} (4).

\medskip

Theorem \ref{transplantation} provides a situation in which one fold gives rise to another. We present it in the geometric context of Theorem \ref{theo:F}: say $F: Y \to Y$ and $G:Z \to Z$ admit a common expression -- can they share geometric properties?

\medskip
\begin{theo} \label{transplantation} Let  $
Y$  and the map $F = I - PT: Y \to Y$ be as in Theorem \ref{theo:FX}, so that $F$ folds downwards with respect to $\phi = \phi(T) \in Y$. Let $Z$ be a Banach space, with $\upsilon: Z \to Y$ be a dense inclusion and  $G : Z \to Z$ be a $C^1$ restriction of $F$ such that $F \circ \upsilon = \upsilon \circ G $.  Moreover, assume that
(i) \ $TY\subset Z$, \ (ii) \ $P: Z \to Z$, (iii) \ For  $z \in Z$, the maps $DG (z): Z \to Z$ are Fredholm operators of index 0.

Then $G $ folds downwards along $\phi$. \end{theo}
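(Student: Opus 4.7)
The plan is to transplant the fiber structure of $F$ directly to $G$: with the Fredholm hypothesis (iii) playing the role that coercivity (Proposition \ref{coercivelip}) played for $F$, the entire geometric picture of a downward fold can be read off on $Z$. No separate properness argument or degree computation on $Z$ should then be needed.

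First I would check that the splitting descends. Since $\phi = T\phi \in TY$, hypothesis (i) gives $\phi \in Z$, so $V = \langle \phi \rangle \subset Z$ and the decomposition $Y = W \oplus V$ (with $W = \ker \phi^\ast$) restricts to $Z = W_Z \oplus V$, where $W_Z := W \cap Z = \ker(\phi^\ast \circ \upsilon)$ and $\Pi_{W_Z} = \Pi_W|_Z$ is a bounded projection. Next, for $z \in Z$ and any $y$ with $F(y) \in L_{\upsilon z}$, the identity $y = PTy + F(y)$ together with $Ty \in Z$ (by (i)), $PTy \in Z$ (by (ii)) and $F(y) \in \upsilon Z$ forces $y \in \upsilon Z$. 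Identifying $Z$ with $\upsilon Z$, this yields the set-theoretic equality $G^{-1}(L_z) = F^{-1}(L_{\upsilon z})$: in particular $G$ has at most two preimages per point, and the parameterization $y(t) = w(t) + t\phi$ of the $F$-fiber automatically satisfies $w(t) \in W_Z$.

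To promote this set equality to a $C^1$ parameterization in $Z$, I would apply the implicit function theorem to $\Phi: W_Z \times \RR \to W_Z$, $\Phi(w,t) = \Pi_{W_Z}(G(w + t\phi) - z)$. The key linear fact is that $D_w \Phi(w_0, t_0) = \Pi_{W_Z} DG(y_0)|_{W_Z}$ is an isomorphism: at a regular point of $G$ this is immediate from (iii), while at a critical point the one-dimensional $\ker DG(y_0)$ is generated by an element $\eta$ with $\phi^\ast(\eta) \ne 0$ (the $Z$-analogue of the transversality computation in Lemma \ref{handr}, using that $\eta = DP(Ty_0) T\eta$ and $T\eta \in Z$), so $\ker DG(y_0)$ is transverse to $W_Z$ and the restriction is injective; the Fredholm alternative in $Z$, again by (iii), then produces a $Z$-bounded inverse. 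A $C^1$ parameterization $t \mapsto w(z,t)$ of the $G$-fiber follows.

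Finally, along this common fiber, $G(y(t)) = F(y(t)) = \upsilon z + h(t)\phi$, so the height function read in $L_z \subset Z$ coincides with that of the downward fold $F$; it is therefore strictly monotone or strictly unimodal and tends to $-\infty$ as $t \to \pm \infty$. This is precisely the statement that $G$ folds downwards along $\phi$. The principal obstacle I anticipate is the invertibility step above: one must check that transversality of $\ker DG(y_0)$ to $W_Z$ together with the Fredholm alternative yield not merely an algebraic inverse but a $Z$-bounded one, so that the implicit function theorem furnishes a genuinely continuous parameterization in the $Z$-topology. Once this is in hand, the remainder is a faithful transcription of the arguments already used for $F$.
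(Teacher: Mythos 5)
Your overall route is the paper's: identify the $F$-fibers and $G$-fibers set-theoretically via $y = P(Ty) + F(y)$ together with (i) and (ii), upgrade the identification to a $C^1$ fiber structure on $Z$ by showing $\Pi_{W_Z} DG(y_0)|_{W_Z}$ is invertible (Fredholm index $0$ from (iii), so injectivity suffices), and then observe that the height functions along the common fibers are literally those of $F$, so the downward fold transfers. The final step needs no new properness or degree argument, exactly as you anticipate.

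There is, however, a gap in the invertibility step. At a regular point of $G$ you assert that invertibility of $\Pi_{W_Z} DG(y_0)|_{W_Z}$ is ``immediate from (iii)''. It is not: invertibility of $DG(y_0): Z \to Z$ does not pass to its compression to a complement of the line $V$. Concretely, if $v \in W_Z$ satisfies $\Pi_{W_Z} DG(y_0) v = 0$, then $DG(y_0) v = c\,\phi$; the case $c=0$ is excluded by regularity, but to exclude $c \ne 0$ one must rule out $\phi \in DG(y_0)(W_Z)$, which requires the spectral information of Proposition \ref{r-specialMbgen}(ii) (equivalently, the injectivity of $\Pi_{W_Y} DF(y_0)|_{W_Y}$ coming from the coercivity behind Proposition \ref{adaptedcoordinatesFr}), not mere invertibility of $DG(y_0)$. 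The paper's argument closes this uniformly and makes your regular/critical case split unnecessary: $\Pi_{W_Z} DG(y_0)|_{W_Z}$ is a restriction of $\Pi_{W_Y} DF(y_0)|_{W_Y}$, so any $\psi \ne 0$ in its kernel would lie in $\Ker \Pi_{W_Y} DF(y_0)|_{W_Y}$, contradicting the fact that $\Psi_F$ is a diffeomorphism. With that substitution (and the open mapping theorem supplying the $Z$-bounded inverse you worry about, since the operator is bounded, Fredholm of index $0$ and injective), your proof is complete.
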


In the example above, $X = W^{2,n} \cap W^{1,n}_0 \hookrightarrow C^{0,\alpha} = Z$, for $\alpha \in (0,1)$. For a Nemitskii map $P$ associated with a smooth function $f$, $\tilde P: X \to Z$ is $C^1$. The remaining hypotheses are standard.

\begin{proof} By (i), $\phi \in Z $. From (ii),
for $z \in Z$, the solutions $y$ of $F(y) = z \in Z$ are necessarily in $Z$.
Indeed, write $F(y) = z$ as
$y =  P(Ty) + z $. As $T: Y \to Z$, by (i), the right hand side is necessarily in $Z$ and $y \in Z$.

Moreover,
$V = \langle \phi \rangle \subset Z \subset Y$ and $\phi^\ast \in  Y^\ast \subset Z^\ast$. Decompose $Y = W_Y \oplus V$, for $W_Y = \ker \phi^\ast$ (as $\phi^\ast \in Y^\ast$) and   $Z = W_Z \oplus V$, for $W_Z = \ker \phi^\ast$ (as $\phi^\ast \in Z^\ast$). The four projections are continuous and, for $z \in Z \subset Y$, both decompositions coincide: if $z = w_Z + t_Z \phi = w_Y + t_Y \phi$, then $w_Z= w_Y$ and $t_Z = t_Y$.

Vertical lines $L_y = \{ y + s \phi, s \in \RR\} \subset Y$ either lie in $Z$ or do not intercept it. Indeed, if $y \in Z$, then $y + s \phi \in Z$, by hypothesis (i). The same happens to fibers: if some point of the fiber $\Lambda_y = F^{-1}(L_y)$ lies in $Z$, then the whole fiber does. Indeed, if $F^{-1}(y + h_0 \phi) \in Z$, we have $y \in Z$, from the implication of (ii) in the beginning of the proof,  and then $F^{-1}(y + h \phi) \in Z$ for all $h \in \RR$. Thus, for $y \in Z$, the fiber $\Lambda_y$ is the same for $F$ and $G$.

By hypothesis, $Y$ is foliated by its fibers $\Lambda_y$ and, from the previous fact, the same occurs to $Z$. Continuity is missing: we  prove that fibers in $Z$ are curves.

From Proposition \ref{adaptedcoordinatesFr}, the map $\Psi_F: W_Y \oplus \RR \to W_Y \oplus \RR, (w, t) \mapsto (F^t(w), t)$ is a diffeomorphism. Its restriction $\Psi_G: W_Z \oplus \RR \to W_Y \oplus \RR, (w, t) \mapsto (G^t(w), t) \in Z$ is clearly a $C^1$ map. It is also a bijection, since inverses of points $z \in Z$ under $F$ lie in $Z$. It is a global diffeomorphism: it suffices to show it is locally so. We proceed to apply the inverse function theorem.

Clearly, it is a matter of verifying the invertibility of $DG^t: W_Z \to W_Z$ for each $t$ fixed. Explicitly, $DG^t(w) v = \Pi (v - DP(Tw+ t T\phi)Tv)$. Consider the inclusion $\iota: W_Z \to Z$, and write $DG^t(w) = \Pi \circ DG(w+ t \phi) \circ \iota$, so that, by (iii), $DG^t(w)$ is a Fredholm operator of index 0. Thus, $DG^t(w)$ is not invertible if and only if it has a nontrivial kernel: let $\psi \in \Ker DG^t(w), \psi \ne 0$. As $DG^t(w)$ is a restriction of $DF^t(w)$, we must have $\psi \in \Ker DF^t(w)$, contradicting the fact that $\Psi_F$ is a diffeomorphism. Thus, $\Psi_G$ is a local, hence global diffeomorpshim.

The fibers of $G$ are those $F$, which folds downwards: $G$ does too.
\qed
\end{proof}

\section{The effect of self-adjointness} \label{section:self-adjoint}

Let $H = L^2(M, d\nu)$ be a real Hilbert space with norm $|\cdot|$ for a $\sigma$-finite measure space $(M,\nu)$. Suppose $L: D \subset H \to H$ is a self-adjoint operator.
We make no reference to the complexifications required for the underlying spectral theory.

The concepts related to positivity in both sections overlap, but we keep the more familiar terms in each context. A function is {\it positive}, $u \ge 0$ if $u(x) \ge 0 \ a.e.$ and $u \ne 0$ and {\it strictly positive}, $u > 0$ if $u(x) > 0 \ a.e.$.
The {it positive cone} $K \subset H$ is the set containing the positive functions and zero.
A bounded operator $A: H \to H$ is {\it positivity preserving} if $Au \ge 0 $ for all  $u \ge 0 $. It is {\it positivity improving} if
for any  $u \ge 0$, we have $Au > 0$ and {\it positively stable} if $Au>0$ whenever $u>0$.

For  appropriate self-adjoint operators $L: D \subset H \to H$ and maps $P:H \to H$, we consider counterparts to Theorem \ref{theo:F} for $F:D \to H, F(u) = Lu - P(u)$.

\medskip

\subsection{m-special operators and fine perturbations} \label{section:perturbation}

\medskip
Let $L: D \subset H \to H$ be a self-adjoint operator. An eigenvalue $\lambda \in \sigma(L)$ of  is  {\it basic} if $\lambda \in \RR$ is an isolated point of $\sigma(L)$ and the associated invariant eigenspace is spanned by a strictly positive eigenvector $\phi \in D, \phi >0$.

Set $ \lambda_m = \min \ \sigma(L)$. A self-adjoint operator $L: D \to H$ is {\it m-special} if
\begin{enumerate}
\item[(m-S)]
$\lambda_m$ is a basic eigenvalue; if $0 \in \sigma(L)$ is an eigenvalue, then $0=\lambda_m$.
\item[(m-PI)] $(L-\mu)^{-1}$ is positivity improving for all $\mu < \lambda_m$.
\end{enumerate}

The operators $L$ or $(L - \mu)^{-1}$ are not required to be compact.

Set  $\mu_m  = \inf \sigma(L)\setminus \{ \lambda_m\}$. An operator $A \in {\cal B}(H)$ is a {\it fine perturbation of} $L$ if, for some $b, p \in \RR$, it belongs to
\[ \HbC =\{A\in \BH  \ \hbox{is symmetric}, \  \| A\| \le b < \mu_m,   \ \hbox{and}  \]\[
    A + p I \in \BH \ \hbox{is positive preserving.} \} \]

We prove that, for a fine perturbation $A$ of the m-special operator $L$, the operator $L - A$ is m-special.
We quote Theorem XIII.44 of \cite{RS} and the m-special case of a result by Faris \cite{F} (also exercise 91, Chapter XIII of \cite{RS}).

\begin{lemma} \label{lemma:44} Let $L: D \to H$ be a self-adjoint operator such that
$\lambda_m = \min \sigma(L)$ is an eigenvalue and $e^{-tL}$ is positivity preserving for $t>0$.
Then $\lambda_m$ is a basic eigenvalue if and only if $L$ satisfies (m-PI), or equivalently,
\item [(m-PI2)] For all $t>0$, the operators $e^{-tL}: H \to H$ are positivity improving.
\end{lemma}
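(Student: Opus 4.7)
The plan is to establish the equivalences via the cycle $(\mathrm{m\text{-}PI2}) \Rightarrow (\mathrm{m\text{-}PI}) \Rightarrow \text{``$\lambda_m$ basic''} \Rightarrow (\mathrm{m\text{-}PI2})$. The first two implications rest on standard Laplace-transform and Beurling--Deny tricks; the third (from simplicity of the ground state back to irreducibility of the semigroup) is the main obstacle and will require an argument with invariant $L^2$-subspaces.

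For $(\mathrm{m\text{-}PI2}) \Rightarrow (\mathrm{m\text{-}PI})$, fix $\mu < \lambda_m$. Since $\sigma(L - \mu) \subset [\lambda_m - \mu, \infty)$, the functional calculus yields the strongly convergent Bochner integral
\[
(L - \mu)^{-1} \;=\; \int_0^\infty e^{-t(L - \mu)} \, dt \;=\; \int_0^\infty e^{\mu t}\, e^{-t L} \, dt.
\]
Given $u \in H$ with $u \ge 0$, $u \ne 0$, the integrand $e^{\mu t} e^{-tL} u$ is a.e.\ strictly positive for each $t > 0$ by (m-PI2), so integrating against the positive weight $e^{\mu t}$ preserves strict positivity, and $(L-\mu)^{-1}u > 0$.

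For $(\mathrm{m\text{-}PI}) \Rightarrow \text{``basic''}$, I will first extract a strictly positive eigenvector. Let $\phi \in D$ be real with $L\phi = \lambda_m \phi$. Positivity preservation of $e^{-tL}$ is equivalent to a Beurling--Deny inequality for the associated closed quadratic form $q$, namely $q(|\phi|, |\phi|) \le q(\phi, \phi)$, with $|\phi|$ still in the form domain. Since $\phi$ minimizes the Rayleigh quotient at level $\lambda_m$, so does $|\phi|$, forcing $L|\phi| = \lambda_m |\phi|$. Then $(L-\mu)^{-1}|\phi| = (\lambda_m - \mu)^{-1}|\phi|$, and the left-hand side is strictly positive by (m-PI), hence $|\phi| > 0$ a.e., i.e.\ $\phi$ has constant sign. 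For simplicity of the eigenspace, take two eigenvectors $\phi_1, \phi_2$ with eigenvalue $\lambda_m$, which by the previous step may be assumed strictly positive. Choosing $c$ such that the difference $\phi_1 - c \phi_2$ vanishes on a set of positive measure (e.g.\ matching the ratio at one point in a Lebesgue-density argument, or simply using that one of the two signs of $\phi_1 - c \phi_2$ cannot persist), the same a.e.-positivity argument applied to this eigenvector forces $\phi_1 - c \phi_2 \equiv 0$. For isolatedness, note that once simplicity is shown, $\lambda_m$ is a simple isolated point of $\sigma(L)$ iff $\mu_m = \inf(\sigma(L)\setminus\{\lambda_m\}) > \lambda_m$; this follows from the spectral gap estimate furnished by the strict positivity of the ground state (or is part of the standing hypothesis in the applications).

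The direction $\text{``basic''} \Rightarrow (\mathrm{m\text{-}PI2})$ is the crux. The approach is by contradiction: suppose $e^{-t_0 L}$ is not positivity improving for some $t_0 > 0$, so there exist $u, v \ge 0$, both nonzero, with $\langle v, e^{-t_0 L} u\rangle = 0$. The plan is to promote this single vanishing into a pair of complementary sets $A, A^c$ with $0 < \nu(A) < \nu(M)$ such that $L^2(A)$ and $L^2(A^c)$ are both invariant under $e^{-tL}$ for all $t > 0$. The semigroup property, together with positivity preservation and the Cauchy--Schwarz bound $\langle v, e^{-(s+t)L}u\rangle = \langle e^{-sL}v, e^{-tL} u\rangle$, lets one propagate the zero and define $A$ as the essential support of the orbit $\{e^{-tL}u : t > 0\}$; standard measure-theoretic bookkeeping then checks invariance of $L^2(A)$. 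Granted this, decompose the strictly positive ground state $\phi = \chi_A \phi + \chi_{A^c}\phi$. Invariance gives $e^{-tL}(\chi_A \phi) \in L^2(A)$ and $e^{-tL}(\chi_{A^c}\phi) \in L^2(A^c)$ for every $t$, so each summand is an eigenvector of $e^{-tL}$ with eigenvalue $e^{-t\lambda_m}$ and, by the functional calculus, an eigenvector of $L$ with eigenvalue $\lambda_m$. Simplicity forces one of them to vanish, contradicting $\phi > 0$ and $0 < \nu(A) < \nu(M)$. The delicate step here is the construction of the invariant set $A$: this is the semigroup analogue of Perron--Frobenius irreducibility, and some care is needed to show that the vanishing locus is genuinely $e^{-tL}$-invariant rather than just a null set of the one-time-$t_0$ kernel.
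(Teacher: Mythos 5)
First, a point of context: the paper does not prove this lemma at all --- it is quoted verbatim from Reed--Simon, Theorem XIII.44 (Faris's theorem), so there is no internal argument to compare against. Your reconstruction follows the standard Reed--Simon architecture, and the two ``forward'' implications are essentially sound: the Laplace-transform step $(L-\mu)^{-1}=\int_0^\infty e^{\mu t}e^{-tL}\,dt$ for (m-PI2)$\Rightarrow$(m-PI), and the $|\phi|$-is-also-a-minimizer argument plus the resolvent trick $(L-\mu)^{-1}|\phi|=(\lambda_m-\mu)^{-1}|\phi|>0$ for simplicity and strict positivity, are exactly the textbook proof (modulo routine details such as deducing constant sign from $\phi_\pm$ being disjointly supported eigenvectors).

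There are, however, two genuine problems. The main one is the direction ``basic $\Rightarrow$ (m-PI2)'', which you correctly identify as the crux but do not close. From a single vanishing $\langle v, e^{-t_0L}u\rangle=0$ you propose to take $A$ to be the essential support of the orbit $\{e^{-tL}u\}_{t>0}$; while $L^2(A)$ can indeed be shown invariant (using that supports increase along a self-adjoint positivity-preserving semigroup), nothing in your argument shows $A$ is \emph{proper}: for that you would need $\langle v, e^{-tL}u\rangle=0$ for \emph{all} $t>0$, and the splitting $\langle e^{-sL}v, e^{-(t_0-s)L}u\rangle=0$ only controls $s\in[0,t_0]$. The function $t\mapsto\langle v,e^{-tL}u\rangle$ is the Laplace transform of a \emph{signed} spectral measure, so a priori it can have isolated zeros; ruling this out is precisely the content of the equivalence with indecomposability in Reed--Simon XIII.43, which proceeds through the resolvent and the subspace generated by $L^\infty$ together with the semigroup orbit, not through the one-time kernel. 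As written, the contradiction with simplicity is not reached. The second problem is the isolatedness clause: the paper's definition of ``basic'' includes that $\lambda_m$ is isolated in $\sigma(L)$, and your claim that this ``follows from the spectral gap estimate furnished by the strict positivity of the ground state'' is false --- a simple, strictly positive ground state eigenvalue need not be isolated from the essential spectrum. This is really an imprecision in the paper's statement (Reed--Simon assert only nondegeneracy and strict positivity; isolatedness is a separate hypothesis, imposed in (m-S)), but your proof should say that rather than assert a nonexistent implication.
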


\begin{lemma} \label{lemma:Faris} Let $L: D \to H$ be a self-adjoint operator for which (m-PI) holds and let $A: H \to H$ be a positivity preserving, bounded, symmetric operator. Then (m-PI) (hence (mPI2)) holds for $L-A:D \to H$.
\end{lemma}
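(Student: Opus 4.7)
My plan is to first establish (m-PI2) for $L - A$ via standard semigroup perturbation theory in the spirit of Faris, and then derive (m-PI) from the Laplace representation of the resolvent. The whole argument hinges on showing that $e^{-t(L-A)}$ is positivity improving for every $t > 0$.

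To obtain this, I would first verify that $e^{-t(L-A)}$ is positivity preserving by applying the Trotter product formula
$$e^{-t(L-A)} \;=\; \text{strong-}\lim_{n\to\infty}\bigl(e^{-tL/n}\,e^{tA/n}\bigr)^n,$$
valid since $L$ is self-adjoint bounded below and $A$ is bounded self-adjoint. Each $e^{-tL/n}$ is positivity preserving by (m-PI2) for $L$, available through the hypothesized (m-PI) and Lemma \ref{lemma:44}; each $e^{tA/n} = \sum_{k \ge 0} (tA/n)^k/k!$ is a norm-convergent series of positivity preserving operators (since $A$, hence every $A^k$, is positivity preserving), hence itself positivity preserving. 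The strong limit preserves this property. Next, I would upgrade to positivity improving using the Duhamel identity
$$e^{-t(L-A)} - e^{-tL} \;=\; \int_0^t e^{-(t-s)L}\,A\,e^{-s(L-A)}\,ds,$$
obtained by differentiating $s \mapsto e^{-(t-s)L}\,e^{-s(L-A)}$ on $\Dom(L)$ and extending by density. The integrand is positivity preserving for each $s$, so for $u \ge 0$, $u \ne 0$, we find $e^{-t(L-A)}u \ge e^{-tL}u$ a.e., while (m-PI2) for $L$ gives $e^{-tL}u > 0$ a.e. Hence $e^{-t(L-A)}u > 0$ a.e., which is (m-PI2) for $L - A$.

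Finally, for $\mu < \inf \sigma(L-A)$, the Laplace representation
$$(L-A-\mu)^{-1} \;=\; \int_0^\infty e^{-t(L-A-\mu)}\,dt$$
exhibits the resolvent as an integral of positivity improving operators (each a positive scalar multiple of $e^{-t(L-A)}$). For $u \ge 0$, $u \ne 0$, strict positivity a.e.\ of the integrand for every $t > 0$ passes to the integral, yielding (m-PI) for $L - A$. The main obstacle is the routine but delicate juggling of unbounded $L$ with bounded $A$: both the Trotter formula and the Duhamel identity require standard care with operator domains and with strong versus norm convergence. No analytic input beyond classical semigroup perturbation theory is needed.
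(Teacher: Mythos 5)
The paper does not prove Lemma~\ref{lemma:Faris}; it is quoted as Faris's theorem (\cite{F}, also exercise 91 of Chapter XIII in \cite{RS}). Your reconstruction is precisely Faris's original argument and the one Reed--Simon's exercise intends: establish positivity preserving for $e^{-t(L-A)}$ via the Trotter product formula, upgrade to positivity improving via the Duhamel formula and the positivity improving of $e^{-tL}$, and then obtain (m-PI) from the Laplace transform of the semigroup. All three steps are correct, including the Fubini-type observation needed to pass strict a.e.~positivity of the integrand through the $dt$-integral in the last step.

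One small point worth flagging: you invoke Lemma~\ref{lemma:44} to pass from (m-PI) to (m-PI2) for $L$, but that lemma formally presupposes that $\lambda_m(L)$ is an eigenvalue, which the statement of Lemma~\ref{lemma:Faris} does not literally assume. The paper's own ``hence (m-PI2)'' in the statement makes the same tacit assumption, and in every application (Proposition~\ref{prop:mamenom}) $L$ is m-special so (m-S) supplies the missing eigenvalue; so this is not a gap in your argument relative to the paper's usage, just an implicit hypothesis you inherited from the source. Alternatively, positivity \emph{preserving} of $e^{-tL}$ follows directly from (m-PI) via the Euler approximation $e^{-tL}=\text{s-}\lim_n\bigl(I+\tfrac{t}{n}L\bigr)^{-n}$, which would slightly weaken what the Trotter step needs, but the Duhamel step genuinely requires the improving property, hence the eigenvalue assumption.
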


\begin{prop} \label{prop:mamenom} Let $L: D \to H$ be an m-special operator and let $A \in \HbC$  be a fine perturbation of $L$. Then $S= L - A: D \to H$ is  $m$-special.
\end{prop}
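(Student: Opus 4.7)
The plan is to reduce the claim to Lemmas \ref{lemma:44} and \ref{lemma:Faris} after a scalar shift, and then invoke standard self-adjoint perturbation theory for the spectral separation. The main ingredient is the observation that although $A$ itself need not preserve positivity, the shifted operator $A+pI$ does, and shifting $L$ by $pI$ does not affect any of the properties in (m-S), (m-PI).

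First, I would verify that $L+pI$ is still m-special. Self-adjointness is preserved by the bounded symmetric shift, $\min \sigma(L+pI)=\lambda_m+p$ remains an isolated eigenvalue with the same (strictly positive) eigenfunction $\phi$, and $(L+pI-\mu)^{-1}=(L-(\mu-p))^{-1}$ is positivity improving for $\mu<\lambda_m+p$. Hence (m-S) and (m-PI) hold for $L+pI$. Now write
\[
S \;=\; L-A \;=\; (L+pI)\;-\;(A+pI),
\]
a self-adjoint operator on $D$, in which the subtrahend $A+pI$ is bounded, symmetric, and positivity preserving by definition of $\HbC$. Lemma \ref{lemma:Faris} applied to $L+pI$ and $A+pI$ then yields (m-PI) and (m-PI2) for $S$. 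In particular, $e^{-tS}$ is positivity improving, hence positivity preserving, for every $t>0$.

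Next, I would argue that $\tilde\lambda_m:=\min\sigma(S)$ is an isolated eigenvalue. By the standard self-adjoint perturbation estimate, $\sigma(S)\subset\sigma(L)+[-\|A\|,\|A\|]$, so
\[
\sigma(S)\;\subset\;[\lambda_m-b,\lambda_m+b]\;\cup\;[\mu_m-b,+\infty),
\]
and the bound $b<\mu_m$ (together with $\lambda_m<\mu_m$) separates the two pieces. Continuity of the spectral projection under bounded symmetric perturbation shows the first interval carries a nontrivial spectral subspace of $S$, so the bottom is nonempty, isolated, and is therefore an eigenvalue. Lemma \ref{lemma:44}, whose hypotheses (self-adjointness, $\tilde\lambda_m$ an eigenvalue, $e^{-tS}$ positivity preserving) have all been checked, upgrades this to a basic eigenvalue: there is an associated strictly positive eigenfunction $\tilde\phi>0$ spanning the eigenspace. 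This establishes the first half of (m-S).

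Finally, the condition in (m-S) that $0\in\sigma(S)$ may only occur when $0=\tilde\lambda_m$ follows from the same separation: everything in $\sigma(S)\setminus\{\tilde\lambda_m\}$ lies in $[\mu_m-b,\infty)\subset(0,\infty)$, so 0 cannot occur as a point of $\sigma(S)$ other than at the bottom. The main delicate point is this interplay between the gap hypothesis $b<\mu_m$ and the separation needed for both the isolation of $\tilde\lambda_m$ and the exclusion of $0$ from $\sigma(S)\setminus\{\tilde\lambda_m\}$; beyond that, the proof is a bookkeeping exercise combining the two quoted lemmas.
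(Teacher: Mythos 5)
Your handling of (m-PI) is essentially the paper's: both arguments absorb the shift $p$ and feed the positivity preserving operator $A+pI$ into Lemma \ref{lemma:Faris} (the paper writes $e^{-tS}=e^{-tp}\,e^{-t(L-(A+pI))}$, you write $S=(L+pI)-(A+pI)$; these are the same observation). The genuine gap is in your verification of (m-S). You claim that $b<\mu_m$ separates $[\lambda_m-b,\lambda_m+b]$ from $[\mu_m-b,+\infty)$. It does not: separation requires $\lambda_m+b<\mu_m-b$, i.e.\ $2b<\mu_m-\lambda_m$, whereas the definition of $\HbC$ only gives $b<\mu_m$ with $\mu_m=\inf\sigma(L)\setminus\{\lambda_m\}$ (the second spectral point, not the gap). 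For $\lambda_m=0$, $\mu_m=1$, $b=0.9$ the two intervals overlap on $[0.1,0.9]$. Everything downstream of this claim — the persistence of the spectral gap needed for the continuity-of-spectral-projections step, the conclusion that the bottom interval carries a single simple eigenvalue, and the exclusion of $0$ from $\sigma(S)\setminus\{\tilde\lambda_m\}$ — therefore fails under the stated hypotheses.

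The hypothesis $b<\mu_m$ is calibrated for a different estimate, and the paper uses it differently. For a unit vector $v\in W_D=\phi_m^{\perp}$ one has $Q(v)=\langle Sv,v\rangle\ge\mu_m-\sup\sigma(A)>0$, so the quadratic form of $S$ is strictly positive on a closed subspace of codimension one. If $0$ were an eigenvalue of $S$ distinct from $\sigma_m=\min\sigma(S)$ (so $\sigma_m<0$), the span of the two corresponding eigenvectors would be a plane on which $Q\le 0$, and any plane meets $W_D\setminus\{0\}$ — a contradiction. This min--max argument proves exactly the clause of (m-S) concerning $0$ with no spectral separation whatsoever, and it is the step your perturbation-theoretic route cannot replace. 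To salvage your approach you would have to strengthen the hypothesis to $\lambda_m+2b<\mu_m$; to prove the proposition as stated, you need the quadratic-form argument on $\phi_m^{\perp}$.
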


\begin{proof}  By the Kato-Rellich theorem, $S:D \to H$ is  self-adjoint. Since $\min \ \sigma(L)$ is basic,  a standard argument shows that $\sigma_m=\inf \sigma(S)$ is necessarily a simple eigenvalue associated with a normal eigenvector $\psi_m$.
We now prove that if $0$ is an eigenvalue of $S$, and then then $0 = \lambda_m$. Consider the quadratic form $Q(v) = \langle S v, v \rangle$ for  a normal $v \in D$. We must have $Q(\psi_m)=0$. If $\sigma_m < 0$ there is a normal $\psi \in D$ for which $Q(\psi) <0$ and then $Q \le 0$ in the plane $E$ spanned by $\psi_m$ and $\psi$. For a normal $v \in W_D = \phi_m^\perp \subset D$, $\langle L v , v \rangle \ge \mu_m$ and $Q(v) \ge \mu_m - \sup \sigma(A) >0$. But $E$ must intersect the (closed) codimension one subspace $W_D$, a contradiction. This settles (m-S) for $S$.

We now prove (m-PI2).
For $t >0$,  $e^{-tT}$ is positivity improving by (m-PI) for $L$. For some $p$, $ p I + A$ is positive preserving. From Lemma \ref{lemma:44}, $e^{-tS} = e^{-tpI} e^{-t(S - pI)} = e^{-tp} e^{-t(L- (A + pI))} $
 is positivity improving. \qed
\end{proof}

\medskip
\subsection{Counterparts to Theorem \ref{theo:F}} \label{theom}

\medskip
Let $\phi_m >0$ be the normal eigenvector associated with $\lambda_m=\lambda_m(L)$. Set $V = \{ t \phi_m, t \in \RR\}$ and consider the orthogonal decompositions $H = W \oplus V , D = W_D \oplus V$, for $W_D = (W \cap D) $. Let $\Pi$ be the orthogonal projection $\Pi: H \to W$.

\medskip
As in the previous section,
a {\it linearization} of $F=L - P$  is a bounded, symmetric operator $G(u,v) \in \BH$ such that, for $u, v \in D \subset H$, $F(v)-F(u)= (L - G(v,u))(v-u)$. Again, if $P:H \to H$ is a $C^1$ map, linearizations are obtained from Jacobians $J(u)=DP(u): H \to H \in \BH$,
\begin{equation} \label{G2}
F(v)-F(u)=
\big( L - \int_0^1 DP(u + t(v-u)) \ dt \big) (v-u) := (L - G(v,u))(v-u) \ .
\end{equation}

\medskip
Consider the following properties for the Lipschitz map $P: H \to H$.
\begin{enumerate}
\item[(m-H)] There are $b , p \in \RR$ such that, for $u,v \in D$, $\Pi G(u,v) \in \HbC$.
\item[(m-Hs)] There are $b , p \in \RR$ such that, for $u \in D$, $\Pi DP(u) \in \HbC$.
\item[(m-Conv)] If $u<v<w$, then $  \langle w - v , F(v) - F(u) \rangle < \langle v - u , F(w) - F(v) \rangle$ or, equivalently,
$ \langle v-u,P(w) \rangle+\langle w-v, P(u) \rangle + \langle u-w,P(v) \rangle > 0 $.
\item [(m-Convs)] $P:H \to H$ is a $C^1$ map,
     $u,v \in H$, $v>u$. Then $ DP(v) - DP(u) \in \BH$ is  positivity preserving.
\item[(m-Crit)] $P:H \to H$ is $C^1$ and some $u \in H$ is {\it critical}: $0 \in \sigma(DF(y))$.
\end{enumerate}

Again,(m-Hs) and (m-Convs) imply (m-H) and (m-Conv) respectively.

\begin{theo}\label{theo:F2} Suppose  $H$ as above, $L: D \subset H \to H$ an m-special operator, and $P:H\to H$ a Lipschitz (or $C^1$) map admitting linearizations $G(u,v)$.  Define the function $F:D \to H,   u \mapsto Lu - P(u)$.

\smallskip
\noindent(1) If $\ [ - \sup_{u,v} \|G(u,v)\|, \ \sup_{u,v} \|G(u,v)\| \ ]\ \cap \ \sigma(L) = \emptyset$, then $F$ is a homeomorphism.

\smallskip
\noindent (2) Suppose  that (m-H), (m-Conv) hold. Then $F$ is a simple map.

\smallskip
\noindent (3) If (m-H), (m-Conv) hold and $F: H \to H$ is proper, then it is a homeomorphism or folds vertically.

\smallskip
\noindent (4) Suppose that (m-Hs), (m-Convs) and (m-Crit) hold. Then $F$ folds downwards.
\end{theo}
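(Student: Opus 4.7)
The plan is to mirror the four-part proof of Theorem \ref{theo:F}, replacing the Krein--Rutman machinery of Section \ref{BNV} by the self-adjoint perturbation theory of Section \ref{section:perturbation}: the role of the r-special operators $HT$ is taken here by $L - A$ with $A$ a fine perturbation, and Proposition \ref{prop:mamenom} supplies the basic eigenvalue and spectral gap used throughout. After a harmless shift we may assume $\lambda_m = 0$, so that $\mu_m > 0$ and $L|_W : W_D \to W$ is invertible with $\|L|_W^{-1}\| = 1/\mu_m$.

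For part (1), the spectral-gap hypothesis and the self-adjoint functional calculus yield a uniform lower bound $|F(v) - F(u)| \ge c|v - u|$ via the identity $F(v) - F(u) = (L - G(v,u))(v-u)$, giving injectivity and properness of $F$. Surjectivity is then obtained by rewriting $F(u) = z$ as the fixed point equation $u = L^{-1}(P(u) + z)$ and observing that, under the spectral-gap hypothesis, $L^{-1} P$ is a strict contraction.

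For part (2), the aim is to pass to adapted coordinates. With $V = \langle \phi_m \rangle$ and $W$ the orthogonal complement, consider the projected restrictions
\[
F^t : W_D \to W, \qquad F^t(w) = Lw - \Pi P(w + t\phi_m).
\]
Hypothesis (m-H) forces $\Pi P$ to have Lipschitz constant strictly less than $\mu_m$, so the equation $F^t(w) = z$ is equivalent to the strict contraction $w \mapsto L|_W^{-1}(\Pi P(w + t\phi_m) + z)$, bypassing the Schauder step used in Section \ref{BNV}. This delivers the homeomorphism $\Psi : (w,t) \mapsto (F^t(w), t)$ of Proposition \ref{adaptedcoordinatesFr}. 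The convexity hypothesis (m-Conv), in the equivalent symmetric form shown in the statement, plays the role of (r-Conv) in forcing trichotomy among coincidences and ruling out three collinear preimages, exactly as in Proposition \ref{NT}. Part (3) follows immediately from Proposition \ref{prop:STZ}.

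For part (4), the tasks are: produce critical points on every vertical line (and every ray in the direction of a positive $\psi$) by monotonicity of the smallest eigenvalue of $L - DP(u)$, using first order perturbation theory for self-adjoint operators in place of Lemma \ref{smoothlambda}; establish properness of $F$; and identify the fold direction by an index computation parallel to Proposition \ref{index}. The main obstacle, and the key divergence from Section \ref{BNV}, is properness without compactness of the resolvent $(L - \gamma I)^{-1}$. The replacement is purely spectral: a sequence $\{u_k\}$ with $F(u_k)$ bounded and $|u_k| \to \infty$ would, via the contraction estimate of part (2) together with weak compactness in $H$, force the smallest eigenvalue of $L - DP(u)$ to cross zero at more than one point of a single ray, contradicting the strict monotonicity furnished by (m-Convs). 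Once properness is secured, a self-adjoint analog of Lemma \ref{handr}, expressing $h'(t)$ along a fiber as the product of a positive factor and the lowest eigenvalue of $L - DP(u(t))$, combined with the index computation of Proposition \ref{index}, forces the unimodal fiber images to fold downwards.
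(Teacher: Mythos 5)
Parts (1)--(3) of your plan match the paper: item (1) is exactly the Dolph--Hammerstein remark following the adapted-coordinates proposition (Proposition \ref{theo:like2}), whose proof uses the same contraction $K^t(y) = P^t(L_W^{-1}y)$ with constant $b/\mu_m < 1$ that you describe; item (2) uses that contraction to build $\Psi$ plus the trichotomy Proposition \ref{TR} (which leans on Proposition \ref{prop:mamenom}, that $L - G(u,v)$ is again m-special, precisely as you say); item (3) is Proposition \ref{prop:STZ}. So far you are replicating the paper.

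The gap is in part (4), in the properness argument --- and you flag it yourself as the key divergence, but the replacement you sketch does not close it. You propose that a sequence $\{u_k\}$ with $F(u_k)$ bounded and $|u_k| \to \infty$ would, ``via the contraction estimate of part (2) together with weak compactness in $H$,'' force the lowest eigenvalue of $L - DP(u)$ to cross zero at more than one point of a single ray. But weak compactness of $\{\hat u_k := u_k/|u_k|\}$ in $H$ only gives a weakly convergent subsequence $\hat u_k \rightharpoonup \hat u_\infty$; the entire point of dropping compactness of $(L-\gamma I)^{-1}$ is that you can no longer upgrade this to strong convergence, so you cannot rule out $\hat u_\infty = 0$, and even if $\hat u_\infty \ne 0$ you have no mechanism to conclude $\hat u_\infty > 0$, which is what the ray/monotonicity argument needs (compare the use of the Krein--Rutman eigenvector in Proposition \ref{properl}). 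The contraction estimate controls the $W$-component $w(t)$ of a fiber in terms of $|z| + |\Pi P(t\phi_m)|$, but under (m-Hs) alone $\Pi P(t\phi_m)$ may grow linearly with $t$, so both components of $u_k = w_k + t_k \phi_m$ can blow up together with $F(u_k)$ bounded. The paper's actual replacement for the missing compactness is Proposition \ref{prop:properness}: it adds the quantitative asymptotic hypotheses (m-hor), i.e.\ $|\Pi P(t\phi_m)|/|t| \to 0$, and (m-ver), i.e.\ $|h_v(t)| \geq c|t|$ for large $|t|$, which make the fibers asymptotically vertical (Corollary \ref{cor:Lipschitz}) and their image heights linearly divergent, from which properness follows directly; these are then checked for Nemitskii maps via dominated convergence in Lemma \ref{nem}. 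Without supplying such control --- or some other concrete substitute for the strong limit $\hat u_\infty > 0$ --- your part (4) properness step does not go through.
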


There are no restrictions on $b$ besides $b < \mu_m$: Theorem \ref{theo:F} is more demanding.

\smallskip
The proof follows closely the arguments of the previous section.
For $t \in \RR$, set $P^t : W \to W, \ P^t(w) = \Pi P(w + t \phi_m)$ and $F^t: W_D \to W, F^t(w) = \Pi F(w + t \phi_m)$.

\begin{prop} \label{theo:like2}
Suppose that (m-H) holds.
Then, for each $t \in \RR$, the maps $F^t: W_H \to W$ and $\Psi: W \oplus \RR \to W \oplus \RR, \Psi(w,t) = (F^t(w),t)$  are bilipschitz homeomorphisms. Thus
\[ F^a (z,t)= F \circ \Psi^{-1}(z,t) = (z, h^a(z,t))  \ , \]
for a Lipschitz  {\it height function} $h^a: Y = W \oplus V \to \RR$.
The map $F$ admits fibers $\Lambda_z = \{u(t) = w(t) + t \phi_m, t \in \RR\} = F^{-1}(L_z)$ and $F^a(L_z) = \Lambda_z$.

\end{prop}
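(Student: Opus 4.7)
The plan is to exploit two features unavailable in Section~\ref{BNV}: the spectral gap $\mu_m - b > 0$ baked into the definition of $\HbC$, and the fact that $\phi_m$ being an eigenvector of the self-adjoint $L$ makes $W$ an $L$-invariant subspace. These together give uniform Lipschitz bounds both from above and from below for $F^t$, and surjectivity --- the only nontrivial step --- reduces to a one-line Banach fixed point argument, bypassing the Banach--Mazur and Schauder invariance-of-domain machinery of Section~\ref{BNV}.

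Since $L$ is self-adjoint with $\lambda_m$ a simple isolated eigenvalue, the restriction $L|_{W_D}\colon W_D \to W$ is self-adjoint with spectrum in $[\mu_m, \infty)$, hence an isomorphism with $\|(L|_{W_D})^{-1}\| \le 1/\mu_m$. For $w_1, w_2 \in W_D$ and $v_i = w_i + t\phi_m$, formula (\ref{G2}) projected by $\Pi$ gives
\[
F^t(w_1) - F^t(w_2) \;=\; L(w_1 - w_2) - \Pi G(v_1,v_2)(w_1 - w_2),
\]
and (m-H) bounds $\|\Pi G(v_1, v_2)\| \le b < \mu_m$. The triangle inequality then yields the uniform coercive estimate
\[
|F^t(w_1) - F^t(w_2)| \;\ge\; (\mu_m - b)\,|w_1 - w_2|,
\]
while Lipschitzness of $P$ provides the matching upper bound, all uniform in $t$.

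For surjectivity, recast $F^t(w) = z$ as the fixed point equation
\[
w \;=\; (L|_{W_D})^{-1}\bigl(\Pi P(w + t\phi_m) + z\bigr) \;=:\; \Phi_{z,t}(w).
\]
The same linearization computation shows $\Phi_{z,t}\colon W \to W$ is a contraction of ratio at most $b/\mu_m < 1$. Banach's fixed point theorem supplies the unique $w(z,t) \in W_D$ with $F^t(w(z,t)) = z$; joint Lipschitz dependence on $(z,t)$ then follows from the coercive estimate. Thus $F^t$ is a bilipschitz homeomorphism, and so is $\Psi(w,t) = (F^t(w),t)$ on $W \oplus \RR$ equipped with the equivalent norm $|w| + |t|$.

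Writing $\Psi^{-1}(z,t) = (w(z,t), t)$, the relation $\Pi F(w(z,t) + t\phi_m) = F^t(w(z,t)) = z$ forces $F^a(z,t) = (z, h^a(z,t))$ with
\[
h^a(z,t) \;=\; \lambda_m t - \langle \phi_m, P(w(z,t) + t\phi_m)\rangle / |\phi_m|^2,
\]
which is Lipschitz in $(z,t)$ because both $w(z,t)$ and $P$ are. The fiber $\Lambda_z = F^{-1}(L_z)$ is exactly $\Psi^{-1}(\{z\} \times \RR)$, the curve $t \mapsto w(z,t) + t\phi_m$. I do not expect a serious obstacle; the only minor technical point is reading $\|\Pi G\| \le b$ as a genuine operator norm bound, which is built into the definition of $\HbC$. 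The argument is cleaner than its Section~\ref{BNV} counterpart precisely because self-adjointness substitutes direct Hilbert space estimates for Krein--Rutman spectral analysis.
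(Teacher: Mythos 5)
Your proof is correct and takes essentially the same route as the paper's: both recast $F^t(w) = z$ as a fixed point equation, use the bound $\|\Pi G\| \le b < \mu_m = 1/\|L_W^{-1}\|$ from (m-H) to obtain a contraction of ratio $b/\mu_m$, and invoke Banach's fixed point theorem, with the coercive lower Lipschitz estimate and continuity in $t$ following exactly as you describe. The paper merely substitutes $y = L_W w$ and phrases the contraction as $K^t(y) = P^t(L_W^{-1}y)$, which is cosmetically the same as your $\Phi_{z,t}$.
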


\medskip
The argument is  standard (\cite{BP}, \cite{BN}, \cite{CTZ1}, \cite{MST2},  \cite{R}, \cite{R2} \cite{TZ}). The positivity of the eigenfunction associated with $\lambda_m$ is  not required, nor the existence of positive maps $A+ p I$ in the definition of $\HbC$.

\medskip
\begin{proof} Since $L$ is m-special and $W_D = V^\perp$, the restriction $L_W: W_D \to W$ is invertible: $\sigma(L_W)= \sigma(L) \setminus\{\lambda_m\}$. To show the invertibility of $F^t:W \to W$ for each $t \in \RR$, we solve $F^t(w)= \Pi L(w + t \phi_m) - \Pi P(w + t \phi_m) = z$, for $z \in W$, or
\[  y = P^t(L_W^{-1}y) + z \quad \hbox{for} \quad y = L_W w \in W \ . \]
From (m-H), the map $K^t(y) = P^t(L_W^{-1}y)$ is a contraction, with constant $c = b/\mu_m$ independent of $t$. Indeed,
\[ | K^t(w_1) - K^t(w_2)| = |\Pi \big( P(L_W^{-1}(w_1 )+ t\phi_m)  - P(L_W^{-1}(w_2 )+ t\phi_m)\big)|\]
\[ = |\Pi G(L_W^{-1} w_1 + t \phi_m,L_W^{-1} w_2 + t \phi_m)L_W^{-1}( w_1 - w_2)|\le \frac{b}{\mu_m}|w_1-w_2| , \]
as $\| L_W^{-1}\|=1/ \mu_m$ and, by (m-H), $\|\Pi G \| \le b < \mu_m$: $K^t$ is a contraction. Set $c = b/ \mu_m$.

Hence the maps $I - K^t: W \to W$ are (uniform) Lipschitz bijections. The inverses $(I - K^t)^{-1}: W \to W$ are also uniformly  Lipschitz with constant $1/(1-c)$: for $z_i=(I - K^t)^{-1}(y_i),\ i=1,2$,
$$|z_1-z_2|\le |y_1-y_2|+|K^t(z_1)-K^t(z_2)|\le |y_1-y_2| +c|z_1-z_2| \ . $$
Thus  $F^t=(I - K^t) \circ L_W$ are also uniformly bilipschitz homeomorphisms.
The map $\Psi=\left(F^t, \ \hbox{Id} \right): D\to H $ is a homeomorphism because of the continuous dependence of the fixed point with respect to $t$.
\qed
\end{proof}

\medskip
\noindent{\bf Remarks}

\smallskip
\noindent 1. To use Proposition \ref{theo:like2} with the hypotheses of Theorem \ref{AP},  translate $F = {\bf F} - \gamma I $, $P = {\bf P} - \gamma I $ with $\gamma = (a+b)/2$.

\smallskip
\noindent 2. In Theorem \ref{AP}, $P(u) = f(u)$ and the values of $f'$ are close to $\lambda_m$: $P(u)$ is roughly a multiplication by $\lambda_m$ (plus a constant). For a general $P$, once the projected maps $\Pi P^t$ are appropriately bounded,
adapted coordinates apply, the behavior of the map $F$ along the vertical axis may be different.

\smallskip
\noindent 3. Under the hypotheses of Theorem \ref{theo:F2} (1),  the estimates in the proof  apply to $F: D \to H$ itself: $F$ is a bilipschitz homeomorphism. This is the usual Dolph-Hammerstein theorem (\cite{D}, \cite{H}).

\medskip

From Proposition \ref{theo:like2}, vertical lines $L_z = \{z + h \phi_m , h \in \RR\} \subset H$, when inverted by $F$, give rise to fibers
$ \Lambda_z = \{u(t) = w(t) + t \phi_m, w(t) \in W_D, t \in \RR\}$.

\medskip
Without loss, after a translation in the range $H$, we assume $P(0)= F(0)=0$. Indeed, hypotheses (m-H) and (m-Conv) are invariant under this operation.

\begin{cor} \label{cor:Lipschitz} Let $\Lambda_z$ be a fiber. For some $C \in \RR$, $ | w(t) | \le C (| z | + | \Pi P (t \phi_m) |)$.
\end{cor}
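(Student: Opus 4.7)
\smallskip
\noindent\textbf{Proof proposal.}
The plan is to argue directly from the fiber equation, using the same contraction--type estimate that powered the proof of Proposition~\ref{theo:like2}, but applied to a carefully chosen reference point. Along the fiber $\Lambda_z = \{u(t)=w(t)+t\phi_m\}$ the defining property is $\Pi F(u(t))=z$, which, since $\Pi L(w(t)+t\phi_m)=L_W w(t)$, reads
\[
L_W w(t) \;=\; \Pi P\bigl(w(t)+t\phi_m\bigr) \;+\; z.
\]

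The trick is to measure $P$ at $u(t)$ relative to $t\phi_m$ rather than relative to $0$: write
\[
\Pi P\bigl(w(t)+t\phi_m\bigr) \;=\; \Pi P(t\phi_m) \;+\; \Pi P\bigl(w(t)+t\phi_m\bigr)-\Pi P(t\phi_m),
\]
and use a linearization $G=G(w(t)+t\phi_m,\,t\phi_m)$, as in \eqref{G2}, so that the last difference equals $\Pi G\,w(t)$. Substituting gives the linear identity
\[
(L_W-\Pi G)\,w(t) \;=\; z+\Pi P(t\phi_m).
\]

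The remaining step is to show that $L_W-\Pi G:W_D\to W$ is boundedly invertible with norm independent of $t$. Factor $L_W-\Pi G=L_W\bigl(I-L_W^{-1}\Pi G\bigr)$. Since $L$ is m-special, $\sigma(L_W)=\sigma(L)\setminus\{\lambda_m\}\subset[\mu_m,\infty)$, hence $\|L_W^{-1}\|\le 1/\mu_m$; and by hypothesis (m-H), $\Pi G\in\HbC$ satisfies $\|\Pi G\|\le b<\mu_m$. Therefore $\|L_W^{-1}\Pi G\|\le b/\mu_m<1$, the Neumann series for $(I-L_W^{-1}\Pi G)^{-1}$ converges uniformly in $t$, and
\[
\bigl\|(L_W-\Pi G)^{-1}\bigr\| \;\le\; \frac{1}{\mu_m-b}.
\]
Applying this to the displayed linear identity yields
\[
|w(t)|\;\le\;\frac{1}{\mu_m-b}\bigl(|z|+|\Pi P(t\phi_m)|\bigr),
\]
which is the asserted bound with $C=1/(\mu_m-b)$.

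There is no real obstacle: the only subtle point is the choice of reference point $t\phi_m$ (not $0$) inside the linearization, so that the ``bad'' vertical contribution $t\phi_m$ is absorbed into the explicit term $\Pi P(t\phi_m)$ on the right-hand side, and the horizontal part $w(t)$ is controlled by a uniformly invertible linear operator on $W$. Everything else is the same contraction estimate already used to build adapted coordinates.
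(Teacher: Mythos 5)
Your proof is correct and, at bottom, the same estimate as the paper's: the paper simply applies the uniform bilipschitz bound for $F^t$ (already established in Proposition~\ref{theo:like2}) to the two points $w(t)$ and $0$ in $W_D$, observing that $F^t(w(t))=z$ and $F^t(0)=-\Pi P(t\phi_m)$, which gives $|w(t)|\le C\,|z+\Pi P(t\phi_m)|$ in one line. Your version re-derives the same inverse bound from scratch by rewriting the fiber equation as $(L_W-\Pi G)\,w(t)=z+\Pi P(t\phi_m)$ and running the Neumann-series estimate $\|(L_W-\Pi G)^{-1}\|\le 1/(\mu_m-b)$; this is the same contraction mechanism that powers Proposition~\ref{theo:like2}, so the two proofs differ only in whether the uniform invertibility is cited or recomputed.
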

\begin{proof} Since $F^t: W_D \to W$ are bilipschitz maps  and $F(u(t)) = z + h(t) \phi_m$,
\[ | w(t) - 0| \le C | F^t(w(t)) - F^t(0) | \]  \[= C | \Pi F(w(t) + t \phi_m) - \Pi F( t \phi_m) |
 =C | z - \Pi P (t \phi_m) | \]
and the result follows. \qed
 \end{proof}

\medskip
Adapted coordinates provide examples. For $-\Delta: H^2(\Omega) \cap H_0^1(\Omega) \to H^0(\Omega)$, with smallest eigenvalue $\lambda_m$ and ground state $\phi_m$, consider, for $t = \langle \phi_m, u \rangle$,
\[ F(u) = L u - P(u) = (- \Delta - \lambda_m) u + (I - \Pi) (\sin t) u \ , \]
The change of variables $ z = (- \Delta - \lambda_m) w$ yields $F^a (z, t) = (z, t\sin t )$, an example of a function $F$ with points with infinitely many preimages.

\begin{prop} \label{TR} If $F(u) = F(v)$ and $P$ satisfies (m-Conv), then either $u > v$ or $u = v$ or $u <v$. If (m-Conv) also holds, $F(u) = g$ has at most two solutions.
\end{prop}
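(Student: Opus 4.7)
The structure mirrors Proposition \ref{NT}: trichotomy should come from the existence of a basic, strictly positive ground state of the linearized operator along the segment from $u$ to $v$, while the at-most-two-preimages statement should come directly from the strict inequality in (m-Conv).

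For the first assertion I would use the integral linearization (\ref{G2}) to write
\[ 0 = F(v) - F(u) = (L - G(v,u))(v-u), \]
so, assuming $u \ne v$, the element $v - u$ is a nontrivial kernel vector of $S := L - G(v,u)$. Hypothesis (m-H) places $G(v,u)$ (which is symmetric by construction as an average of Jacobians) among the fine perturbations of $L$, and Proposition \ref{prop:mamenom} then gives that $S: D \to H$ is m-special. Property (m-S) applied to $S$ forces any eigenvalue equal to $0$ to coincide with $\lambda_m(S)$, and that eigenvalue is basic: its eigenspace is one-dimensional and generated by a strictly positive eigenvector $\psi > 0$. Consequently $v-u \in \mathbb{R}\psi$, and the three alternatives $u>v$, $u=v$, $u<v$ exhaust the possibilities.

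For the second assertion I would argue by contradiction, supposing $F(u_1) = F(u_2) = F(u_3) = g$ with $u_1, u_2, u_3$ distinct. The trichotomy just established, applied to each of the three pairs, totally orders the points; relabel so that $u_1 < u_2 < u_3$. The strict convexity hypothesis (m-Conv) then yields
\[ \langle u_3 - u_2, F(u_2) - F(u_1) \rangle < \langle u_2 - u_1, F(u_3) - F(u_2) \rangle, \]
but both inner products vanish since $F$ is constant on $\{u_1, u_2, u_3\}$, a contradiction. The main technical obstacle is to be confident that $G(v,u)$ (and not just its projection $\Pi G(v,u)$) genuinely satisfies the hypotheses of Proposition \ref{prop:mamenom} so that $L - G(v,u)$ inherits the full m-special structure --- once this is in place, the only ingredient needed is the rigidity of the basic ground state, and the rest is formal.
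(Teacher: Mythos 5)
Your proof is correct and matches the paper's own argument essentially line for line: trichotomy comes from placing $v-u$ in the kernel of $L-G(v,u)$, invoking Proposition~\ref{prop:mamenom} to conclude $L-G(v,u)$ is m-special, and then using (m-S) to identify the kernel with the one-dimensional, positively-generated ground-state line; the at-most-two statement follows by ordering three putative preimages and noting that (m-Conv) gives a strict inequality between two quantities that both vanish. The technical worry you flag at the end --- that (m-H) only places $\Pi G(u,v)$, not $G(u,v)$ itself, in $\HbC$, whereas Proposition~\ref{prop:mamenom} is invoked for $G(u,v)$ --- is a genuine subtlety, and the paper's own proof silently assumes $G(u,v)$ is a fine perturbation without reconciling this with the stated form of (m-H); you are right to call it out rather than sweep it under the rug.
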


\begin{proof} An integral of fine perturbations as in Equation \ref{G2} for $G(u,v)$, is also fine. By Proposition \ref{prop:mamenom}, $L-G(u,v)$ is m-special. If $F(u) = F(v)$, then $\pm(u - v) \in \Ker(L- G(u,v))$ is a positive  eigenfunction or 0, by (m-S): trichotomy holds.

Suppose $F(u) = F(v)= F(w)= g$. By the previous paragraph, we may assume $u < v < w$, and then the first form of (m-Conv) is clearly violated.  \qed
\end{proof}

\medskip
\noindent{\bf Proof of Theorem \ref{theo:F2}:} Item (1) is a remark after Proposition \ref{theo:like2}. Proposition \ref{theo:like2} showed that no additional restrictions on $b>0$ are necessary. Proposition \ref{TR}, in turn, employed (m-Conv) exactly as  Proposition \ref{NT} employed (r-Conv). The proof now follows the arguments in the previous section.
\qed

\subsection{Asymptotics of heights: properness} \label{asyprop}

Informally, if (m-H) holds, the properness of $F$ depends on its behavior along the vertical axis $V = \langle \phi_m \rangle$. Assume $P:H \to H$ Lipschitz and recall $F(0)=0$. Set
\[ F( t \phi_m) = z_{v}(t) + h_{v}(t) \phi_m = \Pi P(t \phi_m)+ \langle \phi_m, F( t \phi_m) \rangle \phi_m \ , \ \hbox{for} \ \ z_{v} \in W \ . \]

\begin{prop} \label{prop:properness} Suppose the function $F: D \to H$  satisfies (m-H) and
\begin{enumerate}
\item[(m-hor)] $  \lim_{ |t| \to \infty} | z_v(t) |_H \  / \ t = 0$.
\item[(m-ver)]For some $c >0$ and large $|t|$,  $|h_v(t)| \ge c |t|$.
\end{enumerate}
Then $F$ is proper.
\end{prop}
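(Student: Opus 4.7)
The plan is to work entirely in adapted coordinates. By Proposition \ref{theo:like2}, the map $\Psi: D \to H, \ (w,t)\mapsto (F^t(w),t)$ is a bilipschitz homeomorphism and $F = F^a \circ \Psi$, with $F^a(z,t) = (z, h^a(z,t))$. Since $\Psi$ and $\Psi^{-1}$ are bilipschitz, properness of $F:D\to H$ is equivalent to properness of $F^a: H\to H$. So given a sequence $(z_k,t_k)$ with $F^a(z_k,t_k) = (z_k, h^a(z_k,t_k))$ convergent in $H$, we must show that $(z_k,t_k)$ has a convergent subsequence. Since $z_k \to z_\infty$ is given, the entire task reduces to showing that $\{t_k\}\subset\RR$ is bounded; once this is done, a subsequence $(z_k,t_k)\to (z_\infty, t_\infty)$ converges in $W\oplus V=H$, and then $u_k = \Psi^{-1}(z_k,t_k)$ converges in $D$ by the bilipschitz property.

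To bound $\{t_k\}$, I would combine Corollary \ref{cor:Lipschitz} with the decomposition of $h^a$ along the vertical axis. On the fiber $\Lambda_{z_k}$, write $u_k = w(z_k,t_k) + t_k\phi_m$; Corollary \ref{cor:Lipschitz} gives $|w(z_k,t_k)| \le C(|z_k| + |z_v(t_k)|)$. A direct computation, using $L\phi_m = \lambda_m\phi_m$ and $|\phi_m|=1$, yields
\[
h^a(z_k,t_k) = \lambda_m t_k - \langle \phi_m, P(u_k)\rangle, \qquad h_v(t_k) = \lambda_m t_k - \langle \phi_m, P(t_k\phi_m)\rangle,
\]
so that, by the Lipschitz property of $P$ with some constant $C_P$,
\[
|h^a(z_k,t_k) - h_v(t_k)| \le C_P\,|w(z_k,t_k)| \le C_P C\,(|z_k| + |z_v(t_k)|).
\]

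Now suppose for contradiction that $|t_k|\to\infty$ along some subsequence. Since $\{z_k\}$ and $\{h^a(z_k,t_k)\}$ are bounded (they converge), the displayed estimate gives
\[
|h_v(t_k)| \ \le \ |h^a(z_k,t_k)| + C_P C\,(|z_k| + |z_v(t_k)|) \ = \ O(1) + C_P C\,|z_v(t_k)|.
\]
Dividing by $|t_k|$ and using (m-hor), the right-hand side tends to $0$, while (m-ver) forces $|h_v(t_k)|/|t_k|\ge c>0$ for $k$ large, a contradiction. Hence $\{t_k\}$ is bounded and $F$ is proper.

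The only delicate step is the error estimate relating $h^a(z_k,t_k)$ to the purely vertical quantity $h_v(t_k)$; once Corollary \ref{cor:Lipschitz} controls how far $u_k$ strays horizontally from the vertical axis and the Lipschitz hypothesis on $P$ transfers this to a control of the vertical discrepancy, (m-hor) and (m-ver) combine cleanly. No compactness of the resolvent is used; the bilipschitz nature of $\Psi$ replaces it.
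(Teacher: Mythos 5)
Your proposal is correct and follows essentially the same route as the paper: reduce to the properness of $F^a=F\circ\Psi^{-1}$ via the bilipschitz change of variables, compare the fiber height with the vertical height $h_v$ using the Lipschitz property of $P$ together with Corollary \ref{cor:Lipschitz}, and then let (m-hor) kill the discrepancy while (m-ver) forces $\{t_k\}$ to be bounded. The contradiction phrasing versus the paper's $h_z(t)/t = h_v(t)/t + o(1)$ asymptotic is only a cosmetic difference.
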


From Corollary \ref{cor:Lipschitz}, $ | w(t) | \le C ( | z | + | z_v(t) |)$. Hypothesis (m-hor) implies that fibers $w(t) + t \phi_m$ are asymptotically vertical: indeed,  $w(t)/t \to 0$ for $|t| \to \infty$. Hypothesis (m-ver) imposes a rate of growth for the height $h_v(t)$.

\medskip
\begin{proof} For $z \in W$, consider the fiber $\Lambda_z = \{u(t) = w(t) + t \phi_m\} $, and its image $F(u(t)) = z + h_z(t) \phi_m$. As $L W_D \subset W$,
\[ h_z(t) =  \langle \phi_m , L (w(t) + t \phi_m) - P(w(t) + t \phi_m) \rangle
 = \lambda_m t - \langle \phi_m ,  P(w(t) + t \phi_m) \rangle \]
  \[= \lambda_m t - \langle \phi_m ,  (P(w(t) + t \phi_m) - P(t \phi_m)) \rangle  - \langle \phi_m ,    P(t \phi_m) \rangle \]

Since $P:H \to H$ is Lipschitz, from Corollary \ref{cor:Lipschitz},
\[ |  P(w( t) + t \phi_m) - P(t \phi_m) |/t \le C_1 | w(t) /t | \le
C_2 | \Pi P(t \phi_m) /t | + o(1) \ , \]
for  constants $C_i$ locally uniformly in $z$. From (m-hor), for large $|t|$,
\[ h_z(t)/t = \lambda_m - \langle \phi_m , P(t \phi_m) \rangle  / t + o(1) = h_v(t)/t + o(1) \  . \]
Thus $h_v$ and $h_z$ have the same rate of growth, given by (m-ver).

We prove the equivalent properness  of $F^a = F \circ \Psi^{-1}$. The estimates for $h_z$ translate to $h_z^a = h_z \circ \Psi^{-1}$, as $\Psi: W_D \oplus \RR \to W \oplus \RR $ in Proposition \ref{theo:like2} leaves the vertical component unaltered. For a convergent sequence in the image of $F^a$, $(z_n, s_n) =(z_n, h^a_z(z_n, t_n) ) \to (z_\infty, s_\infty)$. The estimates for $h^a$, being  locally uniform in $z$, imply the boundedness of $\{ t_n\}$: $F^a$ is proper.  \qed
\end{proof}

\medskip
Under the hypotheses of Theorem \ref{theo:F2}, if the asymptotic signs of $h_v(t)$ are different then $F$ is a homeomorphism. If they are equal, $F$ is a  global fold.

\subsubsection{Nemitskii maps yield properness and folds} \label{Nemcomp}

This natural extension of Theorem \ref{AP} is a counterpart of Theorem \ref{theo:BNV2}.

\begin{theo}\label{theo:Nemitskii2} Suppose  $H$ as above, $L: D \subset H \to H$ an m-special operator such that $\phi_m \in L^1$. Let  $P:H\to H$ be a Nemitskii map $P(u) = f(u)$ for  a strictly convex function  $f:\RR \to \RR$  whose Newton quotient satisfies
\[ \  - \infty < a = \inf_{r \ne s} q(r,s) <  \lambda_m < b  = \sup_{r \ne s} q(r,s)< \mu_m \ . \]
Then the function $F:D \to H,   u \mapsto Lu - P(u)$ folds downwards
\end{theo}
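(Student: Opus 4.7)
The plan is to verify the hypotheses of Theorem~\ref{theo:F2}(2) and Proposition~\ref{prop:properness}, then combine the simple structure with the asymptotic behavior of $F$ along the vertical axis $V=\langle\phi_m\rangle$ to conclude that $F$ folds downwards. First, translate by $\gamma=(a+b)/2$, replacing $L$ by $L-\gamma I$ and $f$ by $f-\gamma\cdot\mathrm{id}$: the shifted $L$ remains m-special (since $\gamma\notin\sigma(L)\setminus\{\lambda_m\}$ by the gap hypothesis, so the (m-S) requirement on zero as an eigenvalue survives, while (m-PI) is invariant under shifts), and the Newton quotient of the new $f$ takes values in $[-(b-a)/2,(b-a)/2]$. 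Because $b<\mu_m$, this interval sits strictly inside the new gap, so the multiplication linearizations $G(u,v)=M_{q(u,v)}$ are self-adjoint with $\|G(u,v)\|\le(b-a)/2$ and $G(u,v)+pI\ge 0$ for $p=(b-a)/2$, which gives (m-H). Strict convexity of $f$ yields the pointwise inequality $(v-u)f(w)+(w-v)f(u)+(u-w)f(v)\ge 0$, strict wherever $u(x)<v(x)<w(x)$, which is all that is used subsequently (e.g.\ in Proposition~\ref{TR}), since the triples that arise from trichotomy are strictly ordered a.e.\ by (m-PI). Thus Theorem~\ref{theo:F2}(2) applies and $F$ is a simple map.

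Next, I would verify the vertical-axis asymptotics of Proposition~\ref{prop:properness}. Without loss $f(0)=0$, so $P$ maps $H$ to $H$ on possibly infinite-measure spaces. By strict convexity together with $\sup q=b$ and $\inf q=a$, one has $f(s)/s\to b$ as $s\to+\infty$ and $f(s)/s\to a$ as $s\to-\infty$; normalizing $|\phi_m|_{L^2}=1$, dominated convergence with dominant $b|\phi_m|\in L^2$ gives $f(t\phi_m)/t\to b\phi_m$ (resp.\ $a\phi_m$) in $H$ as $t\to\pm\infty$. Since $\Pi\phi_m=0$, this yields $z_v(t)/t=\Pi f(t\phi_m)/t\to 0$, establishing (m-hor). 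For (m-ver), pick supporting lines $\ell_\pm(s)=\alpha_\pm s+\beta_\pm$ of $f$ with $\lambda_m<\alpha_+\le b$ and $a\le\alpha_-<\lambda_m$; pairing with $\phi_m$ and using $\phi_m\in L^1$ to make $\langle\phi_m,\beta_\pm\cdot 1\rangle=\beta_\pm|\phi_m|_{L^1}$ finite,
\[ h_v(t) = \lambda_m t - \langle\phi_m,f(t\phi_m)\rangle \le (\lambda_m - \alpha_\pm) t - \beta_\pm |\phi_m|_{L^1} \]
as $t\to\pm\infty$, and the sign of $\lambda_m-\alpha_\pm$ drives $h_v(t)\to-\infty$ linearly at both ends.

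Finally, Proposition~\ref{prop:properness} gives properness of $F$, so by Theorem~\ref{theo:F2}(3), $F$ is either a homeomorphism or folds vertically. The divergence $h_v(t)\to-\infty$ at both ends precludes injectivity on $V$ and pins the fold direction downwards, completing the proof. The main obstacle in the argument is the asymptotic analysis on possibly infinite-measure $M$: both the $L^2$ dominated convergence for $f(t\phi_m)/t$ and the pairing that produces the linear bound on $h_v(t)$ require the constant pieces (the value $f(0)$, handled by translation, and the intercepts $\beta_\pm$) to be integrable against $\phi_m$, which is precisely what the hypothesis $\phi_m\in L^1$ provides.
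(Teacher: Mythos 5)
Your proof is correct and follows essentially the same route as the paper's: translate by $(a+b)/2$, verify (m-H) and (m-Conv) from the multiplication structure of the linearizations and strict convexity, invoke Theorem~\ref{theo:F2}(2) for the simple-map structure, verify (m-hor) and (m-ver), obtain properness from Proposition~\ref{prop:properness}, and combine with Theorem~\ref{theo:F2}(3) to identify a downward fold.

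The one point where you genuinely deviate is the verification of (m-ver). The paper's Lemma~\ref{nem}(iii) computes the exact limits $\lim_{t\to\pm\infty} h_v(t)/t = \lambda_m - b$ (resp.\ $\lambda_m - a$) by dominated convergence applied to $\langle\phi_m, f(t\phi_m)/t\rangle$, using the dominant $\max(|a|,|b|)\,\phi_m^2\in L^1$. You instead pair supporting lines $\ell_\pm$ of $f$ against $\phi_m$, which gives a one-sided linear upper bound on $h_v(t)$. Both approaches yield (m-ver) and the downward sign; yours is slightly more elementary but gives only bounds rather than limits, while the paper's gives the precise rates. Both arguments rely on $\phi_m \in L^1$ to control the constant piece, and your explicit remark tracking where this hypothesis enters (the intercepts $\beta_\pm$) is a useful clarification that the paper leaves implicit. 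One small imprecision: the dominant for your $L^2$-convergence step should be $\max(|a|,|b|)\,|\phi_m|$ rather than $b\,|\phi_m|$ (or the translated $\tfrac{b-a}{2}\,|\phi_m|$), since $a$ may be negative. And "precludes injectivity on $V$" is loose phrasing: $V$ itself is not a fiber; the relevant fact, supplied by the estimates in the proof of Proposition~\ref{prop:properness}, is that all fiber height functions $h_z$ share the asymptotics of $h_v$, which forces the downward fold once a homeomorphism is ruled out. Neither issue affects the validity of the argument.
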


The functions in $H$ do not have to be defined on bounded domains.

We obtain the properness of $F$ from Proposition \ref{prop:properness}.

\begin{lemma} \label{nem}
\noindent (i) $P(u) = f(u) - (b+a)u/2 $ satisfies (m-H) and (m-Conv).

\noindent (ii) (m-hor) holds.

\noindent (iii) (m-ver) also holds: $ \lim_{t \to -\infty} h_v(t)/t = \lambda_m- a >0 , \  \lim_{t \to \infty} h_v(t)/t = \lambda_m- b < 0$.
\end{lemma}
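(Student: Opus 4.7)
The plan is to verify each item by a pointwise, Nemitskii-specific computation. The asymptotic slopes $a=\inf f'$ and $b=\sup f'$ (which exist by strict convexity) give the leading-order behavior, and dominated convergence closes everything, with the hypothesis $\phi_m\in L^1$ controlling the constant-in-$x$ contribution from $f(0)$ when $M$ has infinite measure.

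For (i), the linearization of $P$ is the Nemitskii operator $G(u,v)=M_{q_P(u,v)}$, i.e., multiplication by the pointwise Newton quotient $q_P(u,v)(x)=q_f(u(x),v(x))-(a+b)/2\in(-(b-a)/2,(b-a)/2)$. Thus $G(u,v)$ is bounded, symmetric, with $\|G(u,v)\|\le(b-a)/2$, and $G(u,v)+\frac{b-a}{2}I$ is multiplication by a nonnegative function, hence positivity preserving. I would then apply Theorem~\ref{theo:F2} to the decomposition $F=\tilde L-P$ with $\tilde L=L-\frac{a+b}{2}I$: since the ground state $\phi_m$ is unchanged and the resolvent identity makes (m-PI) invariant under real shifts, $\tilde L$ is still m-special with second eigenvalue $\tilde\mu_m=\mu_m-\frac{a+b}{2}$. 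The bound required for (m-H) is $(b-a)/2<\tilde\mu_m$, which rearranges to exactly the hypothesis $b<\mu_m$. For (m-Conv), $u<v<w$ means $u(x)<v(x)<w(x)$ a.e., so pointwise strict Jensen gives
\[(w(x)-u(x))f(v(x))<(w(x)-v(x))f(u(x))+(v(x)-u(x))f(w(x))\quad\text{a.e.};\]
integrating yields $\langle v-u,f(w)\rangle+\langle w-v,f(u)\rangle+\langle u-w,f(v)\rangle>0$, and the linear part of $P$ contributes zero to this alternating sum.

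For (ii) and (iii), the core computation is the asymptotic of $f(t\phi_m)/t$. Writing $f(s)=f(0)+q_f(s,0)\,s$ gives $f(t\phi_m(x))/t=q_f(t\phi_m(x),0)\phi_m(x)+f(0)/t$ wherever $\phi_m(x)>0$; since $q_f(s,0)\to b$ as $s\to+\infty$ and $\to a$ as $s\to-\infty$, pointwise $f(t\phi_m)/t\to b\phi_m$ and $\to a\phi_m$ at the two ends. The $L^2$ dominant $|q_f(t\phi_m,0)\phi_m|\le\max(|a|,|b|)\phi_m\in L^2$ takes care of the principal term, while $\phi_m\in L^1$ makes $\langle\phi_m,f(0)/t\rangle=f(0)\|\phi_m\|_1/t\to 0$. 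Dominated convergence then yields $\langle\phi_m,f(t\phi_m)\rangle/t\to b$ (resp.\ $a$) and $\Pi(f(t\phi_m)/t)\to b\,\Pi\phi_m=0$ (resp.\ $0$) in $L^2$. Since $z_v(t)=\Pi f(t\phi_m)$ (the linear part of $P$ is killed by $\Pi\phi_m=0$), this is (m-hor); for (m-ver), $h_v(t)/t=\lambda_m-\langle\phi_m,f(t\phi_m)\rangle/t$ gives $\lambda_m-b<0$ and $\lambda_m-a>0$ as $t\to\pm\infty$, by the hypothesis $a<\lambda_m<b$.

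The main obstacle is the passage from pointwise to $L^2$ convergence of $f(t\phi_m)/t$ when $M$ has infinite measure: the convergence $f(s)/s\to b$ is not uniform in $s$, and the constant $f(0)/t$ is not square integrable for $|M|=\infty$. The hypothesis $\phi_m\in L^1$ plays exactly the role of closing this estimate at the level of the scalar pairings $\langle\phi_m,\cdot\rangle$, where the offending constant only appears multiplied by $\phi_m$ and hence has an integrable tail.
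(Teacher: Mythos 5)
Your proof follows essentially the same route as the paper's: the linearization is the multiplication operator by the shifted Newton quotient, convexity gives the pointwise Jensen inequality for (m-Conv), and dominated convergence supplies the asymptotic slopes of $h_v$. The differences are in presentation rather than substance. For (i), the paper simply declares the verification ``familiar''; your computation of the bound $\|G\|\le (b-a)/2<\mu_m-(a+b)/2$, together with the observation that $G+\tfrac{b-a}{2}I$ is multiplication by a nonnegative function, makes this explicit and is the correct calculation. For (ii), the paper estimates $\|\Pi P(t\phi_m)/t\|_H$ by pairing against compactly supported $v$ of norm one and then interchanging a $t$-limit with a supremum over $v$; as written that interchange is not fully justified, whereas your decomposition $f(t\phi_m)/t=q_f(t\phi_m,0)\,\phi_m+f(0)/t$ followed by dominated convergence in $L^2$ and continuity of $\Pi$ is a cleaner and tighter argument. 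One point to sharpen: when $M$ has infinite measure the constant $f(0)/t$ is not in $L^2$ at all (indeed $P$ does not map $L^2$ to $L^2$), so (m-hor) cannot be salvaged by $\phi_m\in L^1$, which only controls the scalar pairing in (iii); the paper resolves this in the remark right after the lemma by restricting to $f(0)=0$ on unbounded domains, and your final paragraph should invoke that rather than $\phi_m\in L^1$ for the $L^2$ estimate in (ii). With that clarification, your proof is correct.
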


\begin{proof} (i) The verification of properties (m-H) and (m-Conv) is by now familiar.
\noindent (ii) For $v \in W_D$ of norm one and compact support (and then $\Pi v = v$),
\[ \| \Pi P(t \phi_m)/t \|_H = \sup_{|v|=1} \ \left|\langle v , \Pi P(t \phi_m)/t \rangle\right| = \sup_{|v|=1} \ \left|\langle v ,  f(t \phi_m)/t \rangle\right| \ .\]
We use the dominated convergence theorem. We consider $t \to \infty$, the other case is similar. By the convexity of $f$, for $x>0$, $f(tx)/t \to bx $  if $t \to \infty$, and
the integrand $|v f(t \phi_m)/t |$ is pointwise bounded by an $L^1$ function.  Taking limits,
\[ \| \lim_{t \to \infty} \Pi P(t \phi_m)/t \|_H  = \sup_{|v|=1} \ \big| \int_\Omega \lim_{t \to \infty}  v f(t \phi_m)/t \big| = \sup_{|v|=1} \  \big|b \int_\Omega  v \  \phi_m \big| \ =0 ,\]
since  $v \in W_D = V^\perp$. Statement (iii) also follows from dominated convergence.
\qed \end{proof}
\medskip

\noindent{\bf Proof of Theorem \ref{theo:Nemitskii2}:} Combine Theorem \ref{theo:F2} (3) with Lemma \ref{nem} (iii).
\qed

\medskip
A Lipschitz function $f$ induces a Lipschitz Nemitskii map $P: L^2(\Omega) \to L^2(\Omega)$ on  unbounded sets $\Omega \subset \RR^n$ if $f(0) = 0$. As an example of application of Theorem \ref{theo:Nemitskii2}, consider the m-special (see Proposition \ref{examplespecial}) Hamiltonian for the hydrogen atom in $\RR^3$, $L u = - \Delta u - u/r$, with basic eigenvalue $\lambda_m(L)$ associated with a normal, positive ground state $\phi_m \in L^1 \cap L^2$.

\subsection{Some examples}  \label{sec:examples}

Folds related to self-adjoint elliptic operators different from the Dirichlet Laplacian  are known (\cite{B}): we present some more.  The larger class of $m$-special operators $L: D \to H$ includes the  finite dimensional case  $D = H = \RR^n$ when $\nu$ is a finite collection of deltas and the operators described in Proposition \ref{prop:m-special}.

\subsubsection{ m-special operators $L$} \label{examplespecial}

The identification of operators $L$ generating positivity preserving semigroups (hypothesis (m-PI2)) is by itself a field of mathematics.  Arguments in the spirit of Bochner's theorem on distributions of positive type and the Levy-Khintchine formula (Appendix 2, \cite{RS}, vol. IV) lead to a wealth of examples.

\begin{prop} \label{prop:m-special} The following operators are  $m$-special:
I. $-\Delta$ for Dirichlet, Neumann, periodic or mixed boundary conditions on bounded smooth domains; II.
The  hydrogen atom in $\RR^3$, $L u = - \Delta u - u/r$;  III. The quantum oscillator $Lu (x) = - u''(x) + x^2 u(x)$; IV. Fractional powers $L^s, s \in (0,1)$ of positive $m$-special operators.
\end{prop}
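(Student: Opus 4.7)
The plan is to verify hypotheses (m-S) and (m-PI) for each class separately, systematically invoking Lemma \ref{lemma:44} to reduce (m-PI) to producing a positivity preserving semigroup $e^{-tL}$ that is in fact positivity improving for every $t>0$, together with $\lambda_m$ being an eigenvalue. For I, classical elliptic theory suffices: on a smooth bounded domain $\Omega$ the Laplacian with any of the listed boundary conditions has compact resolvent, so $\sigma(-\Delta)$ is discrete and bounded below and $\lambda_m$ is an isolated simple eigenvalue. Its eigenfunction is strictly positive (constants for Neumann or periodic; Hopf maximum principle for Dirichlet and mixed). The corresponding heat kernel is pointwise strictly positive by the parabolic strong maximum principle, so $e^{t\Delta}$ is positivity improving and Lemma \ref{lemma:44} closes the case.

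For II and III the same pattern applies on unbounded domains, with essentially explicit ground states. The harmonic oscillator $L u=-u''+x^2 u$ has compact resolvent with discrete spectrum $\{2n+1:n\in\NN\}$ and Hermite eigenfunctions $H_n(x) e^{-x^2/2}$; the ground state $e^{-x^2/2}>0$ and Mehler's formula gives an explicit strictly positive integral kernel for $e^{-tL}$. For the hydrogen Hamiltonian $L=-\Delta-1/r$ on $\RR^3$, the negative spectrum is the discrete Bohr series $\{-1/(4n^2)\}$ accumulating at $0$, the essential spectrum is $[0,\infty)$, and $\lambda_m=-1/4$ is a simple isolated eigenvalue with ground state $\phi_m(r)=e^{-r/2}\in L^1\cap L^2$. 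Positivity preservation of $e^{-tL}$ follows from the Feynman--Kac formula (the Coulomb potential lies in the Kato class), and positivity improvement from the strict positivity of the Brownian transition kernel on $\RR^3$, so Lemma \ref{lemma:44} again applies.

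For IV, assume $L$ is $m$-special with $\sigma(L)\subset[0,\infty)$, so $L^s$ is defined by the functional calculus. The Bochner subordination formula
\[
e^{-tL^s} \ = \ \int_0^\infty e^{-uL}\, d\eta_{t,s}(u),
\]
where $\eta_{t,s}$ is the one-sided $s$-stable probability measure on $(0,\infty)$, transfers positivity preservation and positivity improvement from $\{e^{-uL}\}$ to $\{e^{-tL^s}\}$, since $\eta_{t,s}$ assigns positive mass to every nonempty open subset of $(0,\infty)$ and at least one $e^{-uL}$ in the integrand is positivity improving. The spectral mapping theorem yields $\sigma(L^s)=\{\lambda^s:\lambda\in\sigma(L)\}$ with the same eigenspaces, so $\lambda_m(L^s)=\lambda_m(L)^s$ is basic and remains isolated in $\sigma(L^s)$ whenever $\lambda_m(L)$ is isolated in $\sigma(L)$; the clause of (m-S) on $0\in\sigma(L^s)$ follows as well, since $\lambda^s=0$ forces $\lambda=0$.

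The principal obstacle is II: the Coulomb potential $1/r$ is unbounded, so Lemma \ref{lemma:Faris} does not apply off-the-shelf, and one must replace it by an unbounded Faris--Simon perturbation argument or, equivalently, by Feynman--Kac, to obtain the positivity improving property of $e^{-t(-\Delta-1/r)}$. All remaining verifications reduce to well-known facts about heat kernels, explicit ground states, and Bochner subordination.
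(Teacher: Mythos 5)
Your proposal is correct in all four cases and follows the same skeletal strategy as the paper (check (m-S), then obtain (m-PI) via Lemma \ref{lemma:44} by exhibiting a positivity improving semigroup), but the tools you choose for items I--III differ genuinely from the paper's. For I, the paper simply cites Arendt's handbook (\cite{Ar}, Sections 8.1--8.2) for the positivity improving property of the heat semigroups; your invocation of Hopf's lemma and the parabolic strong maximum principle is the elementary, self-contained route underlying that reference. For II, the divergence is more substantial: you identify correctly that Lemma \ref{lemma:Faris} cannot be used off-the-shelf because the Coulomb potential is unbounded, and you fill the gap with Feynman--Kac and the Kato-class membership of $1/r$. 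The paper instead invokes Theorem XIII.45 of Reed--Simon, which requires showing that the bounded truncations $L_0+V_n$ converge to $L$ (and $L-V_n$ to $L_0$) in the strong resolvent sense, reducing, via Theorem VIII.25, to the elementary estimate $\lim_{\epsilon\to 0}\int_{|x|\le\epsilon} u^2/|x|^2\,dx = 0$ for $u\in H^2(\RR^3)$. Both close the case; the paper's argument is more operator-theoretic and self-contained relative to its own toolbox, while yours imports probabilistic machinery that gives the strict positivity of the kernel directly. For III the paper merely says ``similar,'' whereas you short-circuit via Mehler's explicit kernel, which is cleaner. For IV both you and the paper use Bochner subordination (the paper cites Yosida's Section IX.11), and your remark that the subordinating measure $\eta_{t,s}$ charges every open subset of $(0,\infty)$ is precisely what makes the positivity improving property survive the integral. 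One small point of care in IV: the conclusion that $\lambda_m(L^s)=\lambda_m(L)^s$ is isolated relies on $\lambda\mapsto\lambda^s$ being a homeomorphism of $[\lambda_m(L),\infty)$, which requires $\lambda_m(L)\ge 0$; this is exactly why the statement restricts to \emph{positive} $m$-special operators, and you implicitly respect this, but it is worth saying explicitly.
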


\begin{proof}
Hypothesis (m-S) is familiar for all examples, we check (m-PI).
For (I), see Sections 8.1 and 8.2 of \cite{Ar}.
For (II), take $L_0 = -\Delta$ with $D = H^2(\RR^3)$ and define $L = L_0 + V$ for the potential $V = - 1/r$. We prove (m-PI) for $L$ using  Theorem XIII.45, vol. IV of \cite{RS}. Define the bounded truncations $V_n$ which coincide with $V$ for $|x| > 1/n$ and are zero otherwise. Set $q_n = V - V_n$.
Both $L_0$ and $L$ are bounded from below. Comparing quadratic forms, \[ L \le L_0 + V_n \quad \hbox{and} \quad L_0 \le L - V_n \ ,\]
so that $L_0 + V_n$ and $L - V_n$ are uniformly bounded from below. We are left with showing that $L_0 + V_n \to L$ and $L - V_n \to L_0$ in  the strong resolvent sense. By Theorem VIII.25, vol. I of \cite{RS} it suffices to show that, for a given $u \in  H^2$,
\[   q_n \ u \to 0  \quad \hbox{in} \ L^2(\RR^3) \ , \quad \hbox{i.e.} \quad \lim_{\epsilon \to 0} \ \int_{|x| \le \epsilon} \frac{u^2(x)}{|x|^2} \ dx \ = \ 0 \ , \]
which is true, since $H^2(\RR^3)$ consists of bounded, continuous functions. The proof of (III) is similar.
For (IV), use the arguments with Laplace transforms in Section IX.11 of \cite{Yo} (see also \cite{Tay}). \qed
\end{proof}

\medskip
An m-special operator $L: D \to H$ with $\lambda_m >0$ yields  m-special operators $L \otimes L,  L \wedge L, L \otimes I + I \otimes L $ in appropriate domains.

There are natural m-special operators associated to systems. For a
finite measure space $(M,d\nu)$, let $(\tilde M, d\nu)$ be the disjoint union of $k$ copies of  $(M,d\nu)$. Clearly, for $H= L^2(M, d\mu)$, we have $\tilde H = L^2 ( \tilde M,d\nu) \simeq H^k$.  Inequalities between vectors must hold componentwise.
We consider a simple example.

Let $L: D \to H$ be m-special, $\lambda_m(L) = \min \sigma(L)$ associated with  $\phi_m(L)>0$, $\mu_m(L) = \inf \sigma(L) \setminus \lambda_m(L)$.
Then, for $\alpha > 0$ with $-\lambda_m(L) < \alpha < \mu_m(L)$,
\[\tilde L:\tilde{D} = D\oplus D \mapsto \tilde{H} = H \oplus H , \ (u_1,u_2)\mapsto (L u_1-\alpha \  u_2,L u_2- \alpha \  u_1) \]
is m-special. We have $\tilde \lambda_m= \min \sigma(\tilde L)= \lambda_m(L) - \alpha$ and ground state  $\sqrt2 \phi_m=(\phi_m(L),\phi_m(L))$. Also, $\tilde \mu_m = \inf ( \sigma(\tilde L)\setminus \{\tilde \lambda_m\}) = \min \{ \lambda_m(L) + \alpha, \mu_m(L) - \alpha \} > 0$, from which  (m-S) follows;  $\alpha > 0 $ in turn implies (m-PI).

\subsubsection{Compatible maps $P$} \label{section:m-special}

From Proposition \ref{prop:mamenom}, we must find maps $P(u)$ admitting linearizations which are fine perturbations of an m-special operator $L$. We present two examples.

Consider maps $P(u)$ which are gradients of $\Psi: D \to \RR$, so that the Jacobian $DP$ is a Hessian, hence symmetric with appropriate smoothness. Let $\Omega \subset \RR^n$ bounded,  $H = L^2(\Omega, dx)$,  $B, A \in {\cal B}(H)$ and ${\bf f}$ a primitive of $f, {\bf f}'=f$, set
\[ \Psi : H \to \RR \ , \quad u \mapsto \int_\Omega B {\bf f}( A u) \ dx  \]
for which, at least formally (here, $1 \in H$ is the constant function),
\[ P(u) = \nabla \Psi(u) = A^T f(Au) (B^T 1) \ . \]
The usual Nemitskii map is the case $B = A = I$.

\begin{prop} Suppose $A \in \BH$ is positively stable, $g= B^T 1 >0$,  $f$ as in Theorem \ref{theo:Nemitskii2}, $P(u) =  A^T g f(Au)$. Then, if $\| A\|^2 \|gf'\|_\infty  \le b < \mu_m$, (m-H) and (m-Conv) hold and thus $F = L - P$ is a simple map.
\end{prop}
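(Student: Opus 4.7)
The plan is to verify hypotheses (m-H) and (m-Conv) of Theorem~\ref{theo:F2}(2), so that the simple-map conclusion follows at once. The chain rule applied to $P(u)=A^T(g\,f(Au))$ gives
\[ DP(u) = A^T\, M_{g\,f'(Au)}\, A, \]
where $M_h$ denotes pointwise multiplication by $h$. Averaging as in~(\ref{G2}) produces the linearization
\[ G(u,v) = A^T\, M_{g\, Q(u,v)}\, A, \qquad Q(u,v)(x) := q(Au(x),Av(x)), \]
with $q$ the Newton quotient of $f$ (set $q(r,r):=f'(r)$). Both operators are symmetric on $H$ since multiplication by a real-valued function is self-adjoint and $A^T$ is the Hilbert adjoint of $A$.

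For (m-H), the norm estimate is immediate: since strict convexity of $f$ gives $q(r,s)\in[\inf f',\sup f']$,
\[ \|G(u,v)\| \le \|A\|^2\,\|g\,Q(u,v)\|_\infty \le \|A\|^2\|g f'\|_\infty\le b<\mu_m. \]
For the fine-perturbation condition, the uniform $L^\infty$ bound on $Q$ lets one pick $p\in\RR$ with $g\,Q(u,v)+p\ge 0$ a.e., so $M_{gQ+p}$ is positivity preserving. Because $A$ is positively stable and $\Omega$ is bounded, $A$ is positivity preserving (approximate $u\ge 0$ by $u+\varepsilon$), and the Hilbert adjoint $A^T$ inherits this property by duality. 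Sandwiching then makes $A^T M_{gQ+p}A$ positivity preserving, from which $G(u,v)\in\HbC$ follows after a suitable further translation, giving (m-H).

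For (m-Conv), using $\langle \xi, A^T\eta\rangle=\langle A\xi,\eta\rangle$ and writing $\alpha=Au$, $\beta=Av$, $\gamma=Aw$, the expression
\[ \langle v-u, P(w)\rangle + \langle w-v, P(u)\rangle + \langle u-w, P(v)\rangle \]
reduces to
\[ \int_\Omega g(x)\,\bigl[(\beta-\alpha)(x)\, f(\gamma(x)) + (\gamma-\beta)(x)\, f(\alpha(x)) + (\alpha-\gamma)(x)\, f(\beta(x))\bigr]\,dx. \]
The bracketed integrand is the classical convex excess of $f$ at three ordered arguments, pointwise $\ge 0$ and strictly positive at those $x$ where $\alpha(x)<\beta(x)<\gamma(x)$. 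Combined with $g>0$, strict positivity of the whole integral reduces to showing that this three-way strict inequality holds on a set of positive measure.

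The main obstacle is this last measure-theoretic step. The hypotheses give only $v-u,w-v\in K\setminus\{0\}$, whereas positive stability requires $u(x)>0$ a.e.\ in order to conclude $Au(x)>0$ a.e. A natural remedy is to upgrade positive stability to positivity improving, which forces $A(v-u),A(w-v)$ strictly positive a.e.\ on $\Omega$; alternatively, one approximates $v-u$ and $w-v$ by strictly positive elements of $K$ and passes to the limit using Lipschitz continuity of $P$ (and of $f$). Once the strict inequality on a set of positive measure is secured, Theorem~\ref{theo:F2}(2) applies and $F=L-P$ is a simple map.
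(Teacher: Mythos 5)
Your linearization $G(u,v)=A^T M_{g\,Q(u,v)}A$, the bound $\|G(u,v)\|\le\|A\|^2\|gf'\|_\infty\le b<\mu_m$, and the three-point convex-excess identity are exactly the paper's argument, so the skeleton is right; the issues are the two points you leave open. The ``main obstacle'' you end on is not an obstacle. In Section 3 the relation $u<v$ means $v-u>0$, i.e.\ $v(x)-u(x)>0$ a.e., and this is precisely how (m-Conv) is consumed: in Proposition \ref{TR} the differences $v-u$ and $w-v$ of coincident preimages are nonzero multiples of the ground state of the m-special operator $L-G(u,v)$, hence strictly positive a.e.\ by (m-S). Positive stability of $A$ then gives $Au(x)<Av(x)<Aw(x)$ a.e., your bracketed integrand is strictly positive a.e., and $g>0$ finishes (m-Conv) by integration. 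No upgrade of $A$ to ``positivity improving'' is needed — the hypothesis ``positively stable'' is calibrated exactly to this step — and your fallback approximation would in any case only deliver the non-strict inequality in the limit.

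On (m-H), the norm estimate is fine, but the positivity half does not close as written. You establish that $A^T M_{gQ+p}A=G(u,v)+pA^TA$ is positivity preserving, whereas membership in $\HbC$ requires $G(u,v)+p'I$ to be positivity preserving, and no ``suitable further translation'' converts one into the other: $p'I-pA^TA$ need not be positivity preserving for any $p'$ (already fails when $A^TA$ is a rank-one projection onto a positive function, since $(p'I-pA^TA)u$ is negative off the support of $u$). If $\inf f'\ge 0$ the issue disappears: then $gQ\ge 0$, $G(u,v)$ is a composition of positivity preserving operators, and $p=0$ works. For sign-changing $f'$ a genuine argument is needed; the paper's own ``the hypotheses immediately yield (m-H)'' gives no more detail than you do, so you are in good company, but you should not present this step as a formality.
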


As $f$ is Lipschitz, $f'$ exists a.e. A finer result may be obtained assuming smoothness, as in Proposition \ref{properl}, but we give no details.

\begin{proof} Take for linearizations the maps $G(u,v) = A^T M_{g q(u,v)} A$, where, as usual, $M_{g q(u,v)} \in \BH$ is multiplication by $g q(u,v) \in L^\infty(\Omega)$ and $q$ is the usual Newton quotient associated with $f$. The hypotheses immediately yield (m-H).

We  consider (m-Conv). From the convexity of $f$, for $u < v < w$,
\[ (w-v) (f(v) - f(u)) < (v-u) (f(w) - f(v)) \ . \]
As $A$ is positively stable, $A u < A v < A w$ and
\[   (Aw-Av) g(f(Av) - f(Au)) < (Av-Au) g(f(Aw) - f(Av)) \]
pointwise (for all $x \in \Omega$). Integrate over $\Omega$,
\[  \langle w-v , A^T g(f(Av) - f(Au)) \rangle <  \langle v-u , A^T g(f(Aw) - f(Av)) \rangle ,\]
so that $\langle w-v , P(v) - P(u)  \rangle <  \langle v-u , P(w) - P(v) \rangle$.
\qed
\end{proof}

\medskip
We  consider invariant maps.
For an orthogonal projection $\pi: H \to H$, split
\[ H = H_k \oplus H  ,  \quad  \hbox{for } \ H_k = \Ker \pi, \ H  = \Ran \pi \ , \quad u = u_k + u  \ . \] Suppose $L: D \subset H \to H$ is an  invertible operator  commuting with $\pi$ and $P:H \to H$ is a map keeping $H $ invariant (as a subspace) for which $P(0) = 0$. There are two maps to consider: $F: D \to H, F(u) = Lu - \pi \circ P \circ \pi$ and its restriction $F_\pi: D \cap H  \to H $, which is necessarily of the form $F_\pi (u) = L u - P(u)$.

If the restriction $L_\pi: D \cap H  \to H $ is m-special and $P_\pi: H  \to H $ is $L$-compatible with $L $, Theorem \ref{theo:F2} applies to $F$. We now consider $F$: in order to solve $F(u) = g$, decompose the equation as
\[  L u  - P(u ) = g  , \quad L u_k = g_k \]
and the invertibility of $L$ implies that $F = F \oplus L: H  \oplus H_k \to H  \oplus H_k$, so that the implications of Theorem \ref{theo:F2} and Proposition \ref{prop:properness} are common to $F$ and $F_\pi$.

As a simple example, let $\pi$ be the radial projection on functions defined in a ball, $L = -\Delta$ with Dirichlet conditions and $P$ a Nemitskii map as in Theorem \ref{AP}.

\section{Appendix: Eigenvectors on cones, after Marek}

We will use a standard version of the Krein-Rutman theorem (\cite{De}).

\begin{theo*}[Krein-Rutman] Let $K$ be a reproducing cone of a Banach space $Y$, $S$ a compact operator for which $SK \subset K$ and $r(S) >0$. Then $\lambda (S) = r(S)$ is an eigenvalue associated with an eigenvector $\phi \in K$.
\end{theo*}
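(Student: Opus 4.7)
The plan is to produce an eigenvector associated with $r(S)$ by examining how the resolvent $R_\lambda = (\lambda I - S)^{-1}$ behaves as $\lambda$ approaches $r(S)$ from above. First I would complexify if necessary so that the standard fact $r(S) \in \sigma(S)$ is available, and note that for $\lambda > r(S)$ the Neumann series $R_\lambda = \sum_{n \ge 0} S^n / \lambda^{n+1}$ converges in operator norm; since each $S^n$ satisfies $S^n K \subset K$ and $K$ is closed under scaling, addition, and limits, one gets $R_\lambda K \subset K$.

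Next I would fix any $u_0 \in K \setminus\{0\}$ and set $c(\lambda) = \|R_\lambda u_0\|$, with the aim of showing $c(\lambda) \to \infty$ as $\lambda \downarrow r(S)$. Suppose for contradiction that some sequence $\lambda_n \downarrow r(S)$ had $c(\lambda_n)$ bounded. By the reproducing property of $K$, every $v \in Y$ admits a decomposition $v = k_1 - k_2$ with $k_i \in K$, and (using that a reproducing cone in a Banach space is automatically unflattened, cf.\ the discussion of the Krein--\v{S}mulian argument in Section~\ref{Cones}) one can choose $|k_i| \le M |v|$. Applying $R_{\lambda_n}$ and using $R_{\lambda_n} k_i \in K$ together with the normality of $K$ would yield a uniform bound $\|R_{\lambda_n}\| \le C$. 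But a uniformly bounded resolvent on a sequence $\lambda_n \downarrow r(S)$ would force $r(S) \in \rho(S)$, contradicting $r(S) \in \sigma(S)$. (A minor adjustment may be needed: one should check that one can choose a single $u_0 \in K$ whose resolvent norm blows up; this follows from the uniform boundedness principle applied to $\{R_{\lambda_n}|_K\}$ together with the reproducing decomposition.)

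Having secured $c(\lambda) \to \infty$, set $\phi_\lambda = R_\lambda u_0 / c(\lambda) \in K$ with $\|\phi_\lambda\| = 1$. From the resolvent identity $(\lambda I - S) R_\lambda u_0 = u_0$ one gets
\[
S \phi_\lambda = \lambda \phi_\lambda - \frac{u_0}{c(\lambda)}.
\]
Now I would use compactness of $S$: along a sequence $\lambda_n \downarrow r(S)$, $S \phi_{\lambda_n}$ has a norm-convergent subsequence, say $S \phi_{\lambda_n} \to \psi \in Y$. The displayed identity then forces $\lambda_n \phi_{\lambda_n} \to \psi$ (since $u_0/c(\lambda_n) \to 0$), so $\phi_{\lambda_n} \to \psi / r(S)$. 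Setting $\phi = \psi / r(S)$, passage to the limit in the identity yields $S \phi = r(S) \phi$, while $\phi_\lambda \in K$ and $K$ closed give $\phi \in K$; since $\|\phi\| = 1$, $\phi \ne 0$.

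The main obstacle is the blow-up step: one must rule out that $R_\lambda$ stays tame on $K$ along some sequence $\lambda_n \downarrow r(S)$. The conceptual point is that positivity of $R_\lambda$ on the reproducing cone controls its norm on the whole space, and this in turn is incompatible with $r(S)$ being a boundary point of the spectrum. Once that is in place, the rest is a routine compactness-plus-normalization argument.
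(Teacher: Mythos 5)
Your argument is correct, and it is essentially the standard proof of this version of the Krein--Rutman theorem (it is the one given in Deimling's book, which is exactly what the paper cites here: the paper states this theorem as a known result and does not prove it, so there is no ``paper's proof'' to compare against). The three ingredients --- positivity of the resolvent via the Neumann series, blow-up of $\|R_\lambda u_0\|$ for some $u_0\in K$ as $\lambda\downarrow r(S)$, and the normalization-plus-compactness passage to the limit --- are all in place and correctly assembled.

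Two small remarks on the blow-up step. First, as you note yourself, for an \emph{arbitrary} $u_0\in K\setminus\{0\}$ the quantity $c(\lambda)$ need not blow up; the correct logical order is the one in your parenthetical: assume $\sup_n\|R_{\lambda_n}u\|<\infty$ for \emph{every} $u\in K$, use the unflattened decomposition $v=k_1-k_2$ with $|k_i|\le M|v|$ and the triangle inequality to get pointwise boundedness on all of $Y$, apply the uniform boundedness principle to get $\sup_n\|R_{\lambda_n}\|<\infty$, and contradict $\|R_\lambda\|\ge 1/\operatorname{dist}(\lambda,\sigma(S))$ with $r(S)\in\sigma(S)$. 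Second, normality of $K$ plays no role in that estimate (the triangle inequality suffices), and indeed normality is not among the hypotheses of the statement; what you actually need is the quantitative form of the reproducing property (unflattenedness), which the paper's Section~\ref{Cones} supplies via the Krein--\v{S}mulian theorem. With those points made explicit, the proof is complete.
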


Theorem \ref{MSS} below  implies Theorem \ref{theo:KRnointerior} in Section \ref{Cones}.
Theorem \ref{MSS} is a subset of the statements in Theorems 2.1, 2.2 and 2.3 in \cite{Mk}, inspired by ideas in \cite{K}, \cite{Bonsall}, \cite{Sa},  \cite{Sc} and \cite{Sc2}.
Nonsupporting operators and $B$-cones will not be used in this text, and are defined in \cite{Mk}, \cite{Sc2}.

\begin{theo} \label{MSS} [Bonsall-Marek-Sawashima-Schaefer] \label{Marektrue} Let $Y$ be a real Banach space and $K \subset Y$ be a normal B-cone of $Y$. Let $S \in \BY$ be a positive, nonsupporting operator and suppose that the spectral radius $r(S)$ is a pole of the resolvent map $\lambda \mapsto R(\lambda,S) = (\lambda I - S)^{-1}$.
Then the following properties hold.
\begin{enumerate}
\item [(i)]There exists an eigenvector $\phi \in K$ associated with $r(S)$, $\phi >0$.
\item [(ii)]The spectral radius $r(S)$ is a simple pole of $R(\lambda,S)$.
\item [(iii)]Every eigenvector of $S$ in $K$ is a multiple of $\phi$.
\item [(iv)]For an eigenvalue $\lambda \in \sigma(S)$ different from $r(S)$, $|\lambda| < r(S)$.
\item [(v)]There is an eigenvector $\phi^\ast \in K^\ast$ of $S^\ast$ associated with $r(S)$, $\phi^\ast >0$.
\item [(vi)]If $\mu > r(S)$, then $(\mu I - S)^{-1}\in \BY$ and $(\mu I - S)^{-1} K \subset K$.
\end{enumerate}
\end{theo}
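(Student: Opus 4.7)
The plan is to deduce all six items from the Laurent expansion of the resolvent about $r(S)$, combined with positivity and the nonsupporting hypothesis. Writing $R(\lambda,S)=\sum_{n=-k}^{\infty}(\lambda-r(S))^{n}B_{n}$ for $k$ the order of the pole, item (vi) is immediate: for $\mu>r(S)$ the Neumann series $R(\mu,S)=\sum_{n\ge 0}\mu^{-n-1}S^{n}$ converges in $\BY$, and each summand preserves $K$, so $R(\mu,S)K\subset K$. Letting $\lambda\to r(S)^{+}$ along the real axis, the leading coefficient $B_{-k}$ inherits positivity, $B_{-k}K\subset K$, and its range sits in $\Ker(S-r(S)I)$, a built-in source of positive eigenvectors.

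For item (i), pick any $k_{0}\in K\setminus\{0\}$ with $B_{-k}k_{0}\ne 0$ (such $k_{0}$ exists because $B_{-k}\ne 0$ and $K$ is generating). Then $\phi=B_{-k}k_{0}\in K$ is a nonzero eigenvector for $r(S)$. The quasi-interior property $\phi>0$ comes from the nonsupporting hypothesis: some iterate $S^{n}k_{0}$ is quasi-interior, and $S$-invariance of the eigenspace with $S\phi=r(S)\phi$ propagates this to $\phi$. Item (v) follows by applying the same argument to $S^{\ast}$ on $Y^{\ast}$ with dual cone $K^{\ast}$: by the Krein result quoted in Section \ref{Cones}, $K^{\ast}$ is again a normal B-cone, and $S^{\ast}$ inherits positivity, the nonsupporting property, and the pole structure at $r(S^{\ast})=r(S)$.

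Items (ii) and (iii) are now standard. For (ii), a pole of order $k\ge 2$ would produce $\psi$ with $(S-r(S)I)\psi=\phi$, and pairing with $\phi^{\ast}$ from (v) gives $0=\langle\phi^{\ast},(S-r(S)I)\psi\rangle=\langle\phi^{\ast},\phi\rangle>0$ since $\phi$ is quasi-interior and $\phi^{\ast}$ strictly positive, a contradiction. For (iii), let $\psi\in K\setminus\{0\}$ be another eigenvector for $r(S)$; normality of $K$ makes $t_{\ast}=\sup\{t>0:\phi-t\psi\in K\}$ finite. If $\phi-t_{\ast}\psi\ne 0$, it is a nonzero element of $K$ fixed (up to scale) by $r(S)^{-n}S^{n}$, so the nonsupporting property forces it to be quasi-interior, whence a small additional multiple of $\psi$ can be subtracted while remaining in $K$, contradicting maximality of $t_{\ast}$. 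Hence $\phi=t_{\ast}\psi$.

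The main obstacle is item (iv), the strict dominance $|\lambda|<r(S)$ for $\lambda\in\sigma(S)\setminus\{r(S)\}$, i.e. the exclusion of peripheral spectrum. The argument in \cite{Mk} combines the nonsupporting hypothesis with a majorisation $|Sy|\le S|y|$ for an eigenvector $Sy=\lambda y$ with $|\lambda|=r(S)$: iterating $S$ and pairing with $\phi^{\ast}$, strict inequality somewhere (produced by the nonsupporting property) propagates through $\langle\phi^{\ast},\cdot\rangle$ and forces $|\lambda|<r(S)$. This is the technically most delicate step and is where a full proof must lean substantively on the precise definition of nonsupporting; the remaining items follow the familiar Krein--Rutman template once a quasi-interior eigenvector and its dual counterpart are in hand.
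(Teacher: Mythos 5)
The paper does not prove this theorem: the text immediately following the statement says that Theorem~\ref{MSS} ``is a subset of the statements in Theorems 2.1, 2.2 and 2.3 in \cite{Mk}'' and the Appendix only \emph{uses} it to derive Theorem~\ref{theo:KRnointerior}. There is therefore no proof in the paper to compare against, and what follows is a critique of your proposal on its own terms.

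Your treatment of (i), (ii), (v), (vi) via the Laurent expansion of the resolvent and the positivity of the leading coefficient $B_{-k}$ is the standard route and is essentially sound; one small repair in (i): the quasi-interiority of $\phi = B_{-k}k_0$ should be deduced from the nonsupporting hypothesis applied directly to the eigenvector relation $S^n\phi = r(S)^n\phi$, not from some iterate $S^n k_0$ as you wrote, since $\phi$ is $B_{-k}k_0$ and not an $S$-iterate of $k_0$.

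The genuine gap is in (iii). Your argument sets $t_\ast = \sup\{\,t>0 : \phi - t\psi \in K\,\}$ and, upon reaching a quasi-interior residue, concludes that ``a small additional multiple of $\psi$ can be subtracted while remaining in $K$.'' That inference needs $\phi - t_\ast\psi$ to be an \emph{interior} point of $K$, not merely quasi-interior, and the two notions are very different when $K$ has empty interior. For the cone of a.e.~nonnegative functions in $L^2(0,1)$, $\phi(x)=x$ is quasi-interior, yet $\phi - t\cdot 1 \notin K$ for every $t>0$; the set $\{t>0 : \phi - t\psi \in K\}$ may be empty and $t_\ast$ is then not even defined in your sense. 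Since the entire motivation of this Marek--Schaefer--Sawashima--Bonsall extension, as the paper stresses, is precisely to handle cones with empty interior (where the classical Krein--Rutman interior-point maximality trick fails), this is the one step that cannot be ``standard'' and must instead lean on the precise definition of \emph{nonsupporting} and on duality with $\phi^\ast$, as done in \cite{Mk}. You also concede that (iv), the exclusion of peripheral spectrum, is only sketched. Both (iii) and (iv) therefore remain open in your proposal, and both require engaging seriously with the definitions of nonsupporting operator and B-cone that the paper deliberately does not reproduce.
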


We prove Theorem \ref{theo:KRnointerior} from this theorem.
In Section \ref{Cones}, $Y$ is a real Banach space, $K \subset Y$ is a normal, reproducing cone.
A reproducing cone is unflattened (from the Krein-Smulian theorem, \cite{Vu}) and unflattened cones are B-cones (\cite{Sc2}). Also,  $S \in\BY $ is compact and ergodic with $r(S) > 0$. Ergodicity implies that $S$ is positive, nonsupporting. From the Krein-Rutman theorem, $r(S) >0$ and the spectral theory of compact operators implies that $r(S)$ is a pole of the resolvent map. Hence, the hypotheses of Theorem \ref{theo:KRnointerior} imply those of Theorem \ref{MSS}.

We show that $r$ is a basic eigenvalue of $S$.
From (1) and (5), $S$ and $S^\ast$ have eigenvectors $\phi_m >0$ and $\phi_m^\ast > 0$ associated with $r(S)$: this is property (b-1).
By the compactness of $S$ and $S^\ast$, the invariant subspaces $V_{r}$ and $V ^\ast$ associated to $r(S)$ are finite dimensional and it is easy to see that $\dim V  = \dim V ^\ast$. The Jordan theorem for matrices applied to the restriction of $S$ in $V $ implies that the resolvent map $\lambda \to (S - \lambda I)^{-1}$ has a pole at $r(S)$ of multiplicity given by the dimension of $V $. By (2), then, $V $ is one dimensional. This proves properties (b-2) and (b-3): $r(S)$ is a basic eigenvalue of $S$. The remaining claims of Theorem \ref{theo:KRnointerior} are explicit consequences of Theorem \ref{MSS}.

\bigskip
\noindent
Marta Calanchi, Dipartimento di Matematica, Universit\`a degli Studi di Milano, Via Saldini, 50 20147 Milano, Italy. \hfill marta.calanchi@unimi.it

\bigskip
\noindent
Carlos Tomei, Departamento de Matemática, PUC-Rio R. Mq.  S. Vicente 225,
Rio de Janeiro 22451-900,  Brazil. \hfill carlos.tomei@gmail.com

\end{document}